\newcommand{\RR}{\mathbb{R}}
\newcommand{\DD}{\mathbb{D}}
\newcommand{\CC}{\mathbb{C}}
\newcommand{\NN}{\mathbb{N}}
\newcommand{\SSS}{\mathbb{S}}
\newcommand{\CP}{\mathcal{P}}
\newcommand{\CK}{\mathcal{K}}
\newcommand{\TT}{\mathbb{T}}
\newcommand{\ZZ}{\mathbb{Z}}
\newcommand{\ve}{\varepsilon}
\newcommand{\CB}{\mathcal{B}}
\newcommand{\CDD}{\mathcal{D}}
\newcommand{\CF}{\mathcal{F}}
\newcommand{\CA}{\mathcal{A}}
\newcommand{\CU}{\mathcal{U}}
\newcommand{\CL}{\mathcal{L}}
\newcommand{\CN}{\mathcal{N}}
\newcommand{\La}{\Lambda}
\newcommand{\la}{\lambda}
\newcommand{\Si}{\Sigma}
\newcommand{\Om}{\Omega}
\newcommand{\om}{\omega}
\newcommand{\de}{\delta}
\newcommand{\Ga}{\Gamma}
\newcommand{\ga}{\gamma}
\newcommand{\al}{\alpha}
\newcommand{\Jac}{\mathrm{Jac}}
\newcommand{\p}{\partial}
\newcommand{\Id}{\mathrm{Id}}
\newcommand{\Sp}{\mathrm{Sp}}
\newcommand{\hF}{\widehat{F}}
\newcommand{\wht}{\widehat}
\newcommand{\lan}{\langle}
\newcommand{\ran}{\rangle}
\newcommand{\hb}{\hbar}
\newcommand{\bi}{\mathbf{i}}
\newcommand{\bzero}{\mathbf{0}}
\newcommand{\CW}{\mathcal{W}}
\newcommand{\wF}{\widehat{F}}
\newcommand{\wtF}{\widetilde{F}}
\newcommand{\Spec}{\mathrm{Spec}}
\newcommand{\opsi}{\overline{\psi}}
\newcommand{\bn}{\mathbf{n}}
\newcommand{\OP}{\mathrm{OP}}
\newcommand{\Op}{\mathrm{Op}}
\newcommand{\wP}{\widetilde{P}}
\newcommand{\wtp}{\widetilde{p}}
\newcommand{\wQ}{\widetilde{Q}}
\newcommand{\oa}{\overline{a}}
\newcommand{\wW}{\widetilde{W}}
\newcommand{\wa}{\widetilde{a}}
\newcommand{\wtr}{\widetilde{r}}
\newcommand{\ldb}{\ldbrack}
\newcommand{\rdb}{\rdbrack}
\newcommand{\lbnur}{\ldbrack{\bm{\nu}}\rdbrack}
\newcommand{\bnu}{{\bm{\nu}}}
\newcommand{\fB}{\mathfrak{B}}
\newcommand{\CT}{\mathcal{T}}
\newcommand{\mmod}{\mathrm{mod}}
\newcommand{\Const}{\mathrm{Const}}
\newcommand{\SU}{\mathrm{SU}}
\newtheorem{theoremmain}{Theorem}
\newtheorem{theorem}{Theorem}[section]
\newtheorem{prop}[theorem]{Proposition}
\newtheorem{lemma}[theorem]{Lemma}
\newtheorem{sublemma}[theorem]{Sublemma}
\newtheorem{cor}[theorem]{Corollary}
\newtheorem{remark}[theorem]{Remark}
\newtheorem{definition}[theorem]{Definition}
\numberwithin{equation}{section}
\newtheorem{claim}{Claim}
\def\disp{\displaystyle}
\def\id{\mathop{\hbox{\rm id}}}
\title[]
{The Spectral Gap for Transfer Operators of Torus Extensions over Expanding Maps}
\author{Jianyu Chen}
\address{Department of Mathematics and Statistics, 
University of Massachusetts Amherst,
Amherst, MA 01003, USA}
\email{jchen@math.umass.edu}
\author{Huyi Hu}
\address{Department of Mathematics, Michigan State University, 
East Lansing, MI 48824, USA}
\email{hu@math.msu.edu}
\begin{document}

\begin{abstract} 
We study the spectral gap for transfer operators
of the skew product 
$F: \TT^d\times \TT^\ell\to \TT^d\times \TT^\ell$ given by 
$F(x,y)=(Tx, y+\tau(x) \pmod{\ZZ^\ell})$, 
where $T: \TT^d\to \TT^d$ is a $C^\infty$ uniformly expanding endomorphism, 
and the fiber map $\tau: \TT^d\to \RR^\ell$ is a $C^\infty$ map. 
We construct a Hilbert space $\CW^{-s}$ for any $s<0$,
which contains  
all the H\"older functions of H\"older exponents $|s|$ on $\TT^d\times \TT^\ell$. 
Applying the method of semiclassical analysis, we obtain the dichotomy: 
either the transfer operator has a spectral gap  on $\CW^{-s}$,
or $\tau$ is an essential coboundary.
In the former case, $F$ mixes exponentially fast for H\"older 
observables with H\"older exponents $|s|$; 
and in the latter case, either
$F$ is not weak mixing and it is semiconjugate to a circle rotation, 
or $F$ is unstably mixing, i.e., it can be approximated by non-mixing skew products.
\end{abstract}

\maketitle


\section{Introduction.}
In this paper, we study the spectral gap property for transfer operators
of torus extensions over expanding maps.  
The systems $F: \TT^d\times \TT^\ell\to \TT^d\times \TT^\ell$ 
that we consider are of the form $F(x,y)=(Tx, y+\tau(x) \pmod{\ZZ^\ell})$, which 
are skew products with expanding $T: \TT^d\to \TT^d$ on the base 
and torus rotations on the fibers $\TT^\ell$
with rotation vectors $\tau(x)\in \RR^\ell$ for any $x\in \TT^d$.
We obtain the following dichotomy: either the transfer operator of $F$ 
has a spectral gap on a certain Hilbert space
 and therefore the system has exponential decay 
of correlations with respect to the smooth invariant measure, 
or the rotation function $\tau(x)$ over $\TT^d$ is an essential coboundary.
When the base map $T$ is fixed, the former case is open and dense in the $C^0$ topology of the space of skew products.
The latter implies that either the system is not weak mixing 
and is semiconjugate to an expanding 
endomorphism crossing a circle rotation, 
or it is unstably mixing and can be approximated by non-mixing skew products.

A Hilbert space, denoted by $\CW^{-s}$ for any $s<0$, is constructed using
the Sobolov spaces, which is contained in the distribution space and
contains all H\"older functions of H\"older exponents $|s|$ 
over the phase space $\TT^d\times \TT^\ell$ of $F$ (see Remark~\ref{Rspace}).  
It is well known that restricted to $L^2(\TT^{d+\ell})$,
the transfer operator  does not have spectral gap.
We define the Hilbert space $\CW^{-s}$ such that its norm is stronger along $\TT^d$-direction 
and weaker along $\TT^\ell$-direction.

The method we use to get the spectral gap and thus
the exponential mixing property is the semiclassical analysis. 
Instead of the Ruelle-Perron-Frobenius transfer operator, 
we use dual, the Koopman operator $\wF$, acting on 
the dual space $\CW^s$ of the Hilbert space $\CW^{-s}$.
By Fourier transform along $\TT^\ell$, the fiber direction, 
the operator $\wF$ can be decomposed to 
a family of operators $\{\wF_\bnu\}_{\bnu\in \ZZ^\ell}$, where $\bnu$ 
is the frequency.
Such operators can be regarded as Fourier integral operators.  
Using semiclassical analysis we show that if 
$\tau$ is not an essential coboundary, 
then the spectral radius of $\wF_\bnu$ is strictly less than $1$ for all $\bnu\not=\bzero$,
while $1$ is the only leading eigenvalue of $\wF_\bzero$ on the unit circle and it is simple. 
Moreover, Proposition~\ref{Pprop1} shows that the essential spectral radius of every $\wF_\bnu$
is uniformly bounded by a positive number $\rho_1\in (0, 1)$.
Further, we obtain in Proposition~\ref{Pprop2} that for all $\bnu$ of sufficiently large magnitude,
the operator norms of $\{\wF_\bnu^n\}_{n\in\NN}$ decay exponentially fast (in terms of $n$) 
with a uniform rate $\rho_2\in (0, 1)$.
We hence prove that the Koopman operator $\wF$ has a spectral gap on $\CW^{s}$, 
and so does the transfer operator on $\CW^{-s}$.  Therefore 
the system $F$ has exponential decay of correlations.

By analyzing the transfer operators, Dolgopyat established in \cite{MR1919377} 
the exponential mixing property for compact Lie group 
extensions of expanding maps under a generic condition called 
infinitesimally completely non-integrability, which
is equivalent to the non-coboundary condition of $\tau(x)$ if the group is a torus. 
The crucial technique used there is now called Dolgopyat's oscillatory cancellation argument,
and it has been successfully developed to study the rate of mixing for various systems with neutral direction, see \cite{MR1626749}, \cite{MR2113022},
\cite{MR2264836}, \cite{MR2964773}, etc.
Among all such results,  in \cite{Butterley-Eslami} Butterley and Eslami 
studied piecewise $C^2$ circle extensions $F$ and obtained the dichotomy
that either $F$ mixes exponentially, or $\tau$ is cohomologous to 
a piecewise constant.
We remark that their analysis did not provide a finer spectral structure of 
the transfer operators.\footnote{
\cite{Butterley-Eslami} only obtained non-uniform norm estimates for twisted transfer operators 
over the base map $T$, which is sufficient to establish exponential mixing of $F$ but not enough to prove
the spectral gap for the transfer operator associated to $F$.
}

In a somewhat different direction, the semiclassical analysis approach is used to study Ruelle-Pollicott resonances for some hyperbolic systems, see \cite{MR2285729}, \cite{MR2229997}, \cite{MR2461513}, \cite{MR3072166}, \cite{MR3729047}, \cite{MR3330427}, \cite{DyatGu16}, \cite{DyatZw16}, \cite{BonWeich18}, etc. 
Applying this approach in the context of partially hyperbolic systems,
Faure showed in \cite{MR2785978} that
a simple but intuitive model -- a circle 
extension of a circle expanding map -- has exponential decay of correlations under a so-called 
partially captive condition. 
It was recently pointed out in \cite{NTW15} that the partially captive condition 
is generic but much stronger than the non-coboundary condition of $\tau(x)$ for this two-dimensional model.
Using similar techniques, Arnoldi established 
in \cite{MR2924730} the asymptotic spectral gap and the fractal Weyl law for 
$\SU(2)$ extensions of circle expanding maps under the partially captive condition, and later Arnordi, Faure and 
Weich obtained a similar result in \cite{AFW13} for circle extensions of 
certain one-dimensional open expanding maps under a stronger condition called minimal captivity.
An improved estimate of the spectral gap was recently obtained by Faure and Weich 
\cite{MR3719542} for the model in \cite{AFW13}. 

Let us mention some similar results in the context of suspension semi-flows 
over expanding maps. Pollicott \cite{MR1685406}  
showed that a generic suspension semi-flow over an expanding Markov interval map
is exponentially mixing. In the case when the base is a linear expanding map,
Tsujii \cite{MR2380311} 
constructed an anisotropic Sobolev space
on which the transfer operator has spectral gap. 
Also, Baladi and Vall\'ee \cite{MR2113938} proved exponential mixing property for surface semi-flows without finite Markov partitions.
We also point out that the construction of anisotropic distributional space for 
hyperbolic diffeomorphisms, contact Anosov flows and other hyperbolic systems 
has been developed during the last two decades, see e.g. 
\cite{MR2313087}, \cite{MR2427585},  \cite{MR2652469},
\cite{MR2643889}, \cite{MR2964773}, \cite{MR3742756}, etc.

The main technique we use in this paper is the semiclassical 
analysis, inspired by Faure \cite{MR2785978} and other related works. 
A key ingredient in our analysis is that we introduce non-standard Sobolev spaces associated with dynamical weights. 
Although equivalent to the standard ones, these spaces are much more effective in extracting
the spectral properties of the Koopman operator $\hF$ and its decompositions $\{\hF_\bnu\}_{\bnu\in \ZZ^\ell}$. 
In fact, we convert each $\hF_\bnu^n$ by unitary conjugation into a new operator $\wQ_{\bnu, n}$
such that $\wQ_{\bnu, n}^*\wQ_{\bnu, n}$ is a pseudo-differential operator, 
whose symbol provides an upper bound for the operator norm of $\hF_\bnu^n$. 
Moreover, we prove directly that 
the rotation vector $\tau(x)$ is not an essential coboundary if and only if those upper bounds vanish uniformly exponentially fast as $n\to \infty$ for all high frequencies $\bnu$, from which we conclude that $\hF$ has spectral gap.
We remark that our approach bypasses the captive conditions, and has no dimension restrictions to either the base or the fiber.

This paper is organized as the following.  
The setting and statements of results are given in Section~1.
In Section~2 we introduce some notions and results from classical and
semiclassical analysis,
including Fourier transform, Sobolev spaces, Pseudo-differential operators,
Fourier Integral Operators, Egorov's Theorem, and $L^2$-continuity theorems.
This section is not necessary for the reader who is familar with the theory.
We prove the theorems of the paper in Section~3 based on
Proposition~\ref{Pprop1} and ~\ref{Pprop2}, which provides the spectral properties 
of the Koopman operator and its decompositions.
These two propositions are proved in Section~4, using the classical and semiclassical 
analysis.  A key estimates in the proof, stated in Lemma~\ref{Lmain estmt}, 
is postponed in Section~5.\\

\smallskip

Acknowledgement:  
We would like to thank Sheldon Newhouse, Zhenqi (Jenny) Wang and Zhengfang Zhou for their useful suggestions. 
We would also like to thank the anonymous referee for helpful comments.



\section{Statement of results.}
Let $\TT=\RR/\ZZ$, and
let $T: \TT^d\to \TT^d$ be a $C^\infty$ uniformly expanding map such that
\begin{equation}\label{fdef gamma}
\ga :=\inf_{(x, v)\in S\TT^d} |D_xT(v)|>1,
\end{equation}
where $S\TT^d$ is the unit tangent bundle over $\TT^d$. 
It is well known that $T$ has a unique smooth invariant probability measure 
$d\mu(x)=h(x)dx$, where the density function $h\in C^\infty(\TT^d, \RR^+)$.
Further, $T$ is mixing with respect to $\mu$. Here and throughout this paper, 
we fix the expanding map $T$.

Given a function $\tau\in C^\infty(\TT^d, \RR^\ell)$, we define 
the skew product $F: \TT^d\times \TT^\ell \to \TT^d\times \TT^\ell$ by
\begin{equation}\label{fdefF}
F \begin{pmatrix} x \\ y\end{pmatrix}= \begin{pmatrix} Tx 
    \ \ \ \ \ \ \ \ \ \ \ \ \ \ \ \   \\ 
     y+\tau(x)  \pmod{\ZZ^\ell}\end{pmatrix},
\end{equation}
which preserves the product measure $dA=d\mu(x) dy$. 
We also denote the skew product by $F_\tau$ when we aim to emphasize the 
rotation function $\tau$.

\begin{definition}
A real-valued function $\varphi\in C^\infty(\TT^d, \RR) $ 
is called an \emph{essential coboundary} over $T$ if there exist $c\in\RR$ and a measurable function 
$u: \TT^d\to \RR$ such that 
\begin{equation*}\label{fdefcobd}
\varphi(x)=c+u(x)-u(Tx), \ \ \mu-a.e. \ x.
\end{equation*}
Let $\fB$ be the space of real-valued essential coboundries over $T$.

A vector-valued function $\tau=(\tau_1, \tau_2, \dots, \tau_\ell)\in C^\infty(\TT^d, \RR^\ell) $ 
is called an \emph{essential coboundary} over $T$ if $\tau_1, \tau_2, \dots, \tau_\ell$ are linearly dependent $\mmod$ $\fB$, that is,
there exist 
$v\in\RR^\ell\backslash\{\bzero\}$, $c\in\RR$ and a measurable function 
$u: \TT^d\to \RR$ such that 
\begin{equation}\label{fdefcobd}
v\cdot \tau(x)=c+u(x)-u(Tx), \ \ \mu-a.e. \ x. \ \footnote{\ 
Here ``$\cdot$" denotes the standard inner product of two vectors in  $\RR^\ell$. In the rest of the paper, we shall abuse the notation $v\cdot w$ when one of $v$ and  $w$ belongs to $\ZZ^\ell$ or $\SSS^{\ell-1}$ - the unit sphere in $\RR^\ell$; that is, $v\cdot w$ represents the inner product of $v$ and $w$ as vectors in $\RR^\ell$. 
}
\end{equation}
\end{definition}

\begin{remark} $ $

{\rm (i)}
By Livsic theory (see e.g. \cite{MR1891682}), 
the measurable function $u:\TT^d\to \RR$ in \eqref{fdefcobd} is in fact of class $C^\infty$.

{\rm (ii)} 
The functions $\tau_1, \tau_2, \dots, \tau_\ell$ are called integrally dependent $\mmod$ $\fB$ if \eqref{fdefcobd} holds for some $v\in \ZZ^\ell\backslash\{\bzero\}$.
There are functions $\tau_1, \tau_2, \dots, \tau_\ell$ that are linearly dependent but not integrally dependent $\mmod$ $\fB$, unless $\ell=1$.
\end{remark}

Let $\CDD(\TT^{d+\ell})=C^\infty(\TT^{d+\ell})$.  
Its dual space $\CDD'(\TT^{d+\ell})$ is the space of distributions on 
$\TT^{d+\ell}$. 

The \emph{Koopman operator} 
$\wF: \CDD(\TT^{d+\ell})\to \CDD(\TT^{d+\ell})$ for $F$ is defined by 
$\wF\phi=\phi\circ F$.  
The \emph{(Ruelle-Perron-Frobenius) transfer operator} 
$\wF': \CDD'(\TT^{d+\ell})\to \CDD'(\TT^{d+\ell})$ is the dual operator of $\wF$,
defined by the duality 
\begin{equation}\label{fdualop}
(\wF' \psi)(\phi)=\psi(\wF\phi) \quad \text{for any}\  
\phi\in \CDD(\TT^{d+\ell}), \ \ \psi\in \CDD'(\TT^{d+\ell}).
\end{equation}

\begin{definition}\label{def spec gap}
We say that a linear operator $\CT: \CB\to \CB$ of a Banach space $\CB$
has a \emph{spectral gap} if its spectrum 
\begin{equation*}
\mathrm{Spec}(\CT)=\{1\}\cup \CK,
\end{equation*}
where $1$ is a simple eigenvalue and
$\CK$ is a compact subset of the unit open disk $\DD=\{z\in \CC: |z|<1\}$. 
\end{definition}


Our main result is the following.  

\begin{theoremmain}\label{ThmSpec gap transfer} 
Let $(\TT^{d+\ell}, F, dA)$ be the skew product as described above. 
Then the following dichotomy holds:
\begin{enumerate}
\item Either $\tau(x)$ is an essential coboundary over $T$; 
\item or for any $s <0$, there is an $\wF'$-invariant Hilbert space 
$\CW^{-s}$, which is contained in $\CDD'(\TT^{d+\ell})$ and contains $C^{-s}(\TT^{d+\ell})$, 
such that $\wF'|\CW^{-s}$ has a spectral gap.
\end{enumerate}
\end{theoremmain}

\begin{remark}
It is well known that $\wF'|L^2(\TT^{d+\ell})$ does not have spectral gap.
Statement (2) of Theorem~\ref{ThmSpec gap transfer} holds for 
some Hilbert space $\CW^{-s}$ whose norm 
is stronger along $\TT^d$-direction and weaker along 
$\TT^\ell$-direction than that of $L^2(\TT^{d+\ell})$ 
(see Definition~\ref{fdef space transfer} and Remark~\ref{Rspace}). 
\end{remark}

Since the spectrum of a linear operator on a Hilbert space
coincides with that of its dual operator on the dual space, 
we will instead prove the spectral gap for 
the Koopman operator in the latter case of the above dichotomy.
That is, 

\begin{theoremmain}\label{ThmSpec gap} 
The following dichotomy holds:
\begin{enumerate}
\item Either $\tau(x)$ is an essential coboundary over $T$; 
\item or for any $s<0$, there is an $\wF$-invariant Hilbert space 
$\CW^{s}$, which is contained in $\CDD'(\TT^{d+\ell})$ and contains $C^{-s}(\TT^{d+\ell})$, 
such that $\wF|\CW^{s}$ has a spectral gap.
\end{enumerate}
\end{theoremmain}

Theorem ~\ref{ThmSpec gap transfer} is equivalent to Theorem~\ref{ThmSpec gap}
if the space $\CW^{-s}$ is defined to be the dual space of $\CW^{s}$. 
We will specify the construction of the Hilbert space $\CW^{s}$ 
in Subsection \ref{SS CWs} (see \eqref{fdef space}), and prove 
the spectral gap for $\wF|\CW^{s}$ in Section \ref{SSproof spec gap}.

\begin{remark}\label{rem generic}
The first case in Theorems~\ref{ThmSpec gap transfer} and~\ref{ThmSpec gap} is very rare in the sense that the closed subspace that consists of all essential coboundaries has infinite codimension in $C^\infty(\TT^d, \RR^\ell)$. It means that 
the second case is generic in the space of skew products, i.e., there is a
subset $\CU$ in $C^\infty(\TT^d, \RR^\ell)$ that is
open and dense in the $C^0$ topology such that 
for all $\tau\in \CU$, the transfer operator of 
the corresponding skew product $F_\tau$ given in \eqref{fdefF} has a spectral gap.

The infinite codimension of essential coboundaries is a crucial property in showing the stable ergodicity of skew products over general hyperbolic systems. Among tremendous results on this topic, we refer the reader to \cite{MR1632190}, \cite{MR1644099},\cite{MR1717580}, \cite{MR2129109}, etc.
\end{remark}

The mixing property of the system $(\TT^{d+\ell}, F, dA)$ is quantified 
by the rates of decay  of correlations. 
We say that the skew product $F$ is \emph{exponentially mixing} with respect 
to the smooth measure $dA$ 
if there exists $\rho\in [0, 1)$ such that 
for any pair of H\"older observables 
$\phi, \psi\in C^\alpha(\TT^{d+\ell})$, $\alpha>0$, 
the correlation function 
\begin{equation*}
C_n(\phi, \psi; F, dA)
=\left|\int \phi\circ F^n \cdot \psi dA- \int \phi dA \int \psi dA\right|
\end{equation*}
satisfies $C_n(\phi, \psi; F, dA)\le C_{\phi, \psi} \rho^n$
for all $n\ge 1$,  
where $C_{\phi, \psi}>0$ is a constant depending on $\phi$ and $\psi$.

\begin{theoremmain}\label{ThmMain thm} 
Let $F=F_\tau: \TT^d\times \TT^\ell \to \TT^d\times \TT^\ell$ be defined as 
in \eqref{fdefF}.  
If $\tau(x)$ is not an essential coboundary over $T$, 
then $F$ is exponentially mixing with respect to $dA$ 
for any pair of H\"older observables $\phi, \psi\in C^{-s}(\TT^{d+\ell})$ 
for any $-s>0$. 
\end{theoremmain}

\begin{remark} $ $
Theorem~\ref{ThmMain thm} follows directly from Theorem~\ref{ThmSpec gap}.
The conclusions of Theorem~\ref{ThmSpec gap} and Theorem~\ref{ThmMain thm} 
immediately imply the following dichotomy:
\begin{enumerate}
\item Either $F$ is exponentially mixing with respect to $dA$;
\item Or $\tau(x)$ is an essential coboundary over $T$.
\end{enumerate}
\end{remark}

If $d=\ell =1$, the dichotomy in the remark is proved by Butterley and Eslami 
\cite{Butterley-Eslami}, in which the circle expansion $T$ 
and the rotation $\tau$ are allowed to have a finite number of discontinuities.

In our context, we say that $F=F_\tau$ is stably ergodic if $F_{\tau'}$ is ergodic for any $\tau'$ that is $C^0$-close to $\tau$. The stable mixing property and stable exponential mixing property are defined in a similar fashion.
 
It was shown by Parry and Pollicott \cite{MR1632190}, and also by Field and Parry \cite{MR1644099}, that 
$(\TT^{d+\ell}, F, dA)$ is weak mixing (or stably mixing) 
if and only if the functions $\tau_1, \tau_2, \dots, \tau_\ell$ are integrally independent 
(or linearly independent) $\mmod$ $\fB$.\footnote{\ 
Originally in \cite{MR1644099}, the independence is modulo $\mathbf{V}+\fB$ for some finite-dimensional subspace $\mathbf{V}$ of $C^\infty(\TT^d, \RR)$. In our setting, $\mathbf{V}=\{0\}$ because the rotation function $\tau$ is null-homotopic in $C^\infty(\TT^d, \RR^\ell)$.
}
In other words, Theorem~\ref{ThmMain thm} asserts that 
if $F$ is stably mixing, then it is exponentially mixing, and furthermore, it is stably exponentially mixing by Remark~\ref{rem generic}.
We can say more about the ergodic properties of the skew product  over an expanding map:  
Dolgopyat \cite{MR1919377} proved that $F$ is stably ergodic if and only if it is exponentially mixing;
Field and Parry \cite{MR1644099} showed that stable ergodicity implies stable mixing property for skew products.
Combining all these results and Theorem~\ref{ThmMain thm}, we immediately obtain the following corollary.
 
\begin{cor} Let $F$ be the skew product given by \eqref{fdefF}. The following statements are equivalent:
\begin{enumerate}
\item $F$ is stably ergodic;
\item $F$ is stably mixing;
\item $F$ is exponentially mixing;
\item $F$ is stably exponentially mixing.
\end{enumerate}
\end{cor}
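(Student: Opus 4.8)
The plan is to obtain the corollary by gluing Theorem~\ref{ThmMain thm} to the results of Dolgopyat \cite{MR1919377} and Field--Parry \cite{MR1644099} recalled above, using as a hinge the condition
$(\star)$: \emph{$\tau$ is not an essential coboundary over $T$} (equivalently, $\tau_1,\dots,\tau_\ell$ are linearly independent $\mmod\fB$). I would first dispose of the two tautological implications: $(2)\Rightarrow(1)$ because mixing implies ergodicity and both are inherited under $C^0$-perturbation, and $(4)\Rightarrow(3)$ by definition. It then suffices to show that each of $(1)$, $(2)$, $(3)$ is equivalent to $(\star)$, and that $(\star)\Rightarrow(4)$; chaining these gives $(1)\Leftrightarrow(2)\Leftrightarrow(3)\Leftrightarrow(4)\Leftrightarrow(\star)$.

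The three equivalences with $(\star)$ are handled as follows. For $(1)\Leftrightarrow(\star)$ I would invoke Dolgopyat \cite{MR1919377}: $F$ is stably ergodic iff the infinitesimally completely non-integrable condition holds, and for torus extensions this condition is exactly $(\star)$; the same source also yields directly that stable ergodicity is equivalent to exponential mixing. For $(2)\Leftrightarrow(\star)$ I would cite the characterization of Field and Parry \cite{MR1644099}, that $(\TT^{d+\ell},F,dA)$ is stably mixing iff $\tau_1,\dots,\tau_\ell$ are linearly independent $\mmod\fB$. For $(3)$, one direction is exactly the dichotomy of Theorem~\ref{ThmMain thm}: if $(\star)$ holds then alternative $(2)$ of that theorem fails, hence alternative $(1)$ holds, i.e. $F$ is exponentially mixing, giving $(\star)\Rightarrow(3)$. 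The reverse $(3)\Rightarrow(\star)$ is the one step that is not pure bookkeeping, since Theorem~\ref{ThmMain thm} by itself does not forbid exponential mixing in the coboundary case; I would derive it from Dolgopyat's equivalence (exponential mixing $\Rightarrow$ stable ergodicity $\Rightarrow(\star)$), or, alternatively, argue directly: if $v\cdot\tau=c+u-u\circ T$ with $v\neq\bzero$ and $u\in C^\infty$ (Livsic), then taking integer vectors $\bnu_k$ with $\bnu_k/|\bnu_k|\to v/|v|$ produces observables trigonometric in $y$ of frequency $\bnu_k$ for which the fibered operators $\wF_{\bnu_k}$ have spectral radius tending to $1$, so no uniform exponential rate can hold over all H\"older observables.

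Finally, for $(\star)\Rightarrow(4)$ I would use that $(\star)$ is an open condition in the $C^0$ topology on $\tau$: by Remark~\ref{rem generic} the essential coboundaries form a $C^0$-closed subspace (indeed of infinite codimension; concretely, via Livsic, $\varphi\in\fB$ is cut out by the closed periodic-orbit conditions $\sum_{j=0}^{n-1}\varphi(T^jx)=n\!\int\!\varphi\,d\mu$ over all $n$-periodic $x$). Hence a whole $C^0$-neighborhood of any $\tau$ satisfying $(\star)$ again satisfies $(\star)$, and by $(\star)\Rightarrow(3)$ (that is, by Theorem~\ref{ThmMain thm}) every skew product in that neighborhood is exponentially mixing, so $F_\tau$ is stably exponentially mixing; this is $(4)$. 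One could equally bypass the geometry of $\fB$ and note that stable ergodicity is tautologically an open property of $\tau$, so that $(1)\Rightarrow(4)$ follows by applying $(1)\Leftrightarrow(3)$ to each nearby $\tau'$. Assembling the implications completes the proof. I expect the only genuine content beyond citation-chasing to be the implication $(3)\Rightarrow(\star)$ — the statement that exponential mixing for \emph{all} H\"older observables, in the strong sense of our definition, already excludes the essential-coboundary alternative.
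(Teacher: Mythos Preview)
Your proposal is correct and follows essentially the same route as the paper: the paper's text immediately preceding the corollary already assembles exactly the ingredients you list (Dolgopyat for $(1)\Leftrightarrow(3)$, Field--Parry for the equivalence of stable mixing with linear independence $\mmod\fB$, Theorem~\ref{ThmMain thm} for $(\star)\Rightarrow(3)$, and Remark~\ref{rem generic} for the openness giving $(4)$), and the corollary is then stated without further proof. Your explicit hinge $(\star)$ just makes the logical glue visible. One caveat: your \emph{alternative} direct argument for $(3)\Rightarrow(\star)$ --- approximating an irrational $v$ by integer directions $\bnu_k$ and claiming the spectral radii of $\wF_{\bnu_k}$ tend to $1$ --- is not justified as written (nearby directions need not produce nearby spectral radii, and indeed the unstably-mixing example in the remark after the corollary shows the coboundary case can still be mixing for every integer frequency), so you should rely on the Dolgopyat route there, which you already flagged as the primary one.
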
 
 
\begin{remark} $ $

{\rm (i)} In the case when $\ell=1$, if $F$ is mixing, then $F$ is stably mixing and thus (stably) exponentially mixing. This is simply because
that integral independence and linear independence $\mmod$ $\fB$ are the same for $\tau\in C^\infty(\TT^d, \RR)$.

{\rm (ii)}
Let $F_0: \TT^3\to \TT^3$ be given by
\begin{equation*}
F_0(x, y_1, y_2)=(2x, y_1+\tau_0(x), y_2+\sqrt3 \tau_0(x)) \pmod{\ZZ^3},
\end{equation*}
where $\tau_0(x)$ is not a real-valued essential coboundary over the linear expanding map $x\mapsto 2x \pmod \ZZ$ of the circle. It is clear that  $\tau_0(x)$ and $\sqrt{3}\tau_0(x)$ are integrally independent but not linearly independent $\mmod$ $\fB$, and hence $F_0$ is unstably mixing and thus unstably ergodic by \cite{MR1632190}, \cite{MR1644099}.

{\rm (iii)} In \cite{MR3772032}, Zhang  considered a circle extension $F$ of a linear circle endomorphism,
and showed that if $F$ is stably ergodic in $C(\TT^2, \TT^2)$, then the SRB densities vary smoothly for maps 
in a neighborhood of $F$.
The technique therein suggests that we may be able to obtain the dichotomy in Theorem~\ref{ThmSpec gap transfer}
for small perturbations of $F$, which need not be skew products.
\end{remark}

\smallskip

Next we characterize the dynamical properties of $F=F_\tau$ when the rotation vector $\tau=(\tau_1, \tau_2, \dots, \tau_\ell)$ 
is an essential coboundary, that is, the functions $\tau_1, \tau_2, \dots, \tau_\ell$ are linearly dependent $\mmod$ $\fB$. There are two cases:
\begin{enumerate}
\item If $\tau_1, \tau_2, \dots, \tau_\ell$ are integrally dependent $\mmod$ $\fB$, then the behaviors of $F_\tau$ in the 
$\TT^\ell$ direction become very simple, as we see in Part (iii) of the next theorem. In particular, $F_\tau$ is not weak mixing.
\item If $\tau_1, \tau_2, \dots, \tau_\ell$ are linearly dependent but integrally independent $\mmod$ $\fB$, then $F_\tau$ is unstably mixing. 
We can approximate $F_\tau$ by a sequence of non-mixing skew products $F_{\tau(n)}$ as follows. Pick real-valued sequences $\{c_{n, i}\}_{n\in \NN}$, $i=1, 2, \dots, \ell$, such that $\lim_{n\to\infty} c_{n,i}=1$ and $c_{n, 1}\tau_1, c_{n, 2}\tau_2, \dots, c_{n,\ell} \tau_\ell$ are integrally dependent $\mmod$ $\fB$.
Then set $\tau(n)=(c_{n, 1}\tau_1, c_{n, 2}\tau_2, \dots, c_{n,\ell} \tau_\ell)$.
\end{enumerate}

A foliation $\CL$ of a smooth manifold $M$ is of dimension $m$ 
if the leaves of $\CL$ are $m$ dimensional submanifolds.
For a smooth dynamical system $(F, M)$, a foliation $\CL$ of $M$ is 
$F$ invariant if $F$ preserves the leaves, that is, $F(\CL(z))=\CL(F(z))$ 
for any $z\in M$, where $\CL(z)$ is the leaf of $\CL$ containing $z$.  



A smooth dynamical system $(F, M)$ is semiconjugate to 
a smooth system $(G, N)$ if there is a smooth map $\pi: M\to N$ such that 
$\pi\circ F=G\circ \pi$.

\begin{theoremmain}\label{ThmCobd}
Let $F=F_\tau: \TT^d\times \TT^\ell \to \TT^d\times \TT^\ell$ be defined as 
in \eqref{fdefF}.  The following conditions are equivalent.
\begin{enumerate}[{\rm (i)}]
\item
$\tau_1, \tau_2, \dots, \tau_\ell$ are integrally dependent $\mmod$ $\fB$;

\item  There is an $F$ invariant $d+\ell -1$ dimensional 
foliation $\CL$ of $\TT^d\times \TT^\ell$ and a vector 
$v\in \ZZ^\ell\setminus \{0\}$ such that restricted to each fiber
$\{x\}\times \TT^\ell$, the leaves of $\CL|_{\{x\}\times \TT^\ell}$ 
are $\ell -1$ dimensional and normal to $v$.

\item $F$ is semiconjugate to the map 
$G=T\times R_{c}: \TT^d\times \TT\to \TT^d\times \TT$ 
through a continuous map $\pi: \TT^d\times \TT^\ell \to \TT^d\times \TT$, 
where $R_{c}:\TT\to \TT$
is a circle rotation with rotation number $c\in \RR$.
Further, $F$ is semiconjugate to $R_{c}$.

\item $F$ is not weak mixing.
\end{enumerate}
\end{theoremmain}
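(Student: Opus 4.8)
The plan is to prove the cycle of implications $(i)\Rightarrow(ii)\Rightarrow(iii)\Rightarrow(iv)\Rightarrow(i)$, the hardest step being $(iv)\Rightarrow(i)$, which is the only one that requires genuine input beyond bookkeeping (the others are essentially explicit constructions).

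For $(i)\Rightarrow(ii)$, assume $v\cdot\tau(x)=c+u(x)-u(Tx)$ $\mu$-a.e.\ with $v\in\ZZ^\ell\setminus\{0\}$ and, by the Livsic regularity remark, $u\in C^\infty(\TT^d,\RR)$. I would define the leaf through $(x,y)$ to be the set $\{(x,y'): v\cdot y' = v\cdot y - u(x) \pmod{\gcd\text{-structure of }v\cdot\ZZ^\ell}\}$; more precisely, consider the smooth function $\Phi(x,y)=v\cdot y+u(x)\in\TT$ (well defined on $\TT^d\times\TT^\ell$ because $v\in\ZZ^\ell$), whose level sets form a $C^\infty$ foliation $\CL$ of codimension one, i.e.\ of dimension $d+\ell-1$, with the fiberwise leaves $\CL|_{\{x\}\times\TT^\ell}$ being the affine $(\ell-1)$-dimensional subtori normal to $v$. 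Invariance follows from the cohomological equation: $\Phi(F(x,y))=v\cdot(y+\tau(x))+u(Tx)=v\cdot y+u(x)+c=\Phi(x,y)+c$, so $F$ permutes level sets of $\Phi$, i.e.\ preserves the leaves of $\CL$. For $(ii)\Rightarrow(iii)$, this same computation shows that $\pi(x,y):=(x,\Phi(x,y))=(x,v\cdot y+u(x))$ is a continuous (indeed smooth) surjection onto $\TT^d\times\TT$ with $\pi\circ F=(T\times R_c)\circ\pi$, giving the semiconjugacy to $G=T\times R_c$; composing with the projection to the second coordinate gives the semiconjugacy to $R_c$ itself. The implication $(iii)\Rightarrow(iv)$ is immediate: if $F$ were weak mixing it would have no nonconstant measurable eigenfunctions, but $e^{2\pi i\Phi}$ pulls back the eigenfunction of $R_c$ and so (if $c$ is irrational) is a nonconstant eigenfunction, while if $c$ is rational $F$ has a nontrivial factor with periodic behavior; in either case $F\times F$ is not ergodic on the relevant set, so $F$ is not weak mixing. (One should note here that $c\neq 0$ may fail; if $c=0$ then $\Phi$ is itself $F$-invariant and nonconstant, which even more directly destroys ergodicity, hence weak mixing — so the argument is uniform in $c$.)

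The substantive step is $(iv)\Rightarrow(i)$, or contrapositively: if $\tau_1,\dots,\tau_\ell$ are integrally independent $\mmod\ \fB$, then $F$ is weak mixing. Here I would invoke the structure already available in the paper rather than re-prove it from scratch. By Theorem~\ref{ThmSpec gap}, if $\tau$ is not an essential coboundary at all then $F$ is exponentially mixing, hence weak mixing, and we are done; so we may assume $\tau$ \emph{is} an essential coboundary, i.e.\ linearly dependent but (by hypothesis) integrally independent $\mmod\ \fB$. To show weak mixing one analyzes the measurable eigenfunctions: suppose $\wF g=e^{2\pi i\theta}g$ for some $g\in L^2(\TT^{d+\ell},dA)$, $g\neq 0$. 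Fourier expanding $g=\sum_{\bnu\in\ZZ^\ell}g_\bnu(x)e^{2\pi i\bnu\cdot y}$ and using $F(x,y)=(Tx,y+\tau(x))$ transforms the eigenvalue equation into $g_\bnu(x)e^{2\pi i\bnu\cdot\tau(x)}=e^{2\pi i\theta}g_\bnu(Tx)$ for each $\bnu$. For a $\bnu$ with $g_\bnu\not\equiv 0$, ergodicity of $T$ forces $|g_\bnu|$ to be a.e.\ constant and nonzero, and writing $g_\bnu=|g_\bnu|e^{2\pi i w(x)}$ turns this into the cohomological equation $\bnu\cdot\tau(x)=\theta+w(Tx)-w(x)\pmod{1}$, i.e.\ $\bnu\cdot\tau$ is an essential coboundary with constant $\theta$ realized \emph{modulo $1$}. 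The key point is that integral independence $\mmod\ \fB$ rules this out for every $\bnu\neq 0$: indeed if $\bnu\cdot\tau(x)=\theta+w(Tx)-w(x)\pmod 1$ for a measurable $w$, one lifts $w$ locally and uses that $\TT^d$ is connected together with the Livsic-type closing-lemma argument (as in \cite{MR1632190}) to conclude $\bnu\cdot\tau(x)=\theta'+\tilde w(Tx)-\tilde w(x)$ in $\RR$ for some $\theta'\in\RR$ and $\tilde w\in C^\infty$, contradicting $\bnu\notin$ (the integral-dependence set). Hence only $\bnu=0$ survives, so $g=g_0(x)$ depends on $x$ alone and $\wF g=g_0\circ T=e^{2\pi i\theta}g_0$; mixing of $T$ then forces $\theta=0$ and $g_0$ constant. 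Therefore $F$ has only constant eigenfunctions, i.e.\ $F$ is weak mixing, completing the cycle.

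I expect the main obstacle to be the lifting step in $(iv)\Rightarrow(i)$: passing from a cohomological equation valid modulo $1$ (as produced by a measurable eigenfunction with values in the circle) to one valid in $\RR$, which is exactly the distinction between integral and linear dependence. The clean way to handle it is to cite the corresponding argument of Parry--Pollicott \cite{MR1632190} (or Field--Parry \cite{MR1644099}), which already establishes precisely that $F$ is weak mixing iff $\tau_1,\dots,\tau_\ell$ are integrally independent $\mmod\ \fB$; indeed the discussion preceding Theorem~\ref{ThmCobd} cites these works for exactly this equivalence, so $(i)\Leftrightarrow(iv)$ can be quoted directly and the real content of Theorem~\ref{ThmCobd} is the geometric refinement $(i)\Rightarrow(ii)\Rightarrow(iii)$, which is what the explicit construction via $\Phi(x,y)=v\cdot y+u(x)$ supplies.
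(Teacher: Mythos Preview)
Your argument tracks the paper's proof closely for $(i)\Rightarrow(ii)$, $(iii)\Rightarrow(iv)$, and $(iv)\Rightarrow(i)$: the function $\Phi(x,y)=v\cdot y+u(x)$ and the computation $\Phi\circ F=\Phi+c$ are exactly what the paper uses, and the final implication is disposed of in the paper by the same citation to Parry--Pollicott and Field--Parry that you invoke.

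There is, however, a genuine gap in your $(ii)\Rightarrow(iii)$ step. You write ``this same computation shows that $\pi(x,y):=(x,\Phi(x,y))$ is a continuous surjection\dots'', but $\Phi$ was built from the transfer function $u$ supplied by hypothesis $(i)$, not from the foliation data in $(ii)$. Under hypothesis $(ii)$ alone you are given only the $F$-invariant foliation $\CL$ and the integer vector $v$; you do not have $u$ or $c$, and you must manufacture the semiconjugacy from the foliation itself. As written, your cycle actually proves $(i)\Rightarrow(ii)$, $(i)\Rightarrow(iii)$, $(iii)\Rightarrow(iv)$, $(iv)\Rightarrow(i)$, which leaves $(ii)$ dangling: nothing you have shown rules out that $(ii)$ is strictly weaker than the other three.

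The paper repairs this by an intrinsic construction: pick a fixed point $p$ of $T$, observe that $\CL$ restricted to the fiber $\{p\}\times\TT^\ell$ is the foliation by subtori normal to $v$, so the leaf space over $p$ is a circle $\TT$ with quotient map $\pi_p(p,y)=v\cdot y\pmod\ZZ$. Since $F$ preserves $\{p\}\times\TT^\ell$ and the leaves, it descends to a rotation $R_c$ on this circle, and the rotation number $c$ is recovered as $v\cdot\tau(p)\pmod\ZZ$. One then extends $\pi_p$ to all of $\TT^d\times\TT^\ell$ by sliding along leaves of $\CL$ back to the fiber over $p$, obtaining $\pi(x,y)=(x,\pi_p(p,y'))$ where $(p,y')\in\CL(x,y)\cap(\{p\}\times\TT^\ell)$. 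This uses only the data in $(ii)$ and gives the required semiconjugacy. You should replace your $(ii)\Rightarrow(iii)$ paragraph with this argument (or an equivalent one that does not smuggle in $u$).
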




\section{Semiclassical Analysis: Preliminaries}

In this section we introduce some notions and basic properties
in semiclassical analysis which we are going to use.   
The distribution spaces and Sobolev spaces will be used in construction 
of the Hilbert space $\CW^{s}$ in Theorem \ref{ThmSpec gap}. 
The pseudo-differential operators (PDO) and Fourier integral operators (FIO)
will be used to prove Proposition~\ref{Pprop1} and \ref{Pprop2},
where the Egorov's theorems and the $L^2$-continuity theorems are also used.
For more information and details on the general theory of PDOs and FIOs, one can see in standard
references (e.g. \cite{MR2744150, MR1872698, MR872855, MR2952218}).

\subsection{Function spaces}

In this subsection we introduce the notion of distribution spaces and 
Sobolev spaces, and then construct the Hilbert space $\CW^{s}$ in 
Theorem~\ref{ThmSpec gap} using Sobolev spaces.
\subsubsection{Distribution spaces and Sobolev spaces}
Let $\CDD(\TT^{d+\ell})=C^\infty(\TT^{d+\ell})$.  
Its dual space $\CDD'(\TT^{d+\ell})$ is the space of distributions 
on $\TT^{d+\ell}$. 
There is a natural injection 
$i:\CDD(\TT^{d+\ell})\hookrightarrow \CDD'(\TT^{d+\ell})$ given by
\begin{equation}\label{fdualfun}
\psi(\varphi):=\int_{\TT^{d+\ell}} \psi(x,y)\varphi(x,y) \ dxdy 
\end{equation}
for any $\varphi, \psi\in \CDD(\TT^{d+\ell})$.
Hence we can regard that $\CDD(\TT^{d+\ell})\subset \CDD'(\TT^{d+\ell})$.\\

The \emph{Fourier transform} of $\varphi\in L^2(\TT^d)$ is defined by 
\begin{equation}\label{fdefFTransf}
\wht{\varphi}(\xi)=\int_{\TT^d} \varphi(x) e^{-i2\pi x\cdot \xi } dx, 
\quad \ \xi\in \ZZ^d. 
\end{equation}
Note that the Fourier transform is an isometry from $L^2(\TT^d)$ to $\ell^2(\ZZ^d)$,
and the \emph{inverse transform} is given by
\begin{equation}\label{fdefinvFTransf}
{\varphi}(x)=\sum_{\xi\in \ZZ^d}\wht\varphi(\xi) e^{i2\pi \xi\cdot x}, 
\quad \ x\in \TT^d. 
\end{equation}
It is well known that if $\varphi\in \CDD(\TT^d)$, then $\wht{\varphi}(\xi)$ converges to zero super-polynomially fast
as $|\xi|\to \infty$. 

Let $\om$ be the counting measure over the lattice $\ZZ^d$ on $\RR^d$,   
i.e., $\disp \om(\xi)=\sum_{n\in \ZZ^d} \de(\xi-n)$ for $\xi\in \RR^d$. 
Then by the above equations we have
\begin{equation}\label{fFTransfID}
{\varphi}(x)=\int_{\RR^d} \wht\varphi(\xi) e^{i2\pi \xi\cdot x}d\om(\xi) 
=\int_{\RR^d} \int_{\TT^d}  e^{i2\pi(x-y)\cdot  \xi}\varphi(y) dy d\om(\xi),
\ \ x\in \TT^d. 
\end{equation}


Denote $\lan \xi\ran=\sqrt{1+|\xi|^2}$, and introduce the standard $s$-inner product 
\begin{equation}\label{fdefinnerprod}
\lan \varphi, \psi\ran_{s}
=\sum_{\xi\in \ZZ^d} 
\lan\xi\ran^{2s} \wht{\varphi}(\xi)\overline{\wht{\psi}(\xi)}, 
\quad \ \varphi, \psi\in \CDD(\TT^d)
\end{equation}
for any $s\in \RR$.
The Sobolev space $H^s(\TT^d)$ is the completion of $\CDD(\TT^d)$ 
under $\lan\cdot, \cdot\ran_s$. 

\begin{prop}\label{PSob space}
Sobolev spaces have the following properties:
\begin{itemize}
\item[(i)] 
$\CDD(\TT^d)\subset H^s(\TT^d) \subset \CDD'(\TT^d)$ for any $s\in \RR$;

\item[(ii)] 
$H^0(\TT^d)=L^2(\TT^d)$, and $H^s(\TT^d)
=\{\varphi: D_x^\beta \varphi\in L^2(\TT^d)\ \text{for any}\ |\beta|\le s\}$
if $s\in \NN$, where $D_x^\beta\varphi$ are weak derivatives of $\varphi$;

\item[(iii)] 
$H^s(\TT^d)\subset H^{s'}(\TT^d)$ if $s>s'$;

\item[(iv)] 
$C^s(\TT^d)\subset H^s(\TT^d)$ for all $s\ge 0$, and if $s>\frac{d}{2}$, 
then $H^s(\TT^d)\subset C^{s-\frac{d}{2}-\ve}(\TT^d)$ for any small $\ve>0$;

\item[(v)] 
the dual space of $H^s(\TT^d)$, $s>0$, is $H^{-s}(\TT^d)$, 
and the dual action of $\phi\in L^2(\TT^d)\subset H^{-s}(\TT^d)$ 
on the function $\psi\in H^s(\TT^d)$ is given by (\ref{fdualfun}).
\end{itemize}
\end{prop}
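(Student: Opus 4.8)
\textbf{Proof plan for Proposition~\ref{PSob space}.}

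The plan is to verify each of the five items directly from the Fourier-side definition \eqref{fdefinnerprod} of the $s$-inner product, since on the torus every statement reduces to a comparison of weighted $\ell^2$-norms of Fourier coefficients. For (i), I would note that if $\varphi\in\CDD(\TT^d)$ then $\wht\varphi(\xi)$ decays faster than any polynomial, so $\sum_\xi \lan\xi\ran^{2s}|\wht\varphi(\xi)|^2<\infty$ for every $s$, giving $\CDD\subset H^s$; and an element of $H^s$ is a sequence $(a_\xi)$ with $\sum\lan\xi\ran^{2s}|a_\xi|^2<\infty$, which defines a distribution via $\psi\mapsto\sum_\xi a_\xi\overline{\wht\psi(-\xi)}$ (or the appropriate pairing), the sum converging because $\wht\psi$ decays rapidly — hence $H^s\subset\CDD'$. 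For (iii), the elementary inequality $\lan\xi\ran^{2s'}\le\lan\xi\ran^{2s}$ for $s>s'$ and all $\xi$ gives $\|\cdot\|_{s'}\le\|\cdot\|_s$ on $\CDD$, so the completions inject continuously.

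For (ii), the case $s=0$ is Parseval's identity $\sum_\xi|\wht\varphi(\xi)|^2=\|\varphi\|_{L^2}^2$; for $s\in\NN$ one uses that the Fourier coefficient of $D_x^\beta\varphi$ is $(i2\pi\xi)^\beta\wht\varphi(\xi)$, so that $\sum_{|\beta|\le s}\|D_x^\beta\varphi\|_{L^2}^2=\sum_\xi\big(\sum_{|\beta|\le s}|2\pi\xi^\beta|^2\big)|\wht\varphi(\xi)|^2$, and the multiplier $\sum_{|\beta|\le s}|2\pi\xi^\beta|^2$ is comparable (two-sided) to $\lan\xi\ran^{2s}=(1+|\xi|^2)^s$; density of $\CDD$ then passes the identification to the completion. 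For (v), the duality $H^{-s}=(H^s)'$ is the statement that the pairing $\lan\varphi,\psi\ran=\sum_\xi\wht\varphi(\xi)\overline{\wht\psi(\xi)}$ identifies the dual of the weighted space $\ell^2(\lan\xi\ran^s)$ with $\ell^2(\lan\xi\ran^{-s})$ via Cauchy–Schwarz and the fact that weighted $\ell^2$ spaces are their own abstract duals after rescaling; one checks the pairing restricts to \eqref{dist action} on $L^2$ by Parseval again.

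The one item requiring genuine analysis rather than bookkeeping is (iv), the Sobolev embedding. For $C^s\subset H^s$ with $s\ge0$ integer this follows from (ii); for non-integer $s$ one interpolates, or argues directly that a $C^s$ (Hölder) function has $\wht\varphi(\xi)=O(|\xi|^{-s})$ in an averaged $\ell^2$ sense via a Littlewood–Paley / dyadic decomposition estimate $\sum_{|\xi|\sim 2^j}|\wht\varphi(\xi)|^2\lesssim 2^{-2js}\|\varphi\|_{C^s}^2$, obtained by testing against the modulus of continuity. For the reverse embedding $H^s\subset C^{s-d/2-\ve}$ when $s>d/2$: given $\varphi\in H^s$, write $\varphi(x)=\sum_\xi\wht\varphi(\xi)e^{i2\pi\xi\cdot x}$ and estimate the sup-norm of the remainder after a dyadic block, using Cauchy–Schwarz block by block — $\sum_{|\xi|\sim 2^j}|\wht\varphi(\xi)|\le\big(\sum_{|\xi|\sim 2^j}\lan\xi\ran^{2s}|\wht\varphi(\xi)|^2\big)^{1/2}\big(\sum_{|\xi|\sim 2^j}\lan\xi\ran^{-2s}\big)^{1/2}\lesssim 2^{-js}\cdot 2^{j(d/2-s)}\cdot\ldots$ — wait, one tracks the exponents to get $2^{j(d/2-s)}$ times the block $H^s$-mass, which is summable since $s>d/2$, yielding absolute uniform convergence hence continuity; differentiating $k$ times costs a factor $2^{jk}$ per block and the same computation gives a bounded $C^k$-seminorm as long as $k<s-d/2$, and a Hölder estimate for the fractional part $s-d/2-k-\ve$ comes from the standard trick of comparing $|\varphi(x)-\varphi(x')|$ with the block decomposition, estimating low blocks by the gradient bound and high blocks by the sup bound. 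The main obstacle is thus purely the careful dyadic bookkeeping in (iv); everything else is a transcription of the definitions through Parseval. Since these are all standard facts, I would state the proof compactly, proving (ii) and (v) in a line each and (iv) by citing the standard references \cite{MR2744150, MR2567604} after indicating the dyadic argument.
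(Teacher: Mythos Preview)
Your proof plan is correct, but note that the paper does not actually prove this proposition: it is stated as a collection of standard background facts about Sobolev spaces on the torus, with no proof supplied (the text moves directly from the statement to introducing the $t$-scaled inner products). Your own instinct at the end of the proposal --- to handle (i)--(iii), (v) in a line each via Parseval and weighted $\ell^2$ duality, and to cite \cite{MR2744150, MR2567604} for the embedding (iv) --- is exactly in the spirit of how the paper treats it, so there is nothing to compare.
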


For technical treatments, besides the standard $s$-inner product given in \eqref{fdefinnerprod}, we will also use $t$-scaled $s$-inner product on $H^s(\TT^d)$ for $t>0$, that is, 
\begin{equation}\label{fdef innerprod t scale}
\lan\varphi, \psi\ran_{s, t}=t^{2s}\lan\varphi, \psi\ran_s=\sum_{\xi\in \ZZ^d} t^{2s}
\lan\xi\ran^{2s} \wht{\varphi}(\xi)\overline{\wht{\psi}(\xi)}.
\end{equation}
When equipped with $\lan\cdot, \cdot\ran_{s,t}$, the space is denoted by $H^s_t(\TT^d)$.
See Section~\ref{Sec decomp Koopman} for our particular choices of the scaling factor $t$.
We shall also introduce another different but equivalent inner product on $H^s(\TT^d)$ in Section~\ref{SSsp op}.

\subsubsection{The Hilbert space $\CW^{s}$ and $\CW^{-s}$}\label{SS CWs}
The Hilbert space $\CW^{s}$ which we will use in Theorem~\ref{ThmSpec gap} 
is of the form 
\begin{equation}\label{fdef space}
\CW^{s}=H^{s}(\TT^d)\otimes H^{-s}(\TT^\ell),
\end{equation}
where $s<0$, equipped with the inner product given by 
\begin{equation*}
\lan \varphi_1\otimes \psi_1, \varphi_2\otimes \psi_2\ran_{\CW^{s}}
=\lan \varphi_1, \varphi_2\ran_{H^{s}(\TT^d)}\ \lan \psi_1, \psi_2\ran_{H^{-s}(\TT^\ell)}
\end{equation*}
and extended by linearity.\footnote{\
In this paper, the tensor product of two Banach or Hilbert spaces always refers to the metric space completion of their algebraic tensor product.} 
We shall give a more explicit formula of $\lan \cdot, \cdot\ran_{\CW^{s}}$ in Section \ref{Sec decomp Koopman}.

Using the duality \eqref{fdualfun}, we see that the dual Hilbert space is
\begin{equation}\label{fdef space transfer}
\CW^{-s}=(\CW^s)'=H^{-s}(\TT^d)\otimes H^{s}(\TT^\ell).
\end{equation}

\begin{remark}\label{Rspace}
By Proposition~\ref{PSob space}(ii) and (iii), we have 
$L^2(\TT^d)\subset H^s(\TT^d)$ and $C^{-s}(\TT^\ell)\subset H^{-s}(\TT^\ell)$ when $s<0$, thus 
$$\CW^s\supset  
L^2(\TT^d)\otimes C^{-s}(\TT^\ell)\supset  C^{-s}(\TT^{d+\ell}).$$
Similarly, the space $\CW^{-s}$ contains $C^{-s}(\TT^{d+\ell})$ as well. 
By Proposition~\ref{PSob space}(i), it is obvious that $\CW^s$ and $\CW^{-s}$ 
are both contained in $\CDD'(\TT^{d+\ell})$.
\end{remark}

Recall that the Koopman operator 
$\wF: \CDD(\TT^{d+\ell})\to \CDD(\TT^{d+\ell})$ for $F$ is defined by 
$\wF\phi=\phi\circ F$, and the dual operator 
$\wF': \CDD'(\TT^{d+\ell})\to \CDD'(\TT^{d+\ell})$ 
is the RPF (Ruelle-Perron-Frobenius) transfer operator
given by \eqref{fdualop}.  
With the duality given by \eqref{fdualfun}, 
the transfer operator can be explicitly expressed as
\begin{equation*}\label{ftransfop}
\wF'\psi(x, y)
=\sum_{F(z, w)=(x, y)} \dfrac{\psi(z, w)}{|\Jac(F)(z, w)|}
\quad \text{for any}\ \psi\in \CDD(\TT^{d+\ell}).
\end{equation*}
See \cite{MR2129258} for more details.

By the duality \eqref{fdualfun} again, we can extend the domain of 
the Koopman operator to $\CDD'(\TT^{d+\ell})$ such that for any 
$\psi\in \CDD'(\TT^{d+\ell})$, $\wF \psi$ is the distribution satisfying 
\begin{equation*}
(\wF \psi)(\phi)=\psi(\wF' \phi) \quad \text{for any}\  
\phi\in \CDD(\TT^{d+\ell}).
\end{equation*}
It is easy to see that $\wF$ can act on $\CW^{s}$
and $\wF'$ can act on $\CW^{-s}$.





\subsection{Semiclassical analysis on the torus}

In this subsection we introduce the pseudo-differential operators 
and Fourier integral operators.
The underlying manifold that we analyze is the torus. 
In standard references the quantization on manifolds is usually defined
locally in charts.   
For the global definitions on tori we recommend  Chapter 4 
in \cite{MR2567604}.

\subsubsection{Symbols}

The cotangent bundle over $\TT^d$ can be identified as 
$T^*\TT^d\cong \TT^d\times \RR^d$. 
Denote $\NN_0=\NN\cup\{0\}$ and $\NN_0^d=(\NN_0)^d$.

\begin{definition}
A complex-valued function $a\in C^\infty(T^*\TT^d)$ is called 
a \emph{classical symbol of order $m\in \RR$} on $T^*\TT^d$ if 
\begin{equation}\label{def seminorm}
\CN_{\al\beta, m}(a):= \sup_{(x,\xi)\in T^*\TT^d} 
\dfrac{|\p_x^\al\p_\xi^\beta a(x,\xi)|}{ \lan\xi\ran^{m-|\beta|} }<\infty
\end{equation}
for any $\al, \beta\in \NN_0^d$,   where $\lan\xi\ran=\sqrt{1+|\xi|^2}$. 

A complex-valued function $a\in C^\infty(T^*\TT^d \times (0,1] )$ is called 
a \emph{semiclassical symbol of order $m\in \RR$} on $T^*\TT^d$  
if \eqref{def seminorm} holds uniformly in $\hb$ with $a(x,\xi)$
replaced by $a(x,\xi; \hb)$.\footnote{
In the general theory of semiclassical analysis, $\hb\ll 1$ is the 
\emph{Planck's constant} parametrizing the whole family of symbol functions 
and thus the symbol calculus for corresponding semiclassical 
pseudo-differential operators. }

The space of symbols of order $m$, either classical or semiclassical, 
is denoted as $S^m$, which is short for $S^m(T^*\TT^d)$.
\end{definition}

The topology on the space $S^m$ is generated by the seminorms 
$\{\CN_{\al\beta, m}(\cdot)\}_{\al, \beta\in \NN_0^d}$ given by \eqref{def seminorm}.  
For any $k\in \NN_0$, 
we denote $\CN_{k, m}(a)=\sup_{|\al|+|\beta|\le k} \CN_{\al\beta, m}(a).$
We often write $\CN_k(a)$ for short if the order of $a$ is clear. 

Note that if $a\in S^m$ and $b\in S^{m'}$, then $a+b\in S^{\max\{m, m'\}}$, 
and $ab\in S^{m+m'}$.
Also, for any $a\in S^m$ and $\al\in \NN_0^d$, $\p_x^\al a\in S^m$ and 
$\p_\xi^\al a\in S^{m-|\al|}$.
For these operations, we have corresponding seminorm relations, 
for instance, $\CN_{k}(\p_x^\al a)\le \CN_{k+|\al|}(a)$.

\begin{remark}
The standard toroidal symbols are defined on $\TT^d\times \ZZ^d$.  
Here we use symbols $a(x, \xi)$ defined on $T^*\TT^d\cong \TT^d\times \RR^d$ 
since we need to use $a(x, \hb\xi)$ for $\hb\in (0,1]$ as well
(see Definition~\ref{DefPDO} and \ref{DefhFIO}).
However, the two different domains give the same family of symbols, 
because a symbol $\tilde a\in S^m(\TT^d\times \ZZ^d)$ 
is a toroidal symbol 
if and only if there exists a symbol $a\in S^m(\TT^d\times \RR^d)$ 
such that $\tilde a = a|_{\TT^d\times \ZZ^d}$  (see e.g. \cite{MR2567604}, Theorem~4.5.3).
\end{remark}

\subsubsection{Pseudo-differential operators}

Let $\hb\in (0,1]$.

\begin{definition}\label{DefPDO} 
Given a symbol $a\in S^m$, the linear operator $\Op_\hb(a): \CDD(\TT^d)\to \CDD(\TT^d)$ defined by
\begin{equation}\label{fdefhPDO}
\begin{split}
\Op_\hb(a)\varphi(x)
=& \int_{T^*\TT^d} a(x,\xi) e^{i2\pi\frac{\xi}{\hb}\cdot(x-y)} \varphi(y) dyd \left(\frac{\xi}{\hb}\right) \\ 
=&\int_{T^*\TT^d} a(x,\hb\xi) e^{i2\pi\xi\cdot(x-y)} \varphi(y) dyd\xi 
\end{split}
\end{equation}
is called a (toroidal) $\hb$-scaled \emph{pseudo-differential operator (PDO)} 
of order $m$ corresponding to the symbol $a\in S^m$. 
We denote the space of $\hb$-scaled PDOs of order $m$ by $\OP_\hb S^m$. 
\end{definition}

It is easy to check that \eqref{fdefhPDO} implies that $\Op_\hb(a)\left(C^\infty_c(\TT^d) \right)\subset \CDD(\TT^d)$,
and thus $\Op_\hb(a)\left(\CDD(\TT^d) \right)\subset \CDD(\TT^d)$ by the compactness of $\TT^d$.
Hence $\Op_\hb(a)$ is a well-defined operator from $\CDD(\TT^d)$ to itself.

By standard duality argument, we extend 
$\Op_\hb (a): \CDD'(\TT^d)\to \CDD'(\TT^d)$. 
That is, if we let $\Op_\hb(a)': \CDD'(\TT^d)\to \CDD'(\TT^d)$ be 
the dual operator of $\Op_\hb(a)$, then for any $\varphi\in \CDD'(\TT^d)$,
$\Op_\hb (a)'\varphi$ is defined by the formula
\begin{equation*}
(\Op_\hb (a)'\varphi)(\psi)
=\varphi(\Op_\hb (a)\psi)   \qquad \forall \psi\in \CDD(\TT^d).
\end{equation*}
Note that $\varphi(\psi)=\int_{\TT^d} \varphi(x) \psi(x) dx$ if $\varphi\in \CDD(\TT^d)$ 
is regarded as an element in $\CDD'(\TT^d)$.  It is then not hard to obtain 
\begin{equation*}
\Op_\hb(a)'\varphi(x)=\int_{T^*\TT^d} a(y, \hb\xi) e^{i2\pi\xi\cdot(x-y)} \varphi(y) dyd\xi 
\qquad \forall \varphi\in \CDD(\TT^d),
\end{equation*}
which implies that the dual operator $\Op_\hb (a)'$ preserves the space $\CDD(\TT^d)$. 
Therefore, by the duality, for any  $\psi\in \CDD'(\TT^d)$, 
$\Op_\hb (a)\psi \in \CDD'(\TT^d)$ can be defined by
\begin{equation*}
(\Op_\hb (a)\psi)(\varphi)=\psi(\Op_\hb (a)'\varphi) \qquad 
\forall \varphi\in \CDD(\TT^d).
\end{equation*}

Moreover, for any $s\in \RR$ and any symbol $a\in S^m$,
the $\hb$-scaled PDO $\Op_\hb(a): H^s(\TT^d)\to H^{s-m}(\TT^d)$ is a bounded operator. 
This fact is proven for $\RR^d$ in  Proposition 5.5 of \cite{MR2743652},
for which the proof can be easily modified to $\TT^d$.


The formula with $\hb=1$ in (\ref{fdefhPDO}) gives 
the definition of classical pseudo-differential operator $\Op(a)=\Op_1(a)$. We denote $\OP S^m=\OP_1 S^m$. 
In this way, the $\hb$-scaled PDO with semiclassical symbol $a\in S^m$ 
can be regarded as the classical PDO with symbol $a_\hb\in S^m$, 
that is, $\Op_\hb(a)=\Op(a_\hb)$, where $a_\hb(x,\xi)=a(x, \hb \xi; \hb)$. 

We see by \eqref{fFTransfID},
that if $a(x,\xi)=1$, then $\Op(a)=\Id$; and if $a(x,\xi)=i2\pi\xi_k$ for some $k=1, 2, \dots, d$, 
then $\disp \Op(a)=\frac{\partial}{\partial x_k}$.

\subsubsection{Fourier integral operators}

\begin{definition}\label{DefhFIO}
A (toroidal) $\hb$-scaled \emph{Fourier integral operator (FIO)}
$\Phi_\hb:\CDD(\TT^d)\to \CDD(\TT^d)$ with \emph{amplitude function $a\in S^m $}
and a real-valued \emph{phase function $S\in S^1$}
is of the form
\begin{equation*}\label{fdefhFIO}
\begin{split}
\Phi_\hb\varphi(x)=\Phi_\hb(a, S) \varphi(x)
=& \int_{T^*\TT^d} a(x, \xi) e^{i 2\pi\frac{1}{\hb}[S(x,\xi)-y\cdot \xi]} \varphi(y) dyd\left(\frac{\xi}{\hb}\right) \\
=& \int_{T^*\TT^d} a(x, \hb\xi) e^{i 2\pi[\frac{1}{\hb}S(x, \hb\xi)-y\cdot \xi]} \varphi(y) dyd\xi,
\end{split}
\end{equation*}
where the \emph{phase function} $S(x,\xi)$ satisfies the following conditions:
\begin{enumerate}
\item\label{cond regularity} there are $c_1, c_2>0$ such that $\left|\dfrac{\p S(x,\xi)}{ \p x}\right|\ge c_1|\xi|$ for all $(x,\xi)$ with $|\xi|\ge c_2$;
\item $S(x,\xi)$ is strongly non-degenerate, i.e., there is $c_3>0$ such that 
\begin{equation*}
\left|\det\left(\dfrac{\p^2 S(x,\xi)}{\p x\p \xi}\right)\right|\ge c_3 \quad \text{for any}\ (x,\xi)\in T^*\RR^d.
\end{equation*}
\end{enumerate}
\end{definition}

Note that the \emph{classical Fourier integral operator} $\Phi=\Phi_1$ 
is the one with $\hb=1$. 

By standard duality argument, we can extend $\Phi_\hb: \CDD'(\TT^d)\to \CDD'(\TT^d)$. 
Further, $\Phi_\hb: H^s(\TT^d)\to H^{s-m}(\TT^d)$ is a bounded operator if its amplitude $a\in S^m$. 
This fact is proven for $\RR^d$ in Proposition 3.1 of \cite{MR2243967},
for which the proof can be easily modified to $\TT^d$.

\begin{remark}

{\rm (i)} H\"ormander's definition of phase functions usually assumes the homogeneity of degree one in $\xi$. Following Egorov \cite{MR872855}, we replace the homogeneity by that $S\in S^1$ and Condition (\ref{cond regularity}).

{\rm (ii)} If we take $S(x,\xi)=x\cdot \xi$, then $\Phi_\hb(a, S)$ becomes 
an $\hb$-scaled pseudo-differential operator with the symbol $a$.
\end{remark}

\begin{definition} The canonical transformation associated to an $\hb$-scaled FIO 
with phase $S$ is the transformation $\CF_\hb: T^*\TT^d \to T^*\TT^d$
which sends $(x, \hb\xi)$ to $(y, \hb\eta)$, such that 
\begin{equation}\label{def canonical FIO}
y=\dfrac{\p S(x,\eta)}{\p \eta}, \qquad\ \xi=\dfrac{\p S(x,\eta)}{ \p x}. 
\end{equation}
In other words, the phase function $S$ serves as the generating function of the canonical transformation.
\end{definition}
In the classical case when $\hb=1$, we write $\CF=\CF_1$.

\subsection{Some Theorems}
Now we introduce some important theorems in semiclassical analysis
and write them in a form that we will use to prove 
Proposition~\ref{Pprop1} and \ref{Pprop2}.

\subsubsection{The symbol calculus}

If $m<m'$, then $S^m\subset S^{m'}$ and $\OP_\hb S^m\subset \OP_\hb S^{m'}$. Set $S^{-\infty}=\bigcap_{m\in \RR} S^m$. If $a\in S^{-\infty}$, then $\Op_\hb (a)$ is a smoothing (and hence compact) operator. 

Given two classical symbols $a, b\in S^m$, if the difference $a-b\in S^{m'}$ for some $-\infty\le m'<m$, we write $a=b \pmod{S^{m'}}$. 

Given two semiclassical symbols $a=a(x, \xi; \hb)$ and $b=b(x, \xi; \hb)$ in $S^m$, if 
\begin{equation*}
a(x, \xi; \hb)-b(x, \xi; \hb)=\hb^{m-m'} r(x, \xi; \hb)
\end{equation*}
for some $r\in S^{m'}$, we shall denote as $a=b \pmod {\hb^{m-m'} S^{m'}}$ for short. 

For classical PDOs, the following results are standard.

\begin{theorem}\label{Thm symbol cal} 
\begin{enumerate}
\item
Adjoint: If $A\in\OP S^m$ has a symbol $a$, then the adjoint operator $A^*\in \OP S^m$ has a symbol $a^*=\oa \pmod{S^{m-1}}$.
\item Composition: If $A\in\OP S^m$ has a symbol $a$ and $B\in \OP S^{m'}$ has a symbol $b$, then the compositions $A\circ B\in \OP S^{m+m'}$ has a symbol $a\#b=ab \pmod{ S^{m+m'-1}}$.
\item Inverse: If $A\in \OP S^m$ has an elliptic\footnote{
A symbol $a\in S^m$ is \emph{elliptic} if there are $c>0$ and $R>0$ such that 
$|a(x, \xi)|\ge c\lan \xi\ran^m$ for all $(x, \xi)$ with $|\xi|>R$.
} symbol $a$ and is invertible, then $A^{-1}\in\OP S^{-m}$ has a symbol $a^{-1} \pmod{ S^{-m-1}}$. 
\end{enumerate}
Moreover, the $\CN_k$-seminorm of all the remainders in the above modulo class only depends on the $\CN_{k+2}$-seminorm of the original symbols.
\end{theorem}

Recall that $\OP_\hb(a)$ can be regarded as $\OP(a_\hb)$, where $a_\hb(x,\xi)=a(x, \hb\xi)$. As a direct consequence of the above Theorem and standard symbol calculus,
we have the following rules of the symbol calculus for the $\hb$-scaled PDOs.

\begin{theorem}\label{Thm symbol cal hb} 
\begin{enumerate}
\item Adjoint: If $A\in\OP_\hb S^m$ has a symbol $a$, then the adjoint operator $A^*\in \OP_\hb S^m$ has a symbol 
$a^*=\oa \pmod{\hb S^{m-1}}.$
\item Composition: If $A\in\OP_\hb S^m$ has a symbol $a$ and $B\in \OP_\hb S^{m'}$ has a symbol $b$, then the compositions $A\circ B\in \OP_\hb S^{m+m'}$ has a symbol 
$a\#b= ab\pmod{\hb S^{m+m'-1}}.$
\item Inverse: If $A\in \OP_\hb S^m$ has an elliptic symbol $a$ and is invertible, then $A^{-1}\in \OP_\hb S^{-m}$ has a symbol $a^{-1} \pmod{ \hb S^{-m-1}}$.
\end{enumerate}
Moreover, if $\hb r$ is one of the remainders in the above modulo class, then the seminorm
$\CN_k(r)$ only depends on $\CN_{k+2}(a)$.
\end{theorem}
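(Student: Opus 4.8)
The plan is to reduce the three assertions to their classical counterparts in Theorem~\ref{Thm symbol cal} via the identification $\Op_\hb(a)=\Op(a_\hb)$, where $a_\hb(x,\xi):=a(x,\hb\xi)$, recorded just after Definition~\ref{DefPDO}, and then to track the powers of $\hb$ produced by this substitution. The structural fact that makes a factor of $\hb$ appear is the chain rule
\[
\p_x^\al\p_\xi^\beta(a_\hb)(x,\xi)=\hb^{|\beta|}\,(\p_x^\al\p_\xi^\beta a)(x,\hb\xi)=\hb^{|\beta|}\,\big((\p_x^\al\p_\xi^\beta a)\big)_\hb(x,\xi):
\]
every $\xi$-derivative landing on a scaled symbol yields one power of $\hb$. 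This is accompanied by two elementary observations: (a) products, reciprocals and $x$-derivatives commute with the scaling $a\mapsto a_\hb$, e.g. $(ab)_\hb=a_\hb b_\hb$ and $(a^{-1})_\hb=(a_\hb)^{-1}$; and (b) the seminorms of an $\hb$-\emph{independent} symbol are, of course, independent of $\hb$, which is the key point for the uniform control claimed at the end of the theorem.

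\textbf{The three rules.} For the adjoint, if $A=\Op_\hb(a)=\Op(a_\hb)$ with $a\in S^m$, then Theorem~\ref{Thm symbol cal}\eqref{symbol cal adjoint} gives $A^*=\Op(a^*_\hb)$ with symbol $a^*_\hb=\overline{a_\hb}\pmod{S^{m-1}}$. Using the expansion in Remark~\ref{rem symbol cal} applied to $a_\hb$, every term beyond $\overline{a_\hb}$ contains at least one factor $\p_\xi^\al\overline{a_\hb}$ with $|\al|\ge 1$, hence at least one power of $\hb$; collecting it, $a^*_\hb=\overline{a_\hb}+\hb\,s_\hb$ where $s\in S^{m-1}$ is a finite sum of products and oscillatory integrals of the $\p_x^\al\p_\xi^\beta a$ (with $|\al|+|\beta|\le k+2$ when one wants to bound $\CN_k$), carrying \emph{no} residual $\hb$-scaling. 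Since $\overline{a_\hb}=(\overline a)_\hb$, this reads $a^*_\hb=(\overline a)_\hb+\hb\,s_\hb$; undoing the scaling (replacing $\xi$ by $\xi/\hb$) gives $a^*=\overline a+\hb\,s$, i.e. $a^*=\overline a\pmod{\hb S^{m-1}}$, with $\CN_k(s)$ depending only on $\CN_{k+2}(a)$. Composition and inverse are handled in exactly the same way: apply $\Op(a_\hb)\Op(b_\hb)=\Op(a_\hb\#b_\hb)$ with $a_\hb\#b_\hb=a_\hb b_\hb\pmod{S^{m+m'-1}}$, respectively $\Op(a_\hb)^{-1}=\Op\big(a_\hb^{-1}\pmod{S^{-m-1}}\big)$ when $a>0$ is invertible, observe that every remainder term of the classical calculus carries a $\xi$-derivative on a scaled symbol and hence a factor $\hb$, factor that $\hb$ out in the $\xi$-variable, use $a_\hb b_\hb=(ab)_\hb$ and $a_\hb^{-1}=(a^{-1})_\hb$, and un-scale.

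\textbf{Uniform seminorm control.} Because the remainder $r$ (after pulling out the single $\hb$ and un-scaling) is, by the explicit formulas of Remark~\ref{rem symbol cal} and their analogues for composition and inversion, assembled from finitely many derivatives $\p_x^\al\p_\xi^\beta a$ and $\p_x^\al\p_\xi^\beta b$ with $|\al|+|\beta|\le k+2$, and carries no $\hb$-scaling itself, its $\CN_k$-seminorm is bounded in terms of $\CN_{k+2}(a)$ (and $\CN_{k+2}(b)$) alone, uniformly in $\hb\in(0,1]$. This is the statement in the last line of Theorem~\ref{Thm symbol cal hb}.

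\textbf{Main obstacle.} There is no conceptual difficulty; the work is bookkeeping. The one point that needs care is confirming that after the substitution $\xi\mapsto\hb\xi$ the classical remainder is genuinely a \emph{single} power of $\hb$ times a symbol of the advertised order with $\hb$-free seminorms — in particular that no negative powers of $\hb$ re-enter through the $\langle\xi\rangle$-weights when an intermediate symbol has negative order. This is ensured by first factoring $\hb$ out in the $\xi$-variable (so that what remains is an $\hb$-independent symbol evaluated at $\hb\xi$, whose seminorms are those of the $\hb$-independent symbol) rather than estimating $\langle\hb\xi\rangle$ against $\langle\xi\rangle$ term by term, together with the fact that the remainder expansions place all $\xi$-derivatives on the symbols and not on the exponential phase, so every term is manifestly $O(\hb)$ in the appropriate symbol class.
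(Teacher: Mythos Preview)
Your proposal is correct and takes exactly the approach the paper does: the paper gives no separate proof of Theorem~\ref{Thm symbol cal hb} at all, simply declaring it ``a direct consequence of Theorem~\ref{Thm symbol cal} and Remark~\ref{rem symbol cal}'' via the identification $\Op_\hb(a)=\Op(a_\hb)$. Your write-up supplies the bookkeeping (chain rule $\p_\xi^\beta(a_\hb)=\hb^{|\beta|}(\p_\xi^\beta a)_\hb$, factoring the single power of $\hb$, un-scaling) that the paper leaves implicit.
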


\subsubsection{Egorov's Theorem}

Let $\Om$ be an open domain in $\TT^d$. We say that a symbol $a\in S^m$ is supported in $\Om\times \RR^d$ if $a(x,\xi)=0$ for any $(x,\xi)\in (\TT^d\backslash \Om)\times \RR^d$. 
The class of such symbols is denoted by $S^m(\Om\times \RR^d)$.

We first state the original version of classical Egorov's theorem in \cite{MR872855} for the invertible case.

\begin{theorem}\label{ThmEgorov1} Let $A\in \OP S^m$ with symbol $a\in S^m(\Om\times \RR^d)$, and $\Phi$ be a classical FIO with amplitude $b\in S^0$ and phase $S$. Let $\CF(x,\xi)=(y,\eta)$ be the canonical transformation associated to $\Phi$ (as defined in (\ref{def canonical FIO}) with $\hb=1$), and $\Om'$ be the image of $\Om\times \RR^d$ under the first $d$ components of $\CF$. We assume that 
$\CF: \Om\times \RR^d\to \Om'\times \RR^d$ is a bijective map. Then the operator $\Phi^* A \Phi\in \OP S^m$ has a symbol $\wa\in S^m(\Om'\times \RR^d)$ such that
$$\wa(y,\eta)=\wa(\CF(x,\xi))= a(x,\xi) |b(x,\xi)|^2 \left|\det \left(\dfrac{\p^2 S}{\p x\p \xi}\right) \right|^{-1} \pmod{S^{m-1}}.
$$
\end{theorem}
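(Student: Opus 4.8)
The plan is to compute the composition $\Phi^* A \Phi$ directly as an oscillatory integral and extract its symbol via a stationary phase argument. First I would write out $\Phi^* A \Phi \varphi(y)$ as a multiple integral in the variables coming from each factor: applying $\Phi$ brings in an integration variable $x_1$ and momentum $\xi_1$ with phase $S(x_1,\xi_1)-x_1'\cdot\xi_1$; applying $A$ brings in $x'$ and $\eta_1$; and applying $\Phi^*$ brings in another pair. The adjoint $\Phi^*$ has, by Theorem~\ref{Thm symbol cal}(\ref{symbol cal adjoint}) applied in the FIO setting (or directly from the definition of the formal adjoint), amplitude $\overline{b}$ and the "inverse" phase. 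Collecting all phases, the combined oscillatory integral has a phase that is stationary precisely along the graph of the canonical transformation $\CF$, by Condition (2) in Definition~\ref{Def hFIO} (strong non-degeneracy of the mixed Hessian $\p^2 S/\p x\p\xi$), which guarantees the critical point is nondegenerate.

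Second, I would apply the stationary phase method (equivalently, the method of non-stationary phase to discard the off-diagonal contribution, then stationary phase on the remaining piece). The non-degeneracy of $\det(\p^2 S/\p x\p\xi)$ ensures that after integrating out the "fast" variables, the leading term is the value of $a(x,\xi)|b(x,\xi)|^2$ at the critical point, divided by the Jacobian factor $|\det(\p^2 S/\p x\p\xi)|$ coming from the Hessian determinant in the stationary phase formula. The hypothesis that $\CF:\Om\times\RR^d\to\Om'\times\RR^d$ is a bijection is what lets us globally change variables and conclude that the resulting symbol $\wa$ is well-defined on $\Om'\times\RR^d$ and supported there; the support statement follows because $a$ is supported in $\Om\times\RR^d$ and $\CF$ carries this set to $\Om'\times\RR^d$. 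The error term is one order lower, i.e. in $S^{m-1}$, which is the content of the $\pmod{S^{m-1}}$ claim.

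Third, I would verify the symbol estimates: that $\wa\in S^m$ and that the remainder lies in $S^{m-1}$. This is bookkeeping — one differentiates the oscillatory integral under the integral sign, uses that $b\in S^0$, $a\in S^m$, $S\in S^1$ with the lower bound $|\p S/\p x|\ge c_1|\xi|$ for large $\xi$ (Condition (1)), and that the mixed Hessian is bounded below, to obtain the required decay in $\eta$. The phase-function regularity conditions are exactly what make all these integrals genuine symbols rather than mere distributions.

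The main obstacle I expect is the careful treatment of the composite phase function and the justification that its only critical points lie along $\CF$, together with controlling the oscillatory integral uniformly in the momentum variable (since these are not compactly supported in $\xi$). This requires splitting the integral into a region near the critical set — where stationary phase applies — and a complementary region — where repeated integration by parts using Condition (1) produces rapid decay, contributing only to $S^{-\infty}$. One must also keep track of the fact that $\Phi^*$ is only a classical FIO up to lower-order corrections, so its amplitude is $\overline{b}$ only modulo $S^{-1}$; feeding this through the composition introduces additional $S^{m-1}$ errors, consistent with the stated conclusion. Since this is the original result of Egorov~\cite{MR872855}, I would ultimately cite that reference for the full technical details, sketching only the structure above.
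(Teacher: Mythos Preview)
The paper does not prove Theorem~\ref{ThmEgorov1} at all: it is stated as a quoted result and attributed to Egorov~\cite{MR872855}, with only Remark~\ref{rem Egorov} added to record the seminorm dependence of the remainder. Your proposal correctly recognizes this and ends by deferring to~\cite{MR872855} for the full argument, so in that sense your approach and the paper's coincide.

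The sketch you give (write $\Phi^* A \Phi$ as an oscillatory integral, locate the stationary set on the graph of $\CF$ using the non-degeneracy of $\p^2 S/\p x\p\xi$, apply stationary phase to extract the leading symbol $a|b|^2|\det(\p^2 S/\p x\p\xi)|^{-1}$, and control the remainder via integration by parts) is the standard route and is consistent with what one finds in~\cite{MR872855}. One small correction: you invoke Theorem~\ref{Thm symbol cal}(\ref{symbol cal adjoint}) for the amplitude of $\Phi^*$, but that theorem is about adjoints of PDOs, not FIOs; the adjoint of an FIO is obtained directly from the integral kernel, and its amplitude is $\overline{b}$ composed with the change of variables induced by $S$, not simply $\overline{b}$ modulo $S^{-1}$. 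This does not affect the outcome, since the stationary-phase computation absorbs that change of variables, but the justification you cite is the wrong one.
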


\begin{remark}\label{rem Egorov}
From the proof in \cite{MR872855}, it is easy to see that the $\CN_k$-seminorm of the remainder in the above modulo class only relies on the $\CN_{k+2}$-seminorms of $a, b$ and the $\CN_{k+4}$-seminorm of $S$. 
\end{remark}

For our purpose, we need the following version of Egorov's theorem.

\begin{theorem}\label{ThmEgorov2} Let $A\in \OP S^m$ with symbol $a\in S^m$, and $\Phi$ be a classical FIO with amplitude $b\in S^0$ and phase $S$. Let $\CF(x,\xi)=(y,\eta)$ be the canonical transformation associated to $\Phi$. We assume that $\CF$ is a surjective local diffeomorphism of $T^*\TT^d$ with finite inverse branches. Moreover, for each $x\in \TT^d$, the map $\xi\mapsto \CF(x,\xi)$ is bijective.
Then the operator $\Phi^* A \Phi\in \OP S^m$ has a symbol $\wa$ such that
\begin{equation}\label{Egorov symbol}
\wa(y,\eta)=\sum_{\CF(x,\xi)=(y,\eta)} a(x,\xi) |b(x,\xi)|^2 \left|\det \left(\dfrac{\p^2 S}{\p x\p \xi}\right) \right|^{-1} \pmod{S^{m-1}}.
\end{equation}
Moreover, the $\CN_k$-seminorm of the remainder in the above modulo class only relies on the $\CN_{k+2}$-seminorms of $a, b$ and the $\CN_{k+4}$-seminorm of $S$. 
\end{theorem}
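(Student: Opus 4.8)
\textbf{Proof proposal for Theorem~\ref{ThmEgorov2}.}

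The plan is to reduce the general (multi-branch) statement to the invertible case already recorded as Theorem~\ref{ThmEgorov1} by means of a partition of unity on the base $\TT^d$, and then to reassemble the pieces. First I would invoke the hypothesis that $\CF$ is a surjective local diffeomorphism of $T^*\TT^d$ with finitely many inverse branches: this gives, for each point of $\TT^d$, a neighborhood over which $\CF$ splits into finitely many diffeomorphic sheets. By compactness of $\TT^d$, choose a finite open cover $\{\Om_j'\}_{j}$ of the ``target'' base together with a subordinate smooth partition of unity $\{\chi_j\}$, $\sum_j \chi_j\equiv 1$, each $\chi_j$ supported in $\Om_j'$ and small enough that $\CF^{-1}(\Om_j'\times\RR^d)$ is a disjoint union of sets $\Om_{j,k}\times\RR^d$ on each of which $\CF$ restricts to a bijection onto $\Om_j'\times\RR^d$ (here the hypothesis that $\xi\mapsto\CF(x,\xi)$ is bijective for each fixed $x$ is exactly what guarantees each sheet fibers correctly over $\RR^d$ and that $\Om_{j,k}\times\RR^d$ is the full preimage sheet). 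Writing $\chi_j^\Phi$ for the pullback $\chi_j\circ(\text{first }d\text{ components of }\CF)$, which is a symbol in $S^0$ supported in $\bigsqcup_k \Om_{j,k}\times\RR^d$, one has $\sum_j \Op(\chi_j^\Phi)=\Op(\sum_j\chi_j^\Phi)=\Op(1)=\Id \pmod{S^{-\infty}}$ by Theorem~\ref{Thm symbol cal}(2), so it suffices to analyze $\Phi^* A \Op(\chi_j^\Phi) \Phi$ for each $j$ and sum.

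Next I would further localize each such term over the individual sheets: decompose $\chi_j^\Phi=\sum_k \chi_{j,k}^\Phi$ with $\chi_{j,k}^\Phi$ supported in $\Om_{j,k}\times\RR^d$ (simply the restriction of $\chi_j^\Phi$ to that sheet, which is smooth since the sheets are disjoint). By the composition calculus, $A\,\Op(\chi_{j,k}^\Phi)\in\OP S^m$ has symbol $a\,\chi_{j,k}^\Phi \pmod{S^{m-1}}$, hence a symbol supported in $\Om_{j,k}\times\RR^d$. Now I am in the setting of Theorem~\ref{ThmEgorov1}: on $\Om_{j,k}$ the canonical transformation $\CF$ is a bijection onto $\Om_j'\times\RR^d$, so $\Phi^*\big(A\,\Op(\chi_{j,k}^\Phi)\big)\Phi\in\OP S^m$ has symbol
\begin{equation*}
a(x,\xi)\,\chi_{j,k}^\Phi(x,\xi)\,|b(x,\xi)|^2\,\left|\det\left(\dfrac{\p^2 S}{\p x\p\xi}\right)\right|^{-1}\pmod{S^{m-1}},
\end{equation*}
where on the right $(x,\xi)$ is the unique point of $\Om_{j,k}\times\RR^d$ with $\CF(x,\xi)=(y,\eta)$. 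Summing over $k$ collapses $\sum_k \chi_{j,k}^\Phi$ to $\chi_j^\Phi$ at the point $(x,\xi)=\CF^{-1}(y,\eta)$ on each branch, and then summing over $j$ uses $\sum_j \chi_j\circ(\cdot)=1$ at each of the finitely many preimages $\CF(x,\xi)=(y,\eta)$ to produce exactly \eqref{Egorov symbol}. The seminorm bookkeeping propagates: the remainders from Theorem~\ref{Thm symbol cal} cost two extra derivatives, those from Theorem~\ref{ThmEgorov1} cost (by Remark~\ref{rem Egorov}) two more on $a,b$ and four on $S$, and the cutoff functions $\chi_j$ are fixed once and for all, so the $\CN_k$-seminorm of the total remainder depends only on $\CN_{k+2}(a)$, $\CN_{k+2}(b)$ and $\CN_{k+4}(S)$, as claimed.

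The main obstacle I expect is bookkeeping rather than conceptual: one must check that all the localizing symbols $\chi_j^\Phi$ and $\chi_{j,k}^\Phi$ genuinely lie in $S^0$ (this needs $S\in S^1$ together with Condition~\eqref{cond regularity} so that the map $x\mapsto$ (base component of $\CF(x,\xi)$) has derivatives controlled uniformly in $\xi$), and that the decomposition into sheets $\Om_{j,k}$ really exhausts $\CF^{-1}(\Om_j'\times\RR^d)$ with no stray branch escaping to infinity in $\xi$ — which is precisely where the per-$x$ bijectivity of $\xi\mapsto\CF(x,\xi)$ is used, ensuring the preimage of $\Om_j'\times\RR^d$ is a finite disjoint union of graphs over $\Om_j'\times\RR^d$. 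Once these two points are in place, the argument is a routine assembly from Theorem~\ref{ThmEgorov1} and the symbol calculus.
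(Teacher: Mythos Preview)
Your approach is correct in spirit and matches the paper's: both reduce to Theorem~\ref{ThmEgorov1} via a partition of unity on $\TT^d$ and then sum over the sheets. The paper, however, localizes on the \emph{source} side rather than the target: it takes a cover $\{\Om_i\}$ of $\TT^d$ with each $\Om_i\times\RR^d$ lying inside a single inverse branch of $\CF$, picks a subordinate partition $\sum_i\chi_i=1$, and simply sets $a_i(x,\xi)=\chi_i(x)\,a(x,\xi)$, so that $A=\sum_i A_i$ with $A_i=\Op(a_i)$ already having symbol supported in $\Om_i\times\RR^d$. Theorem~\ref{ThmEgorov1} then applies directly to each $\Phi^*A_i\Phi$, and summing gives \eqref{Egorov symbol} with no further work.

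This buys two simplifications over your route. First, you never need to verify that the pulled-back cutoffs $\chi_j^\Phi$ lie in $S^0$: the paper's $\chi_i$ are functions of $x$ alone and are trivially in $S^0$. Second, you avoid the composition $A\,\Op(\chi_{j,k}^\Phi)$ and the attendant worry that the full symbol of that product is not exactly supported in $\Om_{j,k}\times\RR^d$ (only its principal part is); in the paper's version the support is exact because $a_i=\chi_i a$ is a pointwise product, not an operator composition. Your argument can certainly be made rigorous with nested cutoffs, but the source-side partition sidesteps the issue entirely.
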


\begin{proof} By the properties of the canonical map $\CF$, we can choose an finite open cover $\{\Om_i\}$ of $\TT^d$ such that each $\Om_i\times \RR^d$ is strictly inside an inverse branch of $\CF$.
By partition of unity, there are $\chi_i\in C^\infty_0(\Om_i; [0, 1])$ such that $\sum_i \chi_i=1$. We define symbols $a_i\in S^m(\Om_i\times \RR^d)$ by  $a_i(x,\xi)=\chi_i(x) a(x,\xi)$, and set $A_i=\Op(a_i)$.  
By Theorem \ref{ThmEgorov1}, each $\Phi^* A_i\Phi\in \OP S^m$ has a symbol $\wa_i$ such that 
\begin{equation*}
\wa_i(y,\eta)= \chi_i(x) a(x,\xi) |b(x,\xi)|^2 \left|\det \left(\dfrac{\p^2 S}{\p x\p \xi}\right) \right|^{-1} \pmod{S^{m-1}}
\end{equation*}
for any $(y, \eta)\in \CF(\Om_i\times \RR^d)$ with the only pre-image $(x, \xi)$ in $\Om_i\times \RR^d$; and $\wa_i(y, \eta)=0$ if 
$(y, \eta)\not\in \CF(\Om_i\times \RR^d)$.
Therefore,
\begin{equation*}
\Phi^* A \Phi = \Phi^* (\sum_{i}  A_i) \Phi= \sum_{i} \Phi^* A_i \Phi \in \OP S^m,
\end{equation*}
and its symbol $\wa(y, \eta)$ is given by
\begin{eqnarray*}
\sum_i \wa_i(y, \eta)&=&\sum_{\CF(x,\xi)=(y,\eta)} \sum_{i: x\in \Om_i} \chi_i(x) a(x,\xi) |b(x,\xi)|^2 \left|\det \left(\dfrac{\p^2 S}{\p x\p \xi}\right) \right|^{-1} \pmod{S^{m-1}}\\
&=& \sum_{\CF(x,\xi)=(y,\eta)} a(x,\xi) |b(x,\xi)|^2 \left|\det \left(\dfrac{\p^2 S}{\p x\p \xi}\right) \right|^{-1} \left(\sum_{i: x\in \Om_i} \chi_i(x) \right) \pmod{S^{m-1}} \\
&=& \sum_{\CF(x,\xi)=(y,\eta)} a(x,\xi) |b(x,\xi)|^2 \left|\det \left(\dfrac{\p^2 S}{\p x\p \xi}\right) \right|^{-1} \pmod{S^{m-1}}.
\end{eqnarray*}
The seminorm dependence of the remainder is straightforward by Remark \ref{rem Egorov}.
\end{proof}

\begin{remark}\label{RmkEgorov}
We can easily adapt the proof of the above two theorems 
for the $\hb$-scaled situation and show that 
if $A\in \OP_\hb S^m$ has a symbol $a\in S^m$ 
and $\Phi_\hb$ is the $\hb$-scaled FIO with amplitude $b\in S^0$ and phase $S$,
then the symbol of $\Phi_\hb^* A\Phi_\hb\in \OP_\hb S^m$ is still given by (\ref{Egorov symbol}) but with $\CF(x,\xi)=(y, \eta)$ replaced by $\CF_\hb(x, \hb\xi)=(y, \hb\eta)$, and $\pmod{S^{m-1}}$ replaced by $\pmod{\hb S^{m-1}}$. Moreover, if $\hb r$ is the remainder in the modulo class, then $\CN_k(r)$ only depends on the $\CN_{k+2}$-seminorms of $a, b$ and the $\CN_{k+4}$-seminorm of $S$. 
\end{remark}

\subsubsection{$L^2$-Continuity }

The following result is the classical Calderon-Vaillancourt Theorem (see Theorem 2.8.1 in \cite{MR1872698} for instance).

\begin{theorem}[Calderon-Vaillancourt]
 Let $a(x,\xi)\in S^0$, then $\Op(a): L^2(\TT^d)\to L^2(\TT^d)$ is a bounded operator such that 
$$\|\Op(a)\|_{L^2\to L^2}\le M_1 \|a\|_{C^{k_1}}\le M_1 \CN_{k_1}(a),$$
for some $M_1>0$ and $k_1\in \NN$ that only depend on the dimension $d$.
\end{theorem}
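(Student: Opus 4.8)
This is the classical Calder\'on--Vaillancourt theorem; we indicate the standard argument and refer to \cite{MR1872698, MR2744150, MR2952218} for complete details. First, identifying $T^*\TT^d$ with $[0,1)^d\times\RR^d$, the operator $\Op(a)$ on $\TT^d$ is simply the Euclidean pseudo-differential operator $\Op^{\Eu}(a):L^2(\RR^d)\to L^2(\RR^d)$ (with $a$ extended $1$-periodically in $x$, which keeps it in $S^0(\RR^{2d})$) restricted to functions supported in the fundamental domain; hence $\|\Op(a)\|_{L^2(\TT^d)\to L^2(\TT^d)}\le\|\Op^{\Eu}(a)\|_{L^2(\RR^d)\to L^2(\RR^d)}$, and it suffices to bound the latter by $M_1\|a\|_{C^{k_1}}$. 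The second inequality in the statement is then automatic, since $\CN_{\al\beta,0}(a)\ge\sup_{(x,\xi)}|\p_x^\al\p_\xi^\beta a(x,\xi)|$ gives $\CN_{k_1}(a)\ge\|a\|_{C^{k_1}}$.

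The plan is to invoke the Cotlar--Stein almost--orthogonality lemma: if $\{T_\mu\}_{\mu\in\ZZ^{2d}}$ are bounded operators on a Hilbert space with $\|T_\mu^*T_\nu\|,\|T_\mu T_\nu^*\|\le\om(\mu-\nu)$ for some $\om:\ZZ^{2d}\to[0,\infty)$ with $A:=\sum_\mu\sqrt{\om(\mu)}<\infty$, then $\sum_\mu T_\mu$ converges strongly with $\|\sum_\mu T_\mu\|\le A$. Choose a smooth partition of unity $\sum_{\mu=(\mu_1,\mu_2)\in\ZZ^{2d}}\chi_\mu\equiv1$ on phase space $\RR^d_x\times\RR^d_\xi$ subordinate to the covering by unit boxes centered at the lattice points, with all derivatives bounded uniformly in $\mu$, set $a_\mu=\chi_\mu a$ and $T_\mu=\Op^{\Eu}(a_\mu)$, so $\Op^{\Eu}(a)=\sum_\mu T_\mu$; on $\supp a_\mu$ one has $\lan\xi\ran\sim\lan\mu_2\ran$, so $\sup|\p_x^\al\p_\xi^\beta a_\mu|\le C_{\al\beta}\|a\|_{C^{|\al|+|\beta|}}$ uniformly in $\mu$ (the gain $\lan\xi\ran^{-|\beta|}$ in the symbol estimate only helps). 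Writing the Schwartz kernel of $T_\mu^*T_\nu$ as an oscillatory integral and integrating by parts --- in the frequency variables this produces decay in $|\mu_2-\nu_2|$, because $T_\mu$ and $T_\nu$ carry kernels oscillating at the separated spatial frequencies $\mu_2$ and $\nu_2$; in the spatial variables it produces decay in $|\mu_1-\nu_1|$ once one accounts for the $O(1)$ spread of the kernels --- one gets a Schur-type kernel bound and hence $\|T_\mu^*T_\nu\|_{L^2\to L^2}\le C_N\|a\|_{C^{k_1}}^2\lan\mu-\nu\ran^{-N}$ for every $N$, and likewise for $T_\mu T_\nu^*$, with $C_N,k_1$ depending only on $N$ and $d$. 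Taking $N=4d+1$, the function $\om(\mu)=C_N\|a\|_{C^{k_1}}^2\lan\mu\ran^{-N}$ satisfies $\sum_\mu\sqrt{\om(\mu)}\le M_1\|a\|_{C^{k_1}}$, and Cotlar--Stein yields $\|\Op^{\Eu}(a)\|\le M_1\|a\|_{C^{k_1}}$, where $k_1$ is the finite number of derivatives consumed by the integration by parts.

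The main obstacle is precisely this last almost--orthogonality estimate: one must carry out the non-stationary phase analysis so that the implied constants are controlled by a \emph{fixed} $C^{k_1}$-norm of $a$, uniformly in $\mu,\nu$, which forces one to track exactly how many symbol derivatives are used --- this is what pins down the value of $k_1$. An alternative route avoiding phase-space localization is the $T^*T$ bootstrap: prove the bound first for $a\in S^{-d-1}$, where the Schwartz kernel is an honest function, bounded in $x$ and integrable in $x-y$, so Schur's test applies directly; then, after a routine regularization ensuring boundedness a priori, use Theorem~\ref{Thm symbol cal} to write $\Op(a)^*\Op(a)=\Op(b)$ with $b=|a|^2\pmod{S^{-1}}$, whence $\|\Op(a)\|^2=\|\Op(b)\|$, and iterate $\|\Op(a)\|^{2^n}=\|\Op(b_n)\|$ for symbols $b_n$ of order tending to $-\infty$, letting $n\to\infty$ once $b_n\in S^{-d-1}$. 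There the difficulty shifts to keeping the seminorms $\CN_k(b_n)$ under uniform control through the iteration, which is possible because the remainders in the symbol calculus depend only on a bounded number of seminorms of the inputs (Theorem~\ref{Thm symbol cal}).
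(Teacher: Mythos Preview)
The paper does not prove this theorem at all; it is stated as the classical Calder\'on--Vaillancourt theorem with a reference to Theorem~2.8.1 in \cite{MR1872698}, and the proof is entirely outsourced to the literature. Your sketch of the standard Cotlar--Stein argument (and the alternative $T^*T$ bootstrap) is correct and is exactly what one finds in the references you cite, so there is nothing in the paper to compare against.

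One small remark on your reduction step: the inequality $\|\Op(a)\|_{L^2(\TT^d)\to L^2(\TT^d)}\le\|\Op^{\Eu}(\tP a)\|_{L^2(\RR^d)\to L^2(\RR^d)}$ is indeed immediate once you observe that for $\varphi$ supported in $[0,1)^d$ and $x\in[0,1)^d$ one has $\Op(a)\varphi(x)=\Op^{\Eu}(\tP a)\varphi(x)$ pointwise (the $y$-integral localizes to $[0,1)^d$ by the support of $\varphi$, and $\tP a=a$ there), so restricting to the fundamental domain can only decrease the $L^2$ norm. This matches the paper's own identification in the remark following Definition~\ref{DefPDO} and in Section~\ref{SSToroidal}.
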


To get finer $L^2$-norm estimates, 
we first state a version of $L^2$-continuity for a classical PDO of order 0 established in \cite{MR2461513}.

\begin{theorem}\label{ThmL2contl}
 If $a(x,\xi)\in S^0$, then $\Op(a):L^2(\TT^d)\to L^2(\TT^d)$ is a bounded operator. Moreover, 
 for any $\ve>0$, there is a decomposition
 $$ \Op(a)=K(\ve)+R(\ve)$$
 such that $K(\ve):L^2(\TT^d)\to L^2(\TT^d)$ is a compact operator and 
 $$ \|R(\ve)\|_{L^2\to L^2}\le \sup_x\limsup_{|\xi|\to \infty} |a(x,\xi)|+\ve=\sup_x\limsup_{|\xi|\to \infty} |a_0(x,\xi)|+\ve, $$
where $a_0\in S^0$ is such that $a=a_0 \pmod{S^{-1}}$.
 \end{theorem}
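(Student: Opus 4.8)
The plan is to reduce Theorem~\ref{ThmL2contl} to the Calderon-Vaillancourt estimate (Theorem~\ref{Thm Cald-Vail}) applied to a suitably truncated symbol. First I would fix $\ve>0$ and set $M=\sup_x \limsup_{|\xi|\to\infty} |a(x,\xi)|$; by the uniform symbol bounds, for $|\xi|\ge R$ (with $R$ depending on $\ve$) one has $|a(x,\xi)|\le M+\ve/(2M_1)$ uniformly in $x$, where $M_1$ is the Calderon-Vaillancourt constant from Theorem~\ref{Thm Cald-Vail}. Choose a smooth cutoff $\chi\in C^\infty_0(\RR^d; [0,1])$ with $\chi\equiv 1$ on $\{|\xi|\le R\}$ and $\chi\equiv 0$ on $\{|\xi|\ge 2R\}$, and split
\begin{equation*}
a(x,\xi)=\chi(\xi) a(x,\xi) + (1-\chi(\xi)) a(x,\xi) =: a_c(x,\xi)+a_\infty(x,\xi).
\end{equation*}
Then set $K(\ve)=\Op(a_c)$ and $R(\ve)=\Op(a_\infty)$.

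The second step is to verify the two required properties. For compactness of $K(\ve)$: the symbol $a_c$ is compactly supported in $\xi$, hence lies in $S^{-\infty}=\bigcap_m S^m$, so by the remark following the definition of $S^{-\infty}$ in Section~\ref{SSsymbol cal}, $\Op(a_c)$ is a smoothing operator, in particular it maps $L^2(\TT^d)$ continuously into $H^1(\TT^d)$, and by the Rellich-type compact embedding $H^1(\TT^d)\hookrightarrow L^2(\TT^d)$ (Proposition~\ref{PSob space}(iii), which on the compact manifold $\TT^d$ is compact) it is compact on $L^2$. For the norm bound on $R(\ve)$: since $a_\infty$ is supported in $\{|\xi|\ge R\}$, the bound $|a(x,\xi)|\le M+\ve/(2M_1)$ there gives $\sup_{x,\xi}|a_\infty(x,\xi)|\le M+\ve/(2M_1)$; but Calderon-Vaillancourt in the form stated controls $\|\Op(a_\infty)\|_{L^2\to L^2}$ by $M_1$ times the $C^{k_1}$-norm of the symbol, which involves derivatives, not just the sup. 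The honest route is therefore to quote the sharp-constant Calderon-Vaillancourt theorem: the $L^2$ operator norm of a PDO with $S^0$ symbol exceeds $\sup|a|$ by at most a term controlled by finitely many derivatives of $a$ on the support, and by choosing $R$ large and rescaling (exploiting that $\p_\xi^\beta a_\infty$ decays like $\lan\xi\ran^{-|\beta|}$ on $\{|\xi|\ge R\}$, so these contributions are $O(R^{-1})$ small) one pushes the excess below $\ve/2$. Combining, $\|R(\ve)\|_{L^2\to L^2}\le M+\ve$. The final observation — that $M$ is unchanged if $a$ is replaced by any $a_0$ with $a=a_0 \pmod{S^{-1}}$ — is immediate since $a-a_0\in S^{-1}$ forces $|a(x,\xi)-a_0(x,\xi)|\le \CN_{0,-1}(a-a_0)\lan\xi\ran^{-1}\to 0$ as $|\xi|\to\infty$ uniformly in $x$.

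The main obstacle is the second point above: passing from the ``$C^{k_1}$-norm'' form of Calderon-Vaillancourt to a genuine $\sup|a|+\ve$ bound. Naively truncating and applying Theorem~\ref{Thm Cald-Vail} to $a_\infty$ gives a constant involving derivatives over the tail, which need not be small. The resolution is that on the tail $\{|\xi|\ge R\}$ the symbol estimates $|\p_x^\al \p_\xi^\beta a_\infty|\le C\lan\xi\ran^{-|\beta|}$ mean that after a dyadic decomposition of the $\xi$-tail into shells $\{|\xi|\sim 2^j R\}$ and a parabolic rescaling normalizing each shell, the rescaled symbols have $C^{k_1}$-norms uniformly comparable to their sup-norms up to $O((2^j R)^{-1})$ errors; summing the geometric series in $j$ and taking $R$ large absorbs the total error into $\ve/2$. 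This is exactly the refinement carried out in \cite{MR2461513}, so in the write-up I would invoke that reference for the sharp statement rather than reprove it, and spend the bulk of the argument on the clean reduction (cutoff, smoothing/compactness of $K(\ve)$, and the $\pmod{S^{-1}}$ invariance of the limsup).
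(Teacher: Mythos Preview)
The paper does not prove Theorem~\ref{ThmL2contl} at all: it is simply stated as a known result ``established in \cite{MR2461513}'' and then used. So there is no ``paper's own proof'' to compare against; your proposal goes considerably further than the paper does on this point.

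That said, your sketch is a correct and standard outline of how such a statement is proved, and you have identified the genuine issue precisely: splitting $a=a_c+a_\infty$ with a cutoff in $\xi$ and showing $\Op(a_c)$ is smoothing (hence compact on $L^2(\TT^d)$) is routine, and the $\pmod{S^{-1}}$ invariance of the limsup is immediate. The only nontrivial step is upgrading the crude Calderon--Vaillancourt bound $\|\Op(a_\infty)\|\le M_1\|a_\infty\|_{C^{k_1}}$ to the sharp form $\|\Op(a_\infty)\|\le \sup|a_\infty|+o(1)$ as $R\to\infty$, and you correctly diagnose that this requires exploiting the decay $|\p_\xi^\beta a_\infty|\lesssim \lan\xi\ran^{-|\beta|}$ on the tail. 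Your proposed resolution via dyadic shells and rescaling is one viable route; another is the H\"ormander square-root trick (as used in the proof of Theorem~\ref{ThmL2cont2} just below in the paper), applied to $a_\infty$ with the observation that the remainder terms in the symbol calculus land in $S^{-1}$ and hence are $O(R^{-1})$ on $\{|\xi|\ge R\}$. Either way, since you conclude by invoking \cite{MR2461513} for the sharp statement, your write-up and the paper's treatment coincide in practice: both cite the result rather than prove it in full.

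One small point worth making explicit in your reduction: the passage from the pointwise hypothesis $\sup_x\limsup_{|\xi|\to\infty}|a(x,\xi)|=M$ to the uniform statement ``$|a(x,\xi)|\le M+\delta$ for all $x$ once $|\xi|\ge R(\delta)$'' uses the equicontinuity in $x$ coming from the $S^0$ bound on $\p_x a$, together with the compactness of $\TT^d$; this is straightforward but should be said.
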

 
For an $\hb$-scaled PDO of order 0, we need a version of Calderon-Vaillancourt theorem, which applies not only for $\hb\to 0$ but for arbitrary $\hb\in (0, 1]$.
See similar statements in \cite{MR2952218}, Theorem 4.23 or Theorem 5.1 in the formulation of Weyl quantization.

\begin{theorem}\label{ThmL2cont2} If $a(x,\xi)\in S^0$, then $\Op_\hb(a):L^2(\TT^d)\to L^2(\TT^d)$ is a bounded operator. 
Moreover, if $a=a_0+\hb r$ for some $a_0\in S^0$ and $r\in S^{-1}$, then for any $\ve>0$, we have
$$ \|\Op_\hb(a)\|_{L^2\to L^2}\le \sup_{(x,\xi)\in T^*\TT^d} |a_0(x,\xi)| + \ve + \hb C_{k_2}(\ve, a_0, r),
$$
where the constant $C_{k_2}(\ve, a_0, r)$ only depends on $\ve$ and 
the $\CN_{k_2}$-seminorms of $a_0$ and $r$, for some $k_2\in \NN$ that only depends on the dimension $d$.
\end{theorem}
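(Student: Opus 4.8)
This is a sharp, $\hb$-uniform version of the Calder\'on--Vaillancourt bound; the $L^2$-boundedness of $\Op_\hb(a)$ uniformly in $\hb\in(0,1]$ will follow at once by writing $\Op_\hb(a)=\Op(a_\hb)$ with $a_\hb(x,\xi)=a(x,\hb\xi)$, observing that $\CN_{k_1}(a_\hb)\le \CN_{k_1}(a)$ for every $\hb\in(0,1]$ (since $\hb^{|\beta|}\lan\xi\ran^{|\beta|}\lan\hb\xi\ran^{-|\beta|}\le 1$), and invoking Theorem~\ref{Thm Cald-Vail}; the real content is the sharp leading constant. The plan is to reduce the norm bound for $\Op_\hb(a)$ to a bound for the self-adjoint nonnegative operator $\Op_\hb(a)^*\Op_\hb(a)$. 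By Theorem~\ref{Thm symbol cal hb} this operator equals $\Op_\hb(a^*\#a)$, and, feeding $a=a_0+\hb r$ through the adjoint and composition rules, one gets
\begin{equation*}
a^*\#a=|a_0|^2+\hb\,\sigma,\qquad \sigma\in S^0,
\end{equation*}
where $\CN_k(\sigma)$ is bounded in terms of $\CN_{k+4}(a_0)$ and $\CN_{k+4}(r)$ only: the $\#$- and $*$-remainders contribute $\CN_{k+2}$-level dependence by Theorem~\ref{Thm symbol cal hb}, the cross terms $\bar a_0 r$ and $\hb|r|^2$ the rest, and $\CN_k(a)\le \CN_k(a_0)+\CN_k(r)$.

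Next, write $M:=\sup_{(x,\xi)}|a_0(x,\xi)|=\CN_0(a_0)$. The symbol $M^2-|a_0|^2$ is nonnegative, lies in $S^0$, and has $\CN_k$-seminorms controlled by those of $a_0$. The semiclassical sharp G{\aa}rding inequality --- valid for all $\hb\in(0,1]$ with constant depending only on finitely many seminorms of the symbol (see \cite{MR2952218}, \cite{MR1872698}), and which transfers to $\TT^d$ via the periodic Euclidean picture of Section~\ref{SSSemi torus} or, alternatively, by a partition of unity exactly as in the proof of Theorem~\ref{ThmEgorov2} --- then gives
\begin{equation*}
\lan \Op_\hb(M^2-|a_0|^2)u,u\ran\ \ge\ -\hb\,C_1\,\|u\|^2,\qquad u\in L^2(\TT^d),
\end{equation*}
with $C_1=C_1(\CN_{k_2}(a_0))$. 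Combining this with $|\lan\hb\,\Op_\hb(\sigma)u,u\ran|\le\hb\,\|\Op_\hb(\sigma)\|_{L^2\to L^2}\|u\|^2\le\hb\,C_2\|u\|^2$ (where $C_2=M_1\CN_{k_1}(\sigma)$, by Theorem~\ref{Thm Cald-Vail} after $\hb$-rescaling) and with the identity $M^2\Id-\Op_\hb(a)^*\Op_\hb(a)=\Op_\hb(M^2-|a_0|^2)-\hb\,\Op_\hb(\sigma)$, I obtain
\begin{equation*}
\|\Op_\hb(a)u\|^2=\lan\Op_\hb(a^*\#a)u,u\ran\ \le\ \big(M^2+\hb\,C_3\big)\|u\|^2,
\end{equation*}
i.e. $\|\Op_\hb(a)\|_{L^2\to L^2}^2\le M^2+\hb\,C_3$, with $C_3=C_1+C_2$ depending only on finitely many seminorms of $a_0$ and $r$ and $k_2$ depending only on $d$.

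It remains to unwind this into the stated form. If $M=0$ then $a_0\equiv 0$, so $\Op_\hb(a)=\hb\,\Op_\hb(r)$ and $\|\Op_\hb(a)\|_{L^2\to L^2}\le\hb\,M_1\CN_{k_1}(r)$, already of the required shape. If $M>0$, then $\sqrt{M^2+\hb C_3}\le M+\hb C_3/(2M)$, so the theorem holds with $C_{k_2}(a_0,r)=C_3(a_0,r)/\big(2\,\CN_0(a_0)\big)$ (and $C_{k_2}=M_1\CN_{k_1}(r)$ when $\CN_0(a_0)=0$), a quantity depending only on the $\CN_{k_2}$-seminorms of $a_0$ and $r$. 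The crux --- and the place most likely to go wrong in a naive attempt --- is getting an $O(\hb)$ and not merely an $O(\hb^{1/2})$ correction: bounding $\lan\Op_\hb(M^2-|a_0|^2)u,u\ran$ from below via Theorem~\ref{Thm Cald-Vail} alone, or via a PDO factorization using the regularized square root $\sqrt{M^2-|a_0|^2+\ve}$, yields only $\|\Op_\hb(a)\|_{L^2\to L^2}\le M+o(1)$ as $\hb\to0$ (the square-root factorization gives $M+O(\hb^{\theta})$ for some small $\theta\in(0,1)$ after optimizing $\ve$), so the sharp G{\aa}rding inequality --- whose Friedrichs-symmetrization proof builds a nonnegative operator without extracting any square root --- is essential. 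A minor consequence to keep in mind is that $C_{k_2}$ then carries $\CN_0(a_0)=\sup|a_0|$ in the denominator and may blow up as $\sup|a_0|\to0$; this is permitted by the statement, and is harmless in the applications, where $\hb$ is taken small only after the leading symbol $a_0$ has been fixed.
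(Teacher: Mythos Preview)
Your proof is correct, but it takes a different route from the paper's. The paper does \emph{not} invoke the sharp G{\aa}rding inequality; instead it uses H\"ormander's square-root trick directly: it fixes any $M>\sup|a_0|$, sets $b=\sqrt{M^2-|a_0|^2}\in S^0$ (smooth because the radicand is strictly positive), writes $a^*\#a=M^2-b^2+\hb r_1$, and then discards the nonnegative term $\|\Op_\hb(b)\varphi\|^2$ to obtain $\|\Op_\hb(a)\|\le M+\hb\,\|\Op_\hb(r_1)\|/(2M)$. The paper then argues that $r_1$ depends only on $a_0,r$ (not on $M$), so one may let $M\downarrow\sup|a_0|$.

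Your final paragraph therefore overstates the case: the square-root factorization is exactly what the paper uses, and it does yield an $O(\hb)$ remainder for each fixed $M>\sup|a_0|$, not merely $O(\hb^\theta)$. Your concern is valid only at the very last step --- passing from $M$ to $\sup|a_0|$ --- where one must check that the implicit correction coming from $\Op_\hb(b)^*\Op_\hb(b)-\Op_\hb(b^2)$ does not reintroduce $M$-dependence; the paper's write-up is terse on this point. Your sharp-G{\aa}rding argument sidesteps this issue cleanly by working with $M=\sup|a_0|$ from the start and never needing $b$ to be a genuine symbol. So: both approaches are standard, yours is arguably cleaner on the sharp constant, but sharp G{\aa}rding is not ``essential'' --- the paper gets by without it.
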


\begin{proof} Recall that $\Op_\hb(a)=\Op(a_\hb)$, 
where $a_\hb(x,\xi)=a(x, \hb\xi; \hb)\in S^0$, by Calderon-Vaillancourt Theorem 
stated above, $\Op_\hb(a)$ is a bounded operator on $L^2(\TT^d)$. 
For the operator norm estimate, 
we mimic the proof in Section 7.5 of \cite{MR2743652}, 
which is originally due to H\"ormander, and which is called ``square root trick". 

For any $\ve>0$, we set $M= \sup_{(x,\xi)\in T^*\TT^d} |a_0(x,\xi)| +\ve$, and $b=\sqrt{M^2-|a_0|^2}\in S^0$. By Theorem \ref{Thm symbol cal hb}, the operator $\Op_\hb(a)^*\Op_\hb(a)\in \OP_\hb S^0$ has a symbol
\begin{eqnarray*}
a^*\# a=\oa a\pmod{\hb S^{-1}} &=& |a_0|^2+2\hb \Re(a_0 r) +\hb^2 |r|^2\pmod{\hb S^{-1}}\\
&=& M^2- b^2\pmod{\hb S^{-1}},
\end{eqnarray*}
that is, $a^*\# a=M^2-b^2+\hb r_1$, for some $r_1\in S^{-1}$. 
Similarly, the operator $\Op_\hb(b)^*\Op_\hb(b)\in \OP_\hb S^0$ has a symbol
$b^*\# b=b^2 \pmod{\hb S^{-1}}$, that is, $b^*\# b=b^2+\hb r_2$, for some $r_2\in S^{-1}$.
Therefore, for any $\varphi\in L^2(\TT^d)$, 
\begin{eqnarray*}
\|\Op_\hb(a) \varphi\|_{L^2}^2&=& \lan \Op_\hb(a)^*\Op_\hb(a)\varphi, \varphi\ran_{L^2} \\
&=& M^2\|\varphi\|_{L^2}^2-\lan \Op_\hb(b^2)\varphi, \varphi\ran_{L^2} +\hb \lan \Op_\hb(r_1)\varphi, \varphi\ran_{L^2} \\
&=& M^2\|\varphi\|_{L^2}^2-\|\Op_\hb(b)\varphi\|_{L^2}^2+\hb \lan \Op_\hb(r_1+r_2)\varphi, \varphi\ran_{L^2} \\
&\le & (M^2 +\hb \|\Op_\hb(r_1+r_2)\|_{L^2\to L^2}) \|\varphi\|_{L^2}^2 \\
&\le & (M +\hb\cdot \frac{\|\Op_\hb(r_1+r_2)\|_{L^2\to L^2}}{2M})^2 \|\varphi\|_{L^2}^2.
\end{eqnarray*}
It remains to show that $\dfrac{\|\Op_\hb(r_1+r_2)\|_{L^2\to L^2}}{2M}$ has an upper bound that is related to 
$\ve$, $a_0$ and $r$. Indeed, by Calderon-Vaillancourt Theorem, 
\begin{eqnarray*}
\|\Op_\hb(r_1+r_2)\|_{L^2\to L^2}=\|\Op((r_1+r_2)_\hb)\|_{L^2\to L^2} 
\le M_1 \left[\CN_{k_1}(r_1)+\CN_{k_1}(r_2)\right].
\end{eqnarray*}
By the construction of $r_1$, we have that 
$ \CN_{k_1}(r_1)$ depends only on the $\CN_{k_1+2}$-seminorms of $a, a_0, r$, and thus only of $a_0, r$,
since $\CN_{k_1+2}(a)=\CN_{k_1+2}(a_0+\hb r)\le \CN_{k_1+2}(a_0)+\CN_{k_1+2}(r)$.
Similarly, $ \CN_{k_1}(r_2)$ depends only on $\ve$ and the $\CN_{k_1+2}$-seminorms of $b$, 
and thus only of $\ve$, $a_0$ and $r$.
In other words, let $k_2=k_1+2$, then 
\begin{equation*}
\dfrac{\|\Op_\hb(r_1+r_2)\|_{L^2\to L^2}}{2M}\le 
\dfrac{M_1 \left[\CN_{k_1}(r_1)+\CN_{k_1}(r_2)\right]}{2\sup_{(x,\xi)} |a_0(x,\xi)|+2\ve}\le C_{k_2}(\ve, a_0, r),
\end{equation*}
where $C_{k_2}(\ve, a_0, r)$ is a constant that only depends on $\ve$ and
the $\CN_{k_2}$-seminorms of $a_0$ and $r$.
This finishes the proof of the theorem.
\end{proof}

\section{Spectral Gap and Coboundary: Proof of the Theorems}

\subsection{Decomposition of Koopman operator}\label{Sec decomp Koopman}

Let $s<0$ be an arbitrary negative order.
Recall that $\CW^s=H^s(\TT^d)\otimes H^{-s}(\TT^\ell)$ is defined by \eqref{fdef space},
and the  Koopman operator $\wF$ acts on $\CW^s$.

We shall decompose the Koopman operator $\wF$
according to fiberwise Fourier expansion. More precisely,
given $\phi\in \CW^s$, 
we write the Fourier series expansion along $\TT^\ell$-direction as
$$ 
\phi(x,y)=\sum_{\bnu\in \ZZ^\ell} \phi_\bnu(x) e^{i2\pi \bnu\cdot y},
$$
where the Fourier coefficients are defined by
$$ 
\phi_\bnu(x)=\int_{\TT^\ell} \phi(x,y)e^{-i2\pi \bnu\cdot y} dy\in H^s(\TT^d),  \quad  \ \bnu\in \ZZ^\ell. $$
It is clear that the family of functions $\{e^{i2\pi \bnu\cdot y}\}_{\bnu\in \ZZ^\ell}$ forms an orthogonal basis of $H^{-s}(\TT^\ell)$, and $\|e^{i2\pi \bnu\cdot y}\|_{H^{-s}(\TT^\ell)}=\lan\bnu\ran^{-s}$.
Therefore, the Hilbert inner product on $\CW^s$ is given by 
\begin{equation*}\label{def: inner product CW}
\lan \phi^1, \phi^2\ran_{\CW^s}=\sum_{\bnu\in\ZZ^\ell} \lan\bnu\ran^{-2s} \lan \phi^1_\bnu, \phi^2_\bnu\ran_{H^s(\TT^d)}, \ \ \text{for any}\ \ \phi^1, \phi^2\in \CW^s.
\end{equation*}
We thus consider the $\lan\bnu\ran^{-1}$-scaled $s$-inner product $\lan \cdot, \cdot\ran_{s, \lan\bnu\ran^{-1}}$ on $H^s(\TT^d)$, or $\lan \cdot, \cdot\ran_{s, \bnu}$ for short. That is, by \eqref{fdef innerprod t scale}, we have for $\varphi, \psi\in H^s(\TT^d)$, 
\begin{equation}\label{def: inner product s nu}
\lan \varphi, \psi\ran_{s, \bnu}= \lan\bnu\ran^{-2s}\lan \varphi, \psi\ran_s=\sum_{\xi\in \ZZ^d} \lan\bnu\ran^{-2s} \lan \xi\ran^{2s}  \wht{\varphi}(\xi)\overline{\wht{\psi}(\xi)},
\end{equation}
and we denote by $H^s_\bnu(\TT^d)$ the space of $s$-order Sobolev functions on $\TT^d$ endowed with the new inner product $\lan\cdot, \cdot\ran_{s, \bnu}$. 
Note that $H^s_\bnu(\TT^d)=H^s(\TT^d)$ as spaces of Sobolev functions, although they are equipped with different but equivalent inner products.
In this way, we obtain an orthogonal decomposition
$$\CW^s=H^s(\TT^d)\otimes H^{-s}(\TT^\ell)\cong \bigoplus_{\bnu \in \ZZ^\ell} H^s_\bnu(\TT^d),$$
such that the inner product of two functions $\phi^j(x,y)=\sum_{\bnu\in \ZZ^\ell} \phi^j_\bnu(x) e^{i2\pi \bnu\cdot y}\in \CW^s$, $j=1, 2$, is given by
\begin{equation*}\label{def: inner product CW nu}
\lan \phi^1, \phi^2\ran_{\CW^s}=\sum_{\bnu\in\ZZ^\ell} \lan \phi^1_\bnu, \phi^2_\bnu\ran_{H^s_\bnu(\TT^d)}.
\end{equation*}
Also this decomposition is $\wF$-invariant,
since for each Fourier mode $\bnu\in \ZZ^\ell$, 
\begin{equation*}
 \wF(\phi_\bnu(x)e^{i2\pi\bnu\cdot y})=[\phi_\bnu(Tx)e^{i2\pi \bnu\cdot \tau(x)}]e^{i2\pi\bnu\cdot y},
\end{equation*}
and it can be shown that $\phi_\bnu(Tx)e^{i2\pi \bnu\cdot \tau(x)}\in H^s(\TT^d)\cong H^s_\bnu(\TT^d)$.\footnote{\ 
This fact is easy to show for $s\in \NN\cup \{0\}$, and hence is also true when $s$ is a negative integer by duality. For the general case, treat $H^s$ as the interpolation between $H^{\lfloor s\rfloor}$ and $H^{\lfloor s\rfloor+1}$. See Section 4.2 in \cite{MR2744150} for details.}
Correspondingly, we decompose $\wF=\bigoplus_{\bnu\in \ZZ^\ell} \wF_\bnu$, where each $\wF_\bnu\cong \wF|H^s_\bnu(\TT^d)$ acts by
\begin{equation}\label{Fnu}
\wF_\bnu\varphi (x)= \varphi(Tx)e^{i2\pi\bnu\cdot \tau(x)}, 
\qquad \  \varphi\in H^s_\bnu(\TT^d).
\end{equation}

Note that $H^s_\bnu(\TT^d)$ is the dual space of $H^{-s}_\bnu(\TT^d)$,
and the dual operator $\wF'_\bnu$ of the operator $\wF_\bnu$ has the form
\begin{equation}\label{RPFnu}
\wF'_\bnu\psi(x)=\sum_{Ty=x} 
\dfrac{e^{i2\pi \bnu\cdot \tau(y)}}{|\Jac(T)(y)|}\psi(y), \qquad 
\psi\in H^{-s}_\bnu(\TT^d).
\end{equation}
In other words, $\wF'_\bnu|H^{-s}_\bnu(\TT^d)$ is the RPF transfer operator over $T:\TT^d\to \TT^d$ for the complex potential function 
$-\log|\Jac(T)| +i2\pi\bnu\cdot \tau$.
In the case when $\bnu=\bzero$, we have that $\wF_\bzero' h=h$, that is, the density function $h(x)$ of the smooth invariant measure $\mu$ w.r.t. $dx$ is provided by the eigenvector 
corresponding to the leading simple eigenvalue 1 of $\wF_\bzero'$. See \cite{MR2129258} for more details.

In the study of the RPF transfer operators, 
we often need to normalize $\wF_\bzero$ into a new operator $\mathcal{L}$ such that $\mathcal{L}1=1$.
To do so, we use the fact $\wF'_\bzero h=h$ in the following particular form: 
\begin{equation*}
\sum_{Ty=x}  \CA(y)=1, \ \ \text{for all}\ x\in \TT^d,
\end{equation*}
where 
\begin{equation}\label{fdef CA}
\CA(y)=\dfrac{1}{|\Jac(T)(y)|}\dfrac{h(y)}{h(Ty)}.
\end{equation}
Then the normalized transfer operator is defined by $\mathcal{L}\psi(x)=\sum_{Ty=x} \CA(y) \psi(y)$.
Similarly, we have for all $n\in \NN$, 
\begin{equation*}
\sum_{T^ny=x}  \CA_n(y)=1, \ \ \text{for all}\ x\in \TT^d,
\end{equation*}
where
\begin{equation}\label{fdef CAn}
\CA_n(y)=\dfrac{1}{|\Jac(T^n)(y)|}\dfrac{h(y)}{h(T^ny)}.
\end{equation}
Then the iterates of $\mathcal{L}$ is given by  $\mathcal{L}^n\psi(x)=\sum_{T^ny=x} \CA_n(y) \psi(y)$.

In this paper, although we do not directly study $\mathcal{L}$, we shall see that
the factors $\CA(y)$ and $\CA_n(y)$ would appear in the formulas of symbols related to the Koopman operators.

\subsection{Spectral gap}

Recall that the notion of spectral gap is given by Definition~\ref{def spec gap}.
According to the decomposition of $\wF:\CW^s\to \CW^s$, 
the spectral gap property for $\wF$ follows from the following propositions. 
The proof of the propositions will be given in the next section,
using semiclassical analysis.

\begin{prop}\label{Pprop1} 
Let $s<0$. There are $C_1>0$ and $\rho_1\in (0,1)$ 
such that for any $\bnu\in \ZZ^\ell$, $\wF_\bnu: H^s_\bnu(\TT^d)\to H^s_\bnu(\TT^d)$ can be written as 
$$ \wF_\bnu=K_\bnu+R_\bnu, $$
where $K_\bnu$ is a compact operator and 
\begin{equation}\label{essential spectrum}
\|R_\bnu^n|H^s_\bnu(\TT^d)\|\le C_1\rho_1^{n}, \ \ n\in \NN.
\end{equation}
\end{prop}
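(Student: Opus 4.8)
The plan is to conjugate each $\wF_\bnu$ by a suitable unitary operator so that it becomes (up to smoothing error) a semiclassical pseudo-differential operator, and then apply the $L^2$-continuity theorem (Theorem~\ref{ThmL2cont2}) to read off the operator norm from the principal symbol. First I would note that $\wF_\bnu$ acting on $H^s_\bnu(\TT^d)$ is unitarily equivalent, via the multiplication-by-$\lan D\ran^s$ type isomorphism $H^s_\bnu(\TT^d)\cong L^2(\TT^d)$ (which intertwines the $\lan\bnu\ran^{-1}$-scaled norm with a genuine $\hb$-scaling, $\hb\sim\lan\bnu\ran^{-1}$), to an operator on $L^2(\TT^d)$. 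Under this identification $\wF_\bnu$ is a classical Fourier integral operator: the composition with $T$ produces a phase function governed by the inverse branches of $T$, the Jacobian weight enters the amplitude, and the twist $e^{i2\pi\bnu\cdot\tau(x)}$ becomes part of the amplitude after absorbing the $\lan\bnu\ran$-dependence into the $\hb$-scaled phase. This is precisely the setup in which Egorov's theorem (Theorem~\ref{ThmEgorov2}, Remark~\ref{RmkEgorov}) applies: $\wF_\bnu^*\wF_\bnu$ — or rather a high iterate $(\wF_\bnu^n)^*\wF_\bnu^n$ — becomes an $\hb$-scaled PDO of order $0$ whose principal symbol is an explicit sum over inverse branches of $T^n$ of the weights $\CA_n(y)$ from \eqref{fdef CAn}, composed with the canonical transformation.

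The key point for the uniform bound is that $\sum_{T^ny=x}\CA_n(y)=1$ with each term positive, so by convexity (or simply by $\CA_n(y)\le \max_y\CA_n(y)\le \Const\cdot\ga^{-nd}$, reflecting the expansion rate \eqref{fdef gamma}) the principal symbol of $(\wF_\bnu^n)^*\wF_\bnu^n$ is bounded above by $\Const\cdot\ga^{-n}$ uniformly in $\bnu$ and in the phase-space point — indeed the sum of squares is dominated by the square of the sum times the max. Then Theorem~\ref{ThmL2cont2} gives $\|\wF_\bnu^n\|^2_{L^2\to L^2}\le \Const\cdot\ga^{-n}+\hb\cdot C$, where the PDO error term $\hb\cdot C$ has $C$ controlled by finitely many seminorms of the symbols, hence uniform in $\bnu$, and $\hb\sim\lan\bnu\ran^{-1}$. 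Fix $n=n_0$ large enough that $\Const\cdot\ga^{-n_0}<\rho_1^{2n_0}$ for a chosen $\rho_1\in(\ga^{-1/2},1)$; for all $\bnu$ with $\lan\bnu\ran$ large (so that $\hb C$ is negligible) we directly get $\|\wF_\bnu^{n_0}\|\le \rho_1^{n_0}$, and splitting an arbitrary $n$ as $qn_0+r$ gives \eqref{essential spectrum} with $K_\bnu=0$ for such $\bnu$. For the finitely many remaining small $\bnu$ (with $\bnu\ne\bzero$, recalling $\tau$ is not an essential coboundary), the spectral radius of $\wF_\bnu$ on $H^s_\bnu(\TT^d)$ is strictly less than $1$ by the standard Ruelle–Perron–Frobenius / Lasota–Yorke quasi-compactness of the twisted transfer operator \eqref{RPFnu}, so one writes $\wF_\bnu=K_\bnu+R_\bnu$ with $K_\bnu$ the finite-rank spectral projection part and $\|R_\bnu^n\|$ decaying geometrically; for $\bnu=\bzero$ one peels off the rank-one projection onto the eigenvector $h$ of $\wF_\bzero'$. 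Taking $C_1,\rho_1$ uniform over this finite collection and combining with the high-$\bnu$ estimate finishes the proof.

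The main obstacle I expect is establishing the Egorov/FIO conjugation cleanly and uniformly in $\bnu$: one must verify that the phase function arising from $T^{-n}$ satisfies the non-degeneracy and regularity conditions of Definition~\ref{Def hFIO} with constants independent of $n$ and $\bnu$ (this uses uniform expansion of $T$ and bounded distortion), and that the $\hb$-scaled smoothing remainders from the toroidal-to-Euclidean identification \eqref{identification PDO}–\eqref{identification FIO} are $O(\hb^\infty)$ with seminorm constants uniform in $\bnu$. The bookkeeping of how the frequency $\bnu$ enters — partly as a rescaling of $\xi$ into the Planck parameter $\hb=\lan\bnu\ran^{-1}$ and partly as a fixed direction $\bnu/\lan\bnu\ran\in\SSS^{\ell-1}$ in the amplitude — is delicate but is exactly the mechanism the authors advertise in the introduction, so I would expect the detailed estimate (presumably Lemma~\ref{Lmain estmt}) to be deferred, as the excerpt indicates.
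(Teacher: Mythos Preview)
Your proposal conflates Proposition~\ref{Pprop1} with Proposition~\ref{Pprop2} and contains two genuine gaps.

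First, the symbol estimate is wrong. The principal symbol of $Q_\bnu^*Q_\bnu$ (with $Q_\bnu$ the $L^2$-conjugate of $\wF_\bnu$) is, by Egorov and Lemma~\ref{LPmu symbol1},
\[
p_\bnu(x,\xi)=\sum_{T y=x}\CA(y)\left(\dfrac{g((D_yT)^t(\xi/\ldb\bnu\rdb))}{g(\xi/\ldb\bnu\rdb)}\right)^{2s}.
\]
For bounded $\xi$ the weight $g$ is constant by \eqref{fdef g}, so each ratio equals $1$ and the whole sum equals $\sum_{Ty=x}\CA(y)=1$. There is \emph{no} uniform bound $\le\Const\cdot\ga^{-n}$; the decay $\ga^{2s}$ appears only in $\limsup_{|\xi|\to\infty}p_\bnu(x,\xi)$. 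Your argument ``$\CA_n(y)\le\Const\cdot\ga^{-nd}$'' does not help: there are $\sim N^n$ preimages, and $N^n\ga^{-nd}$ does not decay. The decay mechanism is not the Jacobian weights but the Sobolev weight $g(\xi)^{2s}$ being pushed to infinity by the expanding lift on $T^*\TT^d$; this only works for large $|\xi|$, which is exactly why a compact remainder $K_\bnu$ is unavoidable for \emph{every} $\bnu$, not just small ones.

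Second, your treatment of small $\bnu$ is circular. You write ``by the standard Ruelle--Perron--Frobenius / Lasota--Yorke quasi-compactness of the twisted transfer operator'' --- but quasi-compactness of $\wF_\bnu$ on $H^s_\bnu$ is precisely the content of Proposition~\ref{Pprop1}. Note also that the proposition carries no hypothesis on $\tau$; the non-coboundary assumption enters only in Proposition~\ref{Pprop2} and Lemma~\ref{Lsp rad 2}.

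The paper's actual argument fixes $\bnu$, treats $\wF_\bnu$ as a \emph{classical} FIO (i.e.\ $\hb=1$, not $\hb=\lan\bnu\ran^{-1}$), computes the symbol of $P_\bnu=Q_\bnu^*Q_\bnu$ via Egorov, and then applies the classical $L^2$-continuity Theorem~\ref{ThmL2contl} --- not the semiclassical Theorem~\ref{ThmL2cont2} you cite. Theorem~\ref{ThmL2contl} directly provides the compact-plus-small decomposition $P_\bnu=K_\bnu^0(\ve)+R_\bnu^0(\ve)$ with $\|R_\bnu^0\|\le\limsup_{|\xi|\to\infty}|p_\bnu|+\ve\le\ga^{2s}+\ve$; a polar decomposition transfers this to $Q_\bnu$ and hence to $\wF_\bnu$. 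The uniformity in $\bnu$ of the constants $C_1,\rho_1$ comes from the fact that $\rho_1=(\ga^{2s}+\ve)^{1/2}$ is independent of $\bnu$ and the norm comparison \eqref{choose: C1} between $H^s_\bnu$ and $H_{\La_{s,\bnu}}$ has a $\bnu$-independent constant. The semiclassical scaling $\hb=1/\ldb\bnu\rdb$ you describe is the engine of Proposition~\ref{Pprop2}, not this one.
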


\begin{prop}\label{Pprop2} 
Let $s<0$ and assume that $\tau$ is not an essential coboundary. 
There are $C_2>0$, $\rho_2\in (0,1)$ and $\nu_1>0$ such that for any 
$\bnu\in \ZZ^\ell$ with $|\bnu|\ge \nu_1$, 
\begin{equation*}
\|\wF_\bnu^n|H^s_\bnu(\TT^d)\|\le C_2 \rho_2^n, \ \ n\in \NN.
\end{equation*}
\end{prop}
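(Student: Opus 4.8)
The plan is to realize each iterate $\wF_\bnu^n$ as an $\hbar$-scaled Fourier integral operator with $\hbar=\lan\bnu\ran^{-1}$, conjugate it to a pseudo-differential operator via an Egorov-type argument, and read off an operator-norm bound from the symbol; the non-coboundary hypothesis is what forces this symbol to decay exponentially in $n$ uniformly over large $\bnu$. First I would write out the $n$-th iterate explicitly: from \eqref{Fnu}, $\wF_\bnu^n\varphi(x)=\varphi(T^nx)\,e^{i2\pi\bnu\cdot\tau_n(x)}$ where $\tau_n(x)=\sum_{k=0}^{n-1}\tau(T^kx)$ is the Birkhoff sum. On the Fourier side along $\TT^d$ this is a toroidal Fourier series operator whose phase involves the inverse branches of $T^n$; after identifying it (up to a smoothing remainder, controlled as in \eqref{identification FIO}) with a periodic Euclidean FIO, its amplitude carries the oscillatory factor $e^{i2\pi\bnu\cdot\tau_n}$. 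Writing $\bnu=\lan\bnu\ran\,\omega$ with $\omega\in\SSS^{\ell-1}$, the operator is $\hbar$-scaled with $\hbar=\lan\bnu\ran^{-1}$, amplitude in $S^0$, and a phase function in $S^1$ coming from the expanding dynamics (whose non-degeneracy and Condition~(\ref{cond regularity}) follow from $\ga>1$).

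Next I would form $\wF_\bnu^{n*}\wF_\bnu^n$ and apply the $\hbar$-scaled Egorov theorem (Theorem~\ref{ThmEgorov2} together with Remark~\ref{RmkEgorov}): since the canonical transformation attached to $T^n$ is a surjective local diffeomorphism of $T^*\TT^d$ with finitely many inverse branches, and fiberwise bijective in $\xi$, the conjugated operator lies in $\OP_\hbar S^0$ with a principal symbol of the form
\begin{equation*}
b_n(x,\xi)=\sum_{T^n z=x}\CA_n(z)\,e^{i2\pi\bnu\cdot(\tau_n(z)-\tau_n(z'))}\quad(\text{paired branches } z,z'),
\end{equation*}
modulo $\hbar S^{-1}$, where $\CA_n$ is the weight from \eqref{fdef CAn} (the Jacobian factors combining with the density exactly because $\wF'_\bzero h=h$, so that $\sum_{T^nz=x}\CA_n(z)=1$). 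Then Theorem~\ref{ThmL2cont2} (the non-asymptotic Calderon–Vaillancourt bound) gives
\begin{equation*}
\|\wF_\bnu^n|H^s_\bnu(\TT^d)\|^2\le \sup_{(x,\xi)}|b_n^{(0)}(x,\xi)|+\hbar\,C_{k_2}(b_n^{(0)},r_n),
\end{equation*}
after transferring from the $H^s_\bnu$ inner product to $L^2$ by the unitary conjugation $\Op_\hbar(\lan\xi\ran^{s})$ built into the $\hbar=\lan\bnu\ran^{-1}$ scaling; the seminorms $\CN_{k}$ of $b_n$ grow only polynomially (or at worst sub-exponentially) in $n$ because of the uniform expansion and bounded distortion, so the $\hbar$-remainder is negligible for $|\bnu|$ large once $n$ is fixed, and can be absorbed by an inductive/iterative scheme over blocks of length $n$.

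The heart of the matter, and the step I expect to be the main obstacle, is proving that $\sup_{(x,\xi)}|b_n^{(0)}(x,\xi)|\le C\rho_2^{2n}$ uniformly for all $\omega\in\SSS^{\ell-1}$ — i.e. that the oscillatory sum defining $b_n$ exhibits genuine cancellation. One inequality is immediate: $|b_n|\le\sum_{T^nz=x}\CA_n(z)=1$, so the real content is strict uniform exponential decay. The strategy is a Dolgopyat-type oscillatory-cancellation argument phrased spectrally: $|b_n|$ is essentially the modulus of the symbol of $(\wF'_\bnu)^n(\wF'_{-\bnu})^n$-type transfer operators, and equality $|b_n|=1$ on an open set would force the phases $\bnu\cdot(\tau_n(z)-\tau_n(z'))$ to be constant across branches, which — tracking $n\to\infty$ and using that the inverse branches of $T^n$ generate a dense set of return itineraries — would produce a measurable $u$ and $v\in\RR^\ell\setminus\{0\}$ with $v\cdot\tau(x)=c+u(x)-u(Tx)$, i.e. exactly the essential-coboundary condition \eqref{fdefcobd}. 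So I would argue contrapositively: assuming $\tau$ is not an essential coboundary, establish a quantitative "Diophantine-type" lower bound on the oscillation of $\tau_n$ along distinct branches, uniform in the direction $\omega$ (this uniformity over the compact sphere $\SSS^{\ell-1}$ is where compactness and a continuity/compactness argument enter), and convert it into the exponential contraction of $|b_n|$. This quantitative non-coboundary estimate — presumably the content of the postponed Lemma~\ref{Lmain estmt} — is the crux; everything else is semiclassical bookkeeping.
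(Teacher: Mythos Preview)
Your architecture --- realize $\wF_\bnu^n$ as an $\hb$-scaled FIO with $\hb=\ldb\bnu\rdb^{-1}$, conjugate by the Sobolev weight, apply Egorov, bound via Theorem~\ref{ThmL2cont2} --- matches the paper. The gap is in the symbol computation, and with it the mechanism. Once the oscillation $e^{i2\pi\bnu\cdot\tau_n}$ is absorbed into the phase $S_{\bn_\bnu,n}(x,\xi)=T^nx\cdot\xi+\bn_\bnu\cdot\tau_n(x)$, the amplitude is identically $1$; Egorov's formula~\eqref{Egorov symbol} then produces a \emph{single} sum over preimages with $|b|^2=1$, so no ``paired branches $z,z'$'' and no cross phases $e^{i2\pi\bnu\cdot(\tau_n(z)-\tau_n(z'))}$ survive to leading order. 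The $\tau$-dependence enters only through the canonical transformation $\CF_{\bn,y}(\xi)=(D_yT)^t\xi+(D_y\tau)^t\bn$, and (Lemma~\ref{LPmu symbol2}) the principal symbol is the \emph{positive} average
\[
\wtp_{\bn_\bnu,n}(x,\xi)=\sum_{T^ny=x}\CA_n(y)\left(\frac{g\bigl((D_yT^n)^t\xi+W_{\bn_\bnu,n}(y)\bigr)}{g(\xi)}\right)^{2s},
\]
with $g$ the dynamical weight~\eqref{fdef g} and $W_{\bn,n}$ the cotangent drift~\eqref{fdef Wnun}; it depends on $\bnu$ only through the direction $\bn_\bnu\in\SSS^{\ell-1}$, whereas your $b_n$ still carries the full frequency $\bnu$.

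Consequently the contraction is not Dolgopyat-type oscillatory cancellation but \emph{escape from a trapped set} in cotangent space: each summand equals $1$ exactly when $|\CF_{\bn,y}^n(\xi)|\le R$, and Lemma~\ref{Lmain estmt} (via Sublemma~\ref{SLwtp <1}) shows that if $\tau$ is not an essential coboundary then for some $n_0$ and every $(x,\xi)$ at least one inverse branch escapes the ball $\{|\xi|\le R\}$, forcing $\sup_{\bn\in\SSS^{\ell-1}}\wtp_{\bn,n_0}<1$. Your contrapositive instinct is correct but aimed at the wrong object: it is uniform boundedness of the rescaled drifts $[(D_yT^n)^t]^{-1}W_{\bn^*,n}(y)$ along \emph{all} inverse branches that produces a closed, hence exact, $1$-form $V_{\bn^*}(x)\cdot dx$ whose potential $u$ solves $\bn^*\cdot\tau=c-u+u\circ T$. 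A cancellation estimate on an oscillatory symbol never emerges from this Egorov computation, so the crux you identify is not the one actually needed.
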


\begin{remark} 

{\rm (i)}
The quasi-compactness property is well known for Ruelle-Perron-Frobenius transfer operator on H\"older function spaces over expanding maps. Proposition~\ref{Pprop1} can be regarded as its dual version. The estimate in (\ref{essential spectrum}) shows that the essential spectral radius of $\wF_\bnu$ is no more than $\rho_1$. See  (\ref{def: rho1}) for the definition of $\rho_1$, which depends on the Sobolev order $s$ and the minimal expansion rate $\ga$ given by (\ref{fdef gamma});

{\rm (ii)}
Proposition~\ref{Pprop2} shows that the operator $\wF_\bnu$ is essentially a contraction when the Fourier mode $\bnu$ is very large, and the spectral radius of $\wF_\bnu$ is no more than $\rho_2$. See (\ref{def: rho2}) for the definition of $\rho_2$.
\end{remark}

\subsection{Proof of Theorem \ref{ThmSpec gap}}\label{SSproof spec gap}

Recall that the space $\CW^s=H^s(\TT^d)\otimes H^{-s}(\TT^\ell)$ 
is defined in \eqref{fdef space}, where $s<0$. 

\begin{lemma}\label{Lsp rad 1}  
The spectral radius $\Sp(\wF_\bnu|H^s_\bnu(\TT^d))\le 1$ for $\bnu\in \ZZ^\ell$. 
\end{lemma}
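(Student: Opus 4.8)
The plan is to show that $\wF_\bnu$ acting on $H^s_\bnu(\TT^d)$ has operator norm at most $1$ for \emph{every} $\bnu\in\ZZ^\ell$, which immediately bounds the spectral radius by $1$ since $\Sp(\wF_\bnu)\le \|\wF_\bnu\|$. The key observation is that the pairing $\lan\cdot,\cdot\ran_{s,\bnu}$ on $H^s_\bnu(\TT^d)$ is, by construction in \eqref{def: inner product CW nu}, exactly the restriction of $\lan\cdot,\cdot\ran_{\CW^s}$ to the $\bnu$-th summand, and the Koopman operator $\wF$ on $\CW^s\subset\CDD'(\TT^{d+\ell})$ is the adjoint/extension (via the duality in Subsection~\ref{SSDistsp}) of the RPF transfer operator $\wF'$. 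So I would either (a) argue directly on $\wF_\bnu$ via its dual $\wF'_\bnu$ in \eqref{RPFnu}, or (b) invoke that $\wF$ is an isometry of $L^2(\TT^{d+\ell},dA)$ and that $\CW^s$ sits compatibly relative to that $L^2$ structure. Approach (a) is cleaner and avoids worrying about which $L^2$ we use, so I would follow that.

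First I would fix $\bnu$ and pass to the dual formulation: $\Sp(\wF_\bnu|H^s_\bnu) = \Sp(\wF'_\bnu|H^{-s}_\bnu)$ since the spectra of an operator and its Banach-space adjoint coincide, and $H^s_\bnu(\TT^d)$ is the dual of $H^{-s}_\bnu(\TT^d)$. Now $-s>0$, and $\wF'_\bnu$ is the RPF transfer operator for the complex potential $-\log|\Jac(T)| + i2\pi\bnu\cdot\tau$ over the expanding map $T$. The classical bound is that such a twisted transfer operator has spectral radius at most the spectral radius of the untwisted one, because pointwise $|e^{i2\pi\bnu\cdot\tau}|=1$, so $|\wF'_\bnu\psi|\le \wF'_\bzero|\psi|$ pointwise; and $\wF'_\bzero$ has leading eigenvalue $1$ with eigenfunction $h>0$. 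Iterating, $|(\wF'_\bnu)^n\psi|\le (\wF'_\bzero)^n|\psi|$, and using $\sum_{T^ny=x}\CA_n(y)=1$ from \eqref{fdef CAn} together with a change of normalization (conjugating by $h$), one gets that $(\wF'_\bzero)^n$ is a Markov-type averaging operator, hence a contraction in the appropriate sup-type or $L^1(d\mu)$ norm, giving $\Sp(\wF'_\bzero)\le 1$. I would then transfer this estimate from the $L^1(d\mu)$ or $L^\infty$ level to the Sobolev norm $\lan\cdot,\cdot\ran_{-s,\bnu}$ using an interpolation/Lasota–Yorke-type argument, or more simply by noting that in $\CW^s$ the norm decomposes over Fourier modes and $\wF$ preserves each mode, so it suffices to know $\wF$ is bounded by $1$ on all of $\CW^s$.

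Actually the shortest route is: $\wF$ on $\CW^s$ is unitarily conjugate-compatible with the isometry $\wF$ on $L^2(\TT^{d+\ell},dA)$ — but since the $\CW^s$ norm genuinely differs from the $L^2$ norm in the $\TT^d$-direction, I cannot literally claim isometry there. So I would instead argue the per-mode bound $\|\wF_\bnu\varphi\|_{s,\bnu} \le \|\varphi\|_{s,\bnu}$ directly. By density it suffices to take $\varphi$ smooth; then using the defining Fourier-side formula \eqref{def: inner product s nu} and the explicit action \eqref{Fnu}, the bound reduces to a statement about how composition with $T$ and multiplication by the unimodular factor $e^{i2\pi\bnu\cdot\tau(x)}$ act on Sobolev norms scaled by $\lan\bnu\ran$; the multiplier being unimodular is harmless, and composition with an expanding $T$ is norm-non-increasing on negative-order Sobolev spaces (this is essentially the dual of the Lasota–Yorke inequality for $\wF'_\bnu$ on positive order).

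\textbf{Main obstacle.} The one genuinely delicate point is controlling the composition-with-$T$ operator on the \emph{negative}-order, $\bnu$-scaled Sobolev space $H^s_\bnu(\TT^d)$ uniformly in $\bnu$: one must check that the scaling factor $\lan\bnu\ran^{-2s}$ in \eqref{def: inner product s nu} does not spoil the contraction, and that the chain rule / change-of-variables estimates for $D T$ interact correctly with the frequency rescaling $\xi\mapsto\hb\xi$ with $\hb\sim\lan\bnu\ran^{-1}$. I expect this is exactly where one invokes the PDO/FIO machinery of Section~\ref{SPrel} — realizing $\wF_\bnu$ (after unitary conjugation) as an $\hb$-scaled FIO with $\hb=\lan\bnu\ran^{-1}$ and reading the norm bound off its amplitude via the $L^2$-continuity Theorem~\ref{ThmL2cont2} — and indeed this lemma is presumably meant as a soft preliminary, with the real work of making the bound \emph{strict} (or strict for $\bnu\ne\bzero$) deferred to Propositions~\ref{Pprop1} and \ref{Pprop2}. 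For this lemma, though, I only need the non-strict bound, so the crude pointwise domination $|\wF'_\bnu\psi|\le\wF'_\bzero|\psi|$ combined with $\wF'_\bzero h = h$ and a standard Lasota–Yorke argument on $H^{-s}$ should suffice.
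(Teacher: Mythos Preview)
Your approach has a genuine gap: you are trying to prove the \emph{operator norm} bound $\|\wF_\bnu|H^s_\bnu\|\le 1$, which is stronger than the spectral radius bound and is almost certainly false for the particular inner product $\lan\cdot,\cdot\ran_{s,\bnu}$ in \eqref{def: inner product s nu}. That inner product is built from Lebesgue-Fourier coefficients, not from the invariant measure $d\mu$, so neither $\wF_\bnu$ nor $\wF'_\bnu$ has any reason to be a contraction there. Your fallback---pointwise domination $|\wF'_\bnu\psi|\le \wF'_\bzero|\psi|$ plus Lasota--Yorke on $H^{-s}$---does not close the gap either: taking absolute values does not respect Sobolev norms of positive order (derivatives are lost), and a Lasota--Yorke inequality of the form $\|\wF'_\bnu\psi\|_{H^{-s}}\le \theta\|\psi\|_{H^{-s}}+C\|\psi\|_{L^2}$ only bounds the \emph{essential} spectral radius by $\theta<1$; it says nothing about the finitely many isolated eigenvalues outside $\{|z|\le\theta\}$, which could a priori have modulus larger than~$1$.

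The paper's proof uses exactly this quasi-compactness (from Proposition~\ref{Pprop1}) as a \emph{starting point}, not an endpoint, and then supplies the missing ingredient you did not identify: a \emph{weak} bound valid for smooth test functions. Namely, for $\varphi,\psi\in\CDD(\TT^d)$ one has the trivial pointwise estimate
\[
\bigl|(\psi,\wF_\bnu^n\varphi)_{H^{-s}_\bnu,H^s_\bnu}\bigr|=\Bigl|\int_{\TT^d}\psi\,\wF_\bnu^n\varphi\,dx\Bigr|\le |\psi|_{C^0}|\varphi|_{C^0},
\]
uniformly in $n$. If there were an eigenvalue $\la_1$ with $|\la_1|>1$, one writes $\wF_\bnu=K^1_\bnu+R'_\bnu$ with $K^1_\bnu$ finite rank (spectral projection outside a circle of radius $\rho_3\in(\rho_1,1)$) and $\Sp(R'_\bnu)<\rho_3$, picks smooth $\varphi,\psi$ pairing nontrivially with the leading Jordan block of $K^1_\bnu$ (possible by density of $\CDD(\TT^d)$ in both $H^s_\bnu$ and $H^{-s}_\bnu$), and reads off growth $\sim|\la_1|^n$ in the pairing, contradicting the uniform bound. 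The point is that spectral radius $\le 1$ comes from combining quasi-compactness with a weak duality bound, not from a norm bound on any single space.
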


\begin{proof} The proof is similar as in \cite{MR2461513}, as we sketch here.

It follows from Proposition~\ref{Pprop1} that 
the essential spectral radius of $\wF_\bnu$ is no more than $\rho_1\in (0, 1)$,
and thus $\wF_\bnu$ has a discrete spectrum outside the circle $\{z\in \CC: |z|=\rho_1\}$.
Let $\Spec(\wF_\bnu)$ the spectrum of the operator $\wF_\bnu$. We can choose 
$\rho_3\in (\rho_1, 1)$ such that $ \Spec(\wF_\bnu)\bigcap \{z: |z|=\rho_3\}=\emptyset$
and $\Spec(\wF_\bnu)\bigcap \{z: |z|>\rho_3\}$ consists of finitely many points.
Let $\Pi_\bnu$ be the spectral projection of $\wF_\bnu$ inside the circle $\{z: |z|=\rho_3\}$, that is,
$$
\Pi_\bnu=\frac{1}{2\pi i} \left.\oint\right._{|z|=\rho_3} (z\mathrm{Id} - \wF_\bnu)^{-1} dz,
$$
then it is well known that $\Pi_\bnu$ is a projection and it commutes with $\wF_\bnu$, i.e., 
$\Pi_\bnu\wF_\bnu=\wF_\bnu\Pi_\bnu$ (See e.g. \cite{MR1335452}). 
Moreover, if we set 
$K^1_\bnu=(\mathrm{Id} -\Pi_\bnu)\wF_\bnu$ and $R^1_\bnu=\Pi_\bnu \wF_\bnu$,
then we have $\wF_\bnu=K^1_\bnu+R^1_\bnu$ such that 
$K^1_\bnu R^1_\bnu=R^1_\bnu K^1_\bnu=0$, and 
\begin{equation*}
\begin{split}
\Spec(K^1_\bnu)&=\Spec(\wF_\bnu)\bigcap \{z: |z|>\rho_3\}, \\
\Spec(R^1_\bnu)&=\Spec(\wF_\bnu)\bigcap \{z: |z|<\rho_3\}.
\end{split}
\end{equation*}
In other words, $K^1_\bnu$ has finite rank and 
the spectral radius of $R^1_\bnu$ is less than $\rho_3$.
To prove that $\Sp(\wF_\bnu|H^s_\bnu(\TT^d))\le 1$ for $\bnu\in \ZZ^\ell$, 
it is then sufficient to show that all eigenvalues of $K^1_\bnu$ are of modulus no more than 1.

The general Jordan decomposition of $K^1_\bnu$ can be written
$$ K^1_\bnu=\sum_{i=1}^k \left(\la_i \sum_{j=1}^{d_i} v_{ij}\otimes w_{ij} + \sum_{j=1}^{d_i-1} v_{ij}\otimes w_{i(j+1)} \right)
$$
where $d_i$ is the dimension of the Jordan block associated with the eigenvalue $\la_i$, with $v_{ij}\in H^s_\bnu(\TT^d)$ and $w_{ij}\in H^{-s}_\bnu(\TT^d)$. We arrange eigenvalues such that $|\la_1|\ge |\la_2|\ge \dots \ge |\la_k|$. 

Now if $|\la_1|>1$, we can choose $\varphi, \psi\in \CDD(\TT^d)$ such that $v_{11}(\psi)\ne 0$ and $w_{11}(\varphi)\ne 0$ since $\CDD(\TT^d)$ is dense in both $H^s_\bnu(\TT^d)$ and $H^{-s}_\bnu(\TT^d)$. On one hand,  
$$ \left|(\psi, \wF_\bnu^n \varphi)_{H^{-s}_\bnu, H^s_\bnu}\right|=\left|\int_{\TT^d} \psi \wF^n_\bnu \varphi dx\right|\le \int |\psi| |\varphi|\circ T^n dx\le  |\psi|_{C^0} |\varphi|_{C^0}.
$$
On the other hand, 
$$ \left|(\psi, \wF_\bnu^n \varphi)_{H^{-s}_\bnu, H^s_\bnu}\right|\ge \left|(\psi, (K^1_\bnu)^n \varphi)_{H^{-s}_\bnu, H^s_\bnu}\right|-\left|(\psi, (R^1_\bnu)^n \varphi)_{H^{-s}_\bnu, H^s_\bnu}\right|.
$$
The second term converges to 0 since $\|(R^1_\bnu)^n|H^s_\bnu(\TT^d)\|=O(\rho_3^n)$, while the first term
\begin{equation*}
 \left|(\psi, (K^1_\bnu)^n \varphi)_{H^{-s}_\bnu, H^s_\bnu}\right|=\left|\sum_{i=1}^k \sum_{r=0}^{\min(n, d_i-1)} {n\choose r} \la_i^{n-r} \sum_{j=1}^{d_i-r} v_{ij}(\psi) w_{i(j+r)}(\varphi)\right|
\end{equation*}
has a leading growth $|\la_1|^n |v_{11}(\psi)| |w_{11}(\varphi)| \to \infty$ as $n\to \infty$, which is a contradiction. Therefore, all eigenvalues of $K^1_\bnu$ are of modulus no more than 1. 
\end{proof}

\begin{lemma}\label{Lsp rad 2} If $\tau$ is not an essential coboundary over $T$, then the spectral radius $\Sp(\wF_\bnu|H^s_\bnu(\TT^d))< 1$ for $\bnu\in \ZZ^\ell\backslash\{\bzero\}$. Moreover, 
1 is the only eigenvalue of $\wF_\bzero$ on the unit circle, which is simple with eigenspace of constant functions.
\end{lemma}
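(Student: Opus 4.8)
The plan is to deduce Lemma~\ref{Lsp rad 2} from Propositions~\ref{Pprop1} and \ref{Pprop2} together with Lemma~\ref{Lsp rad 1}, by splitting the set of Fourier modes into the finitely many ``low'' modes $\{\bnu : 0<|\bnu|<\nu_1\}$, the ``high'' modes $\{\bnu : |\bnu|\ge \nu_1\}$, and the special mode $\bnu=\bzero$. For the high modes, Proposition~\ref{Pprop2} already gives $\|\wF_\bnu^n|H^s_\bnu(\TT^d)\|\le C_2\rho_2^n$, which immediately forces $\Sp(\wF_\bnu|H^s_\bnu(\TT^d))\le \rho_2<1$ via the spectral radius formula $\Sp(\wF_\bnu)=\lim_n \|\wF_\bnu^n\|^{1/n}$. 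So the real content is the low modes and $\bzero$.

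For a fixed low mode $\bnu\neq\bzero$, I would argue by contradiction: suppose $\Sp(\wF_\bnu|H^s_\bnu(\TT^d))=1$ (we already know it is $\le 1$ by Lemma~\ref{Lsp rad 1}). By Proposition~\ref{Pprop1}, $\wF_\bnu$ is quasi-compact with essential spectral radius $\le \rho_1<1$, so a spectral value on the unit circle must be an eigenvalue $\la$ with $|\la|=1$ and finite-dimensional generalized eigenspace. Let $\varphi\in H^s_\bnu(\TT^d)$ be a corresponding eigenfunction, $\wF_\bnu\varphi=\la\varphi$, i.e. $\varphi(Tx)e^{i2\pi\bnu\cdot\tau(x)}=\la\varphi(x)$. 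The standard move here is a bootstrap-regularity argument: writing this as $\varphi(Tx)=\la e^{-i2\pi\bnu\cdot\tau(x)}\varphi(x)$ and iterating the dual/transfer relation, one shows $|\varphi|$ is (a.e. equal to) a constant, hence $\varphi\neq 0$ a.e., and that $\varphi$ is in fact smooth — this is exactly the Livsic-type rigidity already invoked in Remark after Definition~\ref{Defcobd}. Then $u(x):=-\frac{1}{2\pi}\arg\varphi(x)$ (well-defined mod $\ZZ$, lifted) satisfies $\bnu\cdot\tau(x)=c+u(Tx)-u(x)$ for some constant $c$ with $e^{-i2\pi c}=\la$, exhibiting $\bnu\cdot\tau$ as an essential coboundary with integer vector $\bnu\in\ZZ^\ell\setminus\{\bzero\}$. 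This contradicts the hypothesis that $\tau$ is not an essential coboundary (Definition~\ref{Defcobd} with $v=\bnu$). Hence $\Sp(\wF_\bnu|H^s_\bnu(\TT^d))<1$ for every $\bnu\neq\bzero$.

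For $\bnu=\bzero$, the operator is $\wF_\bzero\varphi=\varphi\circ T$, whose dual on $H^{-s}_\bzero(\TT^d)$ is the honest RPF operator for the potential $-\log|\Jac T|$. Constants are fixed by $\wF_\bzero$, so $1$ is an eigenvalue; that it is the \emph{only} eigenvalue on the unit circle and is simple with eigenspace $\CC\cdot\bbone$ follows from the same eigenfunction analysis as above (now with $\bnu=\bzero$, the relation $\varphi\circ T=\la\varphi$ with $|\la|=1$ forces $|\varphi|$ constant and then $\varphi$ constant with $\la=1$, using mixing of $T$ with respect to $\mu$), combined with the classical quasi-compactness of the RPF operator for a $C^\infty$ expanding map and its well-known spectral gap for the normalized potential (see \cite{MR2129258}); the density function $h$ is the leading eigenvector of $\wF'_\bzero$ as noted after \eqref{RPFnu}.

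The main obstacle I anticipate is the regularity/rigidity step: promoting a mere $H^s_\bnu$ eigendistribution $\varphi$ (with $s<0$, so $\varphi$ is genuinely a distribution a priori) to a nowhere-vanishing smooth function, so that $\arg\varphi$ makes sense and yields a bona fide transfer equation $\bnu\cdot\tau = c + u\circ T - u$. One must first use quasi-compactness (Proposition~\ref{Pprop1}) to know the eigenspace is finite-dimensional and then bootstrap: the eigenrelation $\varphi = \la^{-1}\wF_\bnu\varphi$ pulls regularity back through $T$, and because $T$ is uniformly expanding this iteration gains smoothness, landing $\varphi$ in $C^\infty$; the modulus of $\varphi$ is then a $T$-invariant continuous function, hence constant by ergodicity of $(T,\mu)$. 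This is the point where one genuinely needs the Livsic machinery referenced in the paper, and care is required because $\tau$ is only assumed not to be an essential coboundary as a \emph{vector}, i.e. the obstruction must be produced with the \emph{integer} vector $\bnu$, which is precisely what the eigenfunction equation delivers.
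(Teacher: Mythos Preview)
Your overall architecture is right: reduce to an eigenvalue $\la$ on the unit circle, produce from an eigenfunction a cohomological equation $\bnu\cdot\tau=c+u\circ T-u$, and contradict the hypothesis. The gap is precisely where you flagged it, and your proposed fix does not work.

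The claim that ``the eigenrelation $\varphi=\la^{-1}\wF_\bnu\varphi$ pulls regularity back through $T$, and because $T$ is uniformly expanding this iteration gains smoothness'' is backwards. Writing it out, $\varphi(x)=\la^{-1}e^{i2\pi\bnu\cdot\tau(x)}\varphi(Tx)$ expresses $\varphi$ as a smooth multiple of $\varphi\circ T$. Composition with an \emph{expanding} map does not smooth; it is composition with the contracting inverse branches $T_i^{-1}$ that gains regularity, and those appear in the \emph{transfer} operator $\wF'_\bnu$, not in the Koopman operator $\wF_\bnu$. Iterating your relation gives $\varphi(x)=\la^{-n}e^{i2\pi\bnu\cdot\sum_{k=0}^{n-1}\tau(T^kx)}\varphi(T^nx)$, which expresses $\varphi$ through the increasingly rough object $\varphi\circ T^n$ and yields no gain. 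Nor does quasi-compactness on the scale of spaces $H^{s'}$, $s'<0$, help: Proposition~\ref{Pprop1} only gives essential spectral radius $\le\rho_1(s')$ with $\rho_1(s')\to 1$ as $s'\uparrow 0$, so you cannot bootstrap past $s'=0$ to reach continuous functions. Without $\varphi$ being a genuine function, $|\varphi|$ and $\arg\varphi$ make no sense and the argument stalls.

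The paper's remedy is to switch sides. View $\varphi\in H^s_\bnu\subset H^{s-d/2-1}_\bnu$; by duality of the finite-dimensional peripheral eigenspaces there is an eigenvector $\psi\in H^{-s+d/2+1}_\bnu$ of $\wF'_\bnu$ with $\wF'_\bnu\psi=\la\psi$. Since $-s+d/2+1>d/2$, Sobolev embedding gives $\psi\in C(\TT^d)$ for free, with no bootstrap needed. One then runs Pollicott's convexity argument on the RPF equation: setting $\opsi=\psi/h$, the identity
\[
\sum_{Ty=x}\CA(y)\,e^{i2\pi[\bnu\cdot\tau(y)-c]}\,\opsi(y)=\opsi(x)
\]
is a convex combination on a circle of radius $\max|\opsi|$ at a maximizing point; density of $\bigcup_n T^{-n}(z)$ forces $|\opsi|$ constant, hence $e^{i2\pi[\bnu\cdot\tau(y)-c]}\opsi(y)=\opsi(Ty)$, which is exactly the coboundary relation with $u=-\tfrac{1}{2\pi}\arg\opsi$. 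This forces $\bnu=\bzero$, then $c=0$ (so $\la=1$) and $\opsi$ constant, settling both assertions of the lemma at once.

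Two minor remarks. First, invoking Proposition~\ref{Pprop2} for $|\bnu|\ge\nu_1$ is harmless but unnecessary here; the eigenfunction argument above treats all $\bnu\neq\bzero$ uniformly, and Proposition~\ref{Pprop2} is reserved for the \emph{uniform} spectral bound needed in Theorem~\ref{ThmSpec gap}. Second, for $\bnu=\bzero$ you can either cite the classical RPF spectral gap or, more economically, run the same Pollicott argument with $\bnu=\bzero$ to get $\la=1$ and $\psi\in\CC h$, then dualize back.
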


\begin{proof} Note that the essential spectral radius of $\wF_\bnu|H^s_\bnu(\TT^d)$ is no more than $\rho_1$ by \eqref{essential spectrum}. For any $\rho_3\in (\rho_1, 1)$, by Lemma~\ref{Lsp rad 1}, the spectrum of $\wF_\bnu|H^s_\bnu(\TT^d)$ in $\{z\in \CC: \rho_3\le |z|\le 1\}$ consists of finitely many isolated eigenvalues of finite multiplicity. Consequently, the spectral radius $\Sp(\wF_\bnu|H^s_\bnu(\TT^d))$ equals to the largest modulus of its eigenvalues.

Let $\la$ be an eigenvalue of $\wF_\bnu$ with modulus 1, and 
$\varphi\in H^s_\bnu(\TT^d)\subset H^{s-\frac{d}{2}-1}_\bnu(\TT^d)$ be
a corresponding eigenvector such that $\wF_\bnu\varphi=\la \varphi$. 
To prove this lemma, it is sufficient to show that $\bnu=\bzero$, $\la=1$, 
and $\varphi$ is a constant function.

The following argument is essentially due to Pollicott \cite{MR758899}. 
By duality, there is $\psi\in H^{-s+\frac{d}{2}+1}_\bnu(\TT^d)\subset C(\TT^d)$ such that $\wF_\bnu'\psi=\la \psi$. Let $\la=e^{i2\pi c}$ for some $c\in \RR$, and set $\overline{\psi}=\frac{\psi}{h}$, where $h(x)$ is the density function of $\mu$ w.r.t. $dx$. 
By the definition of $\wF_\bnu'$ in (\ref{RPFnu}) and
$\CA(y)$ in (\ref{fdef CA}), we have
\begin{equation}\label{Convex comb}
 \sum_{Ty=x} \CA(y) e^{i2\pi[\bnu\cdot \tau(y)-c]} \overline{\psi}(y) =\overline{\psi}(x).
\end{equation}
Now choose $z$ such that $|\opsi(z)|$ obtains maximum.  Since $\sum_{Ty=z} \CA(y)=1$, we must have $|\opsi(y)|=|\opsi(z)|$ for all $y\in T^{-1}(z)$. By induction, we get that $|\opsi(y)|=|\opsi(z)|$ for all $y\in \bigcup_{n=1}^\infty T^{-n}(z)$. Since $T$ is mixing, the set $\bigcup_{n=1}^\infty T^{-n}(z)$ is dense in $\TT^d$, and hence $|\opsi(x)|=|\opsi(z)|$ is constant. Thus (\ref{Convex comb}) is a convex combination of points of a circle which lies on the circle. From this we deduce that
$$ e^{i2\pi[\bnu\cdot \tau(y)-c]} \opsi(y)=\opsi(Ty)
$$
for all $y\in \TT^d$, and hence (adjust $c$ by an integer value if needed),
$$ \bnu\cdot \tau(y)=c-\dfrac{1}{2\pi} \arg\opsi(y)+\dfrac{1}{2\pi} \arg\opsi(Ty).$$
Since $\tau$ is not an essential coboundary over $T$, we must have $\bnu=\bzero$. By integrating the last equation w.r.t. $d\mu$, we also have that $c=0$ and thus $\la=1$. Further,  $\arg\opsi\equiv$ constant since it is $T$-invariant, and thus $\opsi\equiv$ constant, which implies that $\psi=h\opsi$ is a constant multiple of $h$. Therefore, the space $\{\psi: \wF_\bzero'\psi=\psi\}$ is one-dimensional, so is the space $\{\varphi: \wF_\bzero\varphi=\varphi\}$ by duality.
Since $\wF_\bzero 1= 1$, we must have that $\varphi$ is a constant function.
\end{proof}

Now we are ready to prove Theorem \ref{ThmSpec gap}.

\begin{proof}[Proof of Theorem \ref{ThmSpec gap}] 
We assume that $\tau$ is not an essential coboundary.  Hence, the results
of Proposition~\ref{Pprop2} can be applied.

By Lemma~\ref{Lsp rad 1} and \ref{Lsp rad 2}, 
we have the following:
\begin{enumerate}
\item[(i)] When $\bnu=\bzero$, the spectrum $\Spec(\wF_\bzero)=\{1\}\cup \CK_\bzero$, where 1 is a simple eigenvalue of $\wF_\bzero$ and $\CK_\bzero$ is a compact subset of the open unit disk $\DD$. 
Choose $r_\bzero\in [0, 1)$ such that $\CK_\bzero$ is strictly contained inside the circle $\{z: |z|=r_\bzero\}$.
Let $\Pi_\bzero$ be the spectral projection of $\wF_\bzero$ inside $\{z: |z|=r_\bzero\}$,
and note that $\ker(\Pi_\bzero)$ is the one-dimensional subspace consisting of constant functions.
Write  $V_\Const=\ker(\Pi_\bzero)$ and $V_\bzero=\mathrm{Im}(\Pi_\bzero)$.
It is clear that  
\begin{equation*}
H^s_\bzero(\TT^d)=V_\Const \oplus V_\bzero,
\end{equation*}
Moreover, both subspaces are preserved by $\wF_\bzero$ 
since $\Pi_\bzero\wF_\bzero=\wF_\bzero\Pi_\bzero$, and 
the spectra of $\wF_\bzero$ restricted to these two subspaces are given by
\begin{equation*}
\Spec(\wF_\bzero|_{V_\Const}) = \{1\},  \ \text{and} \ \ 
\Spec(\wF_\bzero|_{V_\bzero})  = \CK_\bzero\subset \{z: |z|<r_\bzero\}.
\end{equation*}
Therefore, there is $M_\bzero>0$
such that $\|\wF_\bzero^n|V_\bzero\|\le M_\bzero r_\bzero^n$ for all $n\in \NN$;
\item[(ii)] For all $\bnu\in \ZZ^\ell\backslash\{\bzero\}$, the spectrum $\Spec(\wF_\bnu)$ is strictly inside the open unit disk $\DD$. Equivalently, there are $M_\bnu>0$ and $r_\bnu\in [0, 1)$ such that $\|\wF_\bnu^n|H^s_\bnu(\TT^d)\|\le M_\bnu r_\bnu^n$ for all $n\in \NN$.
\end{enumerate}
And by Proposition~\ref{Pprop2},
\begin{enumerate}
\item[(iii)] When $|\bnu|\ge \nu_1$, $\|\wF_\bnu^n|H^s_\bnu(\TT^d)\|\le C_2\rho_2^n$ for all $n\in \NN$.
\end{enumerate}
We set the direct sum
\begin{equation}\label{def: V}
V=V_\bzero\oplus \left(\bigoplus\limits_{\bnu\in \ZZ^\ell\backslash\{\bzero\}} H^s_\bnu(\TT^d) \right),
\end{equation}
then $\CW^s=V_\Const \oplus V$. Furthermore, let $C_4:=\max\{C_2, \max_{|\bnu|<\nu_1}\{M_\bnu\}\}$ and $\rho_4:=\max\{\rho_2, \max_{|\bnu|<\nu_1}\{r_\bnu\}\}$, then we have $\|\wF^n|V\|\le C_4\rho_4^n$ for all $n\in \NN$. In other words,
$\wF=\oplus_{\bnu\in \ZZ^\ell} \wF_\bnu$ has spectrum
$$ 
\Spec(\wF)=\{1\}\cup \CK,
$$
where $\CK=\Spec(\wF|V)\subset \{z\in \CC: |z|\le \rho_4\}$, and $1$ is the only leading simple eigenvalue with eigenvectors being constant functions. So $\wF:\CW^s\to \CW^s$ has spectral gap.
\end{proof}

\subsection{Proof of Theorem~\ref{ThmMain thm}}

Now we use Theorem~\ref{ThmSpec gap} to prove Theorem~\ref{ThmMain thm}.
What we need to do is to show that if $\wF: \CW^s\to \CW^s$ has a spectral gap,
then it is exponentially mixing.  In the proof we regard the H\"older continuous observables 
$\phi$ and $\psi$ as elements in $\CW^s$ and $\CW^{-s}$ respectivly.

\begin{proof}[Proof of Theorem \ref{ThmMain thm}] 
Since $\wF: \CW^s\to \CW^s$ has a spectral gap, we can write
$$\wF=\wF|V_\Const + \wF|V=:\CP+\CN,$$ 
where $V$ is defined in (\ref{def: V}).
From the proof of Theorem \ref{ThmSpec gap}, we know that 
\begin{enumerate}[(a)]
\item $\CP$ is a 1-dimensional projection, i.e., $\CP^2=\CP$ ;
\item $\CN$ is a bounded operator with spectral radius $\Sp(\CN)\le \rho_4<1$. In fact, $\|\CN^n\|\le C_4\rho_4^n$ for all $n\in \NN$;
\item $\CP\CN=\CN\CP=0$.
\end{enumerate}
Furthermore, 1 is the greatest simple eigenvalue for $\wF_\bzero$ with eigenvector 1 as well as for $\wF_\bzero'$ with eigenvector $h$.
It means that the bilinear form associated to $\CP$ on $\CW^{-s}\times \CW^{s}$
is generated by $1\otimes h\in \CW^s\otimes \CW^{-s}$, that is,
for any $\psi\in \CW^{-s}$ and $\phi\in \CW^{s}$, 
\begin{equation*}
(\psi, \CP(\phi))_{\CW^{-s}, \CW^s}=(\psi, (1\otimes h)(\phi))_{\CW^{-s}, \CW^s}.
\end{equation*}

Suppose $\phi, \psi\in C^\al(\TT^{d+\ell})$ are given.  
Pick $s\in [-\al, 0)$ and let 
$\CW^{-s}=H^{-s}(\TT^d)\otimes H^{s}(\TT^\ell)$.  
Then the dual space of $\CW^{-s}$ is
$\CW^{s}=H^{s}(\TT^d)\otimes H^{-s}(\TT^\ell)$.   
Note that 
$C^\al(\TT^{d+\ell})$ is contained in both $\CW^{-s}$ and $\CW^{s}$, and thus 
$\phi\in \CW^s$ and $\psi h\in \CW^{-s}$, where $h\in C^\infty(\TT^d)$ 
is the density function of $\mu$ w.r.t. $dx$.
Hence, 
\begin{equation*}
\begin{split}
\int (\phi\circ F^n) \psi dA  
=& \int (\phi\circ F^n) \psi h\ dxdy \\
=& (\psi h, \wF^n(\phi))_{\CW^{-s}, \CW^s} \\
=& (\psi h, \CP(\phi))_{\CW^{-s}, \CW^s} + (\psi h, \CN^n(\phi))_{\CW^{-s}, \CW^s}\\
=& (\psi h, (1\otimes h) (\phi) )_{\CW^{-s}, \CW^s} + (\psi h, \CN^n(\phi))_{\CW^{-s}, \CW^s}\\
=&  \left(\psi h, \left(\int \phi h dxdy\right)\cdot 1 \right)_{\CW^{-s}, \CW^s} + (\psi h, \CN^n(\phi))_{\CW^{-s}, \CW^s} \\
=&  \int \psi dA \int \phi dA + (\psi h, \CN^n(\phi))_{\CW^{-s}, \CW^s}.
\end{split}
\end{equation*}
That is, the correlation function
$$ C_n(\phi, \psi; F, dA)=|(\psi h, \CN^n(\phi))_{\CW^{-s}, \CW^s}|\le \|\CN^n\| \|\psi h\|_{\CW^{-s}} \|\phi\|_{\CW^s}\le C_{\phi, \psi} \rho_4^n$$
where $C_{\phi, \psi}=C_4\|\psi h\|_{\CW^{-s}} \|\phi\|_{\CW^s}.$
\end{proof}

\begin{remark} Using some Sobolev inequalities, it is not hard to get that $\|\psi h\|_{\CW^{-s}}\le C_5\|\psi\|_{C^\al}\|h\|_{C^1}$ and $\|\phi\|_{\CW^s}\le C_6\|\phi\|_{C^\al}$, and hence $C_{\phi, \psi}\le C_7\|\phi\|_{C^\al} \|\psi\|_{C^\al}$.
\end{remark}

\subsection{Proof of Theorem~\ref{ThmCobd}}

Now we show the characters of the non-mixing skew products $F_\tau$, that is,
$\tau_1, \tau_2,\dots, \tau_\ell$ are integrally dependent $\mmod$ $\fB$. 

\begin{proof}[Proof of Theorem~\ref{ThmCobd}]

(i)$\Rightarrow$(ii).  
Suppose $\tau_1, \tau_2, \dots, \tau_\ell$ are integrally dependent $\mmod$~$\fB$, that is, there are $v\in \ZZ^\ell\backslash\{\bzero\}$, $c\in \RR$ and $u\in C^\infty(\TT^d, \RR)$ such that 
$$ v\cdot \tau(x)=c+u(x)-u(Tx). $$

For any $(x,y)\in \TT^d\times \TT^\ell$, the set 
\begin{equation*}
\CL(x,y)=\{(x',y')\in \TT^d\times \TT^\ell: v\cdot y'+u(x')= v\cdot y+u(x) \pmod \ZZ\}.
\end{equation*}
is well-defined. Moreover, 
since $u$ is a smooth map, $\CL(x,y)$ is a smooth $(d+\ell-1)$-dimensional 
manifold, and $\{\CL(x,y): \ (x,y)\in \TT^d\times \TT^\ell\}$
form a foliation of $\TT^d\times \TT^\ell$.
It is clear that for any fixed $x\in \TT^d$,
\[
\CL(x,y)|_{\{x\}\times \TT^\ell}
=\{(x,y')\in \{x\}\times \TT^\ell: v\cdot (y-y')=0 \pmod \ZZ\}.
\]
It implies that the leaves of $\CL(x,y)|_{\{x\}\times \TT^\ell}$ are normal 
to $v$.

For $(x',y')\in \CL(x,y)$, the definition of $F$ gives 
\begin{equation*}
F(x,y)=(Tx, y+\tau(x)) \quad \text{and} \quad
F(x',y')=(Tx', y'+\tau(x')),
\end{equation*}
then
\begin{equation*}
v\cdot (y+\tau(x))+u(Tx)
=v\cdot y+v\cdot \tau(x)+u(Tx)
=v\cdot y+c+u(x) \pmod \ZZ
\end{equation*}
and similarly $v\cdot (y'+\tau(x'))+u(Tx')=v\cdot y'+c+u(x') \pmod \ZZ$.
Hence we obtain
\begin{equation*}
v\cdot (y'+\tau(x'))+u(Tx')=v\cdot (y+\tau(x))+u(Tx) \pmod \ZZ.
\end{equation*}
By definition of $\CL$, we get $F(x',y')\in \CL(F(x,y))$, that is, the foliation is $F$-invariant.


(ii)$\Rightarrow$(iii).  
Restricted to $\{p\}\times \TT^\ell$ the leaves of the foliation $\CL$
become $(\ell -1)$ dimensional tori because the leaves are normal to $v$.
Hence the quotient space $\TT^\ell/\sim$ is a circle $\TT$, where
$y\sim y'$ if $y$ and $y'$ are in the same leave of 
$\CL|_{\{p\}\times \TT^\ell}$.
Let $\pi|_{\{p\}\times \TT^\ell}: \{p\}\times \TT^\ell\to \{p\}\times\TT$ 
be the quotient map. Clearly $\pi|_{\{p\}\times \TT}$ is continuous. 
Since $F: \{p\}\times \TT^\ell \to \{p\}\times \TT^\ell$ is a rotation 
given by $F(p, y)=(p, y+\tau(p))$ and preserves the leaves,
it induces an rotation 
$G|_{\{p\}\times \TT}: \{p\}\times\TT\to \{p\}\times\TT$
such that $\pi|_{\{p\}\times \TT}\circ F|_{\{p\}\times \TT}
=G|_{\{p\}\times \TT}\circ \pi|_{\{p\}\times \TT}$.   
We also denote by $R_c$ the rotation, where $c\in \TT$.  

$\pi|_{\{p\}\times \TT}$ and $G|_{\{p\}\times \TT}$ can be extended to map 
$\pi: \TT^d\times \TT^\ell \to \TT^d\times \TT$
and $G: \TT^d\times \TT \to \TT^d\times \TT$ in a natural way.
That is, for any $(x,y)\in \TT^d\times \TT^\ell$,
let $y'\in \CL(x,y)\cap (\{p\}\times \TT^d)$, and define 
$\pi(x,y)=(x, \pi|_{\{p\}\times \TT^\ell}(y'))$; 
and for any $(x, \bar y)\in \TT^d\times \TT$
define $G(x,\bar y)=(Tx, G_{\{p\}\times \TT}(\bar y))=(Tx, R_c(\bar y))$.
Note that for all $x\in \TT^d$,
the leaves of $\CL(x,y)|_{\{x\}\times \TT^\ell}$ are normal to $v$.
It is easy to check that $\pi\circ F=G\circ \pi$.

(iii)$\Rightarrow$(iv).  
Weak mixing property does not hold for the circle rotation $R_c$, let alone the extension $F$.

(iv)$\Rightarrow$(i).  
It follows from the results by Parry and Pollicott \cite{MR1632190}, and also by Field and Parry \cite{MR1644099}. 
\end{proof}

\section{Spectrums of $\wF_\nu$: 
Proof of Proposition~\ref{Pprop1} and \ref{Pprop2}}

In this section, we shall prove the two main propositions - Proposition~\ref{Pprop1} and Proposition~\ref{Pprop2}.
The main scheme of the proofs and constructions are originated from Faure \cite{MR2785978} 
and many other related works. More precisely,
we shall use semiclassical analysis to prove Proposition~\ref{Pprop1}
and Proposition~\ref{Pprop2}. The flexibility of the parameter $\hb$ allows us to deal with the operators 
$\wF_\bnu: H^s_\bnu(\TT^d)\to H^s_\bnu(\TT^d)$, $\bnu\in \ZZ^d$, in two different ways.
To be precise, for any fixed frequency $\bnu\in \ZZ^\ell$, we take $\hb=1$ and treat $\hF_\bnu$ as a classical FIO (up to a smoothing operator)
in the proof of Proposition~\ref{Pprop1};
while for Proposition~\ref{Pprop2}, we set $\hb=1/\max\{1, |\bnu|\}$ and regard $\hF_\bnu$ as 
an $\hb$-scaled FIO (up to an $\hb$-scaled smoothing operator).

\subsection{The Sobolev spaces with non-standard inner products}\label{SSsp op} 

We first construct a particular symbol on $T^*\TT^d$ as follows. Choose 
\begin{equation}\label{fdef R}
R>\max\left\{1, \frac{\max\{1, \;\  2\|D\tau\|\} }{\ga-1}\right\},
\end{equation}
where $\ga$ is given in (\ref{fdef gamma}), and $\|D\tau\|=\sup\limits_{x\in \TT^d} |D_x\tau|$.
Let $g_0\in C^\infty(\RR^+)$ be such that
\begin{equation}\label{fdef g}
g_0(t)=\begin{cases} 1, \ & \ t\le R; \\ 
    t, \ & \ t\ge \frac{\ga+1}{2}R, \end{cases}
\end{equation}
and for $t\in [R, \frac{\ga+1}{2}R)$, $g_0(t)$ is strictly increasing and $1\le g_0(t)\le t$. Set $g(\xi)=g_0(|\xi|)$ for $\xi\in \RR^d$. 
Given $s<0$, define an elliptic symbol
\begin{equation}\label{fdef las}
\la_{s}(x,\xi)=h(x)^{\frac12} g\left(\xi\right)^s \in S^s,
\end{equation}
where $h(x)$ is the density function of $\mu$ w.r.t. $dx$.
Further, given
$\bnu\in\ZZ^\ell$, define 
\begin{equation}\label{fdef las nu}
\la_{s,\bnu}(x,\xi)=\la_s\left(x, \frac{\xi}{\ldbrack\bnu\rdbrack}\right) \in S^s,
\end{equation}
where $\ldbrack\bnu\rdbrack:=\max\{1, |\bnu|\}$.

Denote $\La_{s, \bnu}=\Op(\la_{s, \bnu})\in \OP S^s$, and 
define an inner product on $H^s(\TT^d)$ by 
\begin{equation*}
\lan \varphi, \psi\ran_{\La_{s, \bnu}}=\lan \La_{s,\bnu} \varphi, \La_{s,\bnu} \psi\ran_{L^2}, \ \ \ \ \ \varphi, \psi\in H^s(\TT^d).
\end{equation*}
When equipped with $\lan\cdot, \cdot\ran_{\La_{s,\bnu}}$, $H^s(\TT^d)$ is denoted by $H_{\La_{s, \bnu}}(\TT^d)$ instead.
The Sobolev space $H_{\La_{s,\bnu}}$ is unitarily equivalent 
to the $L^2$ space, that is,
\begin{equation*}
\La_{s,\bnu} H_{\La_{s,\bnu}}(\TT^d)\cong L^2(\TT^d), \ \ 
\text{or}\ \ H_{\La_{s,\bnu}}(\TT^d)\cong \La_{s, \bnu}^{-1} L^2(\TT^d). 
\end{equation*}

We claim that the spaces  $H_{\La_{s,\bnu}}(\TT^d)$ and $H^s_\bnu(\TT^d)$, which are identical as the set of $s$-order Sobolev functions, have comparable inner products in the following sense:
there is $C_1=C_1(d, s)>0$ such that 
\begin{equation}\label{choose: C1}
\frac{1}{C_1}\left|\lan \varphi, \psi\ran_{{s, \bnu}}\right| \le \left|\lan \varphi, \psi\ran_{\La_{s, \bnu}}\right| \le C_1\left|\lan \varphi, \psi\ran_{{s, \bnu}}\right|  , \ \ \text{for any}\ \varphi, \psi\in H^s(\TT^d).
\end{equation}
To see this, 
recall that $H^s_\bnu(\TT^d)$ is equipped with the $\lan\bnu\ran^{-1}$-scaled $s$-inner product $\lan \cdot, \cdot\ran_{s, \bnu}$ given by (\ref{def: inner product s nu}).
Alternatively, we have
$$
\Op(\lan\bnu\ran^{-s}\lan\xi\ran^s) H^s_\bnu(\TT^d)\cong L^2(\TT^d).
$$
Then the comparability of inner products simply follows from that $\la_{s, \bnu}(x, \xi)\asymp \lan\bnu\ran^{-s} \lan\xi\ran^s$, i.e., there is $C_0=C_0(d, s)>0$ such that for any $\bnu \in \ZZ^\ell$, 
\begin{equation*}
\frac{1}{C_0}\le \dfrac{\la_{s, \bnu}(x, \xi)}{ \lan\bnu\ran^{-s}\lan\xi\ran^s}\le C_0, \ \ \text{for any}\ (x,\xi)\in T^*\TT^d.
\end{equation*}

\subsection{Proof of Proposition~\ref{Pprop1}} 

Recall that $\wF_\bnu: H^s_\bnu(\TT^d)\to H^s_\bnu(\TT^d)$ is defined in \eqref{Fnu}. Switching to the inner product $\lan\cdot, \cdot\ran_{\La_{s,\bnu}}$, we mainly study the operator $\wF_\bnu: H_{\La_{s,\bnu}}(\TT^d)\to H_{\La_{s,\bnu}}(\TT^d)$ instead.

\begin{proof}[Proof of Proposition~\ref{Pprop1}]
Let $s<0$ and $\bnu\in \ZZ^\ell$ be fixed. By the formula of $\wF_\bnu$ in \eqref{Fnu}, the Fourier transform \eqref{fdefFTransf} and inverse transform \eqref{fdefinvFTransf}, we rewrite
\begin{equation*}
\wF_\bnu \varphi (x)
=\sum_{\xi\in \ZZ^d}\int_{\TT^d} e^{i2\pi\bnu\cdot \tau(x)}e^{i2\pi[Tx\cdot \xi - y\cdot \xi]} 
\varphi(y) dy.
\end{equation*}  
In the above formula, $\wF_\bnu$ defines a classical (i.e., $\hb=1$) toroidal Fourier series operator (FSO)
with amplitude $a^\bnu(x,\xi)=e^{i2\pi\bnu\cdot \tau(x)}\in S^0$ and phase $S(x,\xi)=Tx\cdot \xi\in S^1$. 
For more details on FSO, we refer the readers to \cite{MR2567604}, Section 4.13.

Let $\Phi_1(a^\bnu, S)$ be the classical toroidal FIO given by Definition~\ref{DefhFIO} 
with the same amplitude and phase. It is shown in \cite{MR2567604} that 
the difference operator $\wF_\bnu-\Phi_1(a^\bnu, S)$ is smoothing and thus compact. 
Therefore, the result in Proposition~\ref{Pprop1} holds for $\wF_\bnu$ if and only if
it holds for $\Phi_1(a^\bnu, S)$.
In the rest of the proof, we shall analyze the classical toroidal FIO $\Phi_1(a^\bnu, S)$,
but still denote it by $\wF_\bnu$ for notational convenience.

The canonical transformation $\CF: (x,\xi)\mapsto (y,\eta)$ associated to 
$\hF_\bnu$ is given by
\begin{equation*}
y=Tx, \ \ \ \ \eta=[(D_xT)^t]^{-1} \xi,
\end{equation*}
which is irrelevant to $\bnu$ since the phase function $S$ is independent of $\bnu$.

We have the following commutative diagram
\begin{displaymath}
\begin{CD}
H_{\La_{s,\bnu}}(\TT^d) @> \hF_\bnu >>  H_{\La_{s,\bnu}}(\TT^d)\\
@ V \La_{s,\bnu} VV  @VV \La_{s, \bnu} V\\
L^2(\TT^d) @> Q_\bnu >> L^2(\TT^d)\ ,
\end{CD}
\end{displaymath}
where $Q_\bnu=\La_{s, \bnu} \hF_\bnu \La_{s, \bnu}^{-1}$.  We then set 
\begin{equation*}
\begin{split}
P_\bnu=&Q_\bnu^*Q_\bnu
= (\La_{s, \bnu}^{-1})^* \left[\hF_\bnu^* (\La_{s, \bnu}^*\La_{s, \bnu}) \hF_\bnu\right] \La_{s, \bnu}^{-1}\\
=& (\Op(\la_{s,\bnu})^{-1})^* \Big[\Phi(a^\bnu, S)^* \big(\Op(\la_{s,\bnu})^*\Op(\la_{s,\bnu})\big) \Phi(a^\bnu, S)\Big] \Op(\la_{s,\bnu})^{-1}.
\end{split}
\end{equation*}

By the symbol calculus (Theorem~\ref{Thm symbol cal}) and the Egorov's theorem (Theorem~\ref{ThmEgorov2}), the operator $P_\bnu$ is a classical PDO of order 0. Denote by $p_\bnu(x,\xi)$ the symbol of $P_\bnu$.
By the $L^2$-continuity theorem (Theorem~\ref{ThmL2contl}),
$P_\bnu:L^2(\TT^d)\to L^2(\TT^d)$ is a bounded operator such that for any $\ve>0$, we can write 
$P_\bnu=K_\bnu^0(\ve)+R_\bnu^0(\ve)=:K_\bnu^0+R_\bnu^0$, where $K_\bnu^0$ is compact;
and moreover, by Lemma~\ref{LPmu symbol1} below, the definition of $g$ in Section~\ref{SSsp op} and the definition $\ga$ in \eqref{fdef gamma}, we get 
\begin{equation*}
\begin{split}
\|R_\bnu^0\|_{L^2\to L^2} 
\le &\ \sup_x\limsup_{|\xi|\to \infty} |p_\bnu(x,\xi)|+\ve \\
 =& \sup_x \limsup_{|\xi|\to \infty} \sum_{x=Ty} \CA(y) \left(\dfrac{g((D_yT)^t(\xi/\ldbrack\bnu\rdbrack))}{g(\xi/\ldbrack\bnu\rdbrack)}\right)^{2s} + \ve  \\
\le&  \sup_x  \sum_{x=Ty} \CA(y) \limsup_{|\xi|\to \infty}  \left(\dfrac{|(D_yT)^t\xi|}{|\xi|}\right)^{2s} + \ve \\
\le & \sup_x  \sum_{x=Ty} \CA(y) \ga^{2s} +\ve = \ga^{2s} +\ve.
\end{split}
\end{equation*}
Choose $\ve>0$ small enough such that 
\begin{equation}\label{def: rho1}
\rho_1:=\sqrt{\ga^{2s} +\ve}<1.
\end{equation} 
By the polar decomposition, $Q_\bnu=\sqrt{P_\bnu} U_\bnu$ for some partial isometry $U_\bnu:L^2(\TT^d) \to L^2(\TT^d)$, and thus there is also a decomposition $Q_\bnu=K_\bnu^1+R_\bnu^1$ such that $K_\bnu^1$ is compact and $\|R_\bnu^1\|_{L^2\to L^2}\le \rho_1$. By unitary equivalence between $Q_\bnu: L^2(\TT^d)\to L^2(\TT^d)$ and $\hF_\bnu: H_{\La_{s,\bnu}}(\TT^d)\to H_{\La_{s,\bnu}}(\TT^d)$, 
there is a similar decomposition 
$\hF_\bnu=K_\bnu+R_\bnu$ such that $K_\bnu$ is compact, 
and $\|R_\bnu|H_{\La_{s,\bnu}}(\TT^d)\|\le \rho_1$. 

By the choice of the constant $C_1$ in (\ref{choose: C1}),
we get 
$$  
\|R^n_\bnu|H^s_\bnu(\TT^d)\| 
\le C_1 \|R^n_\bnu|H_{\La_{s,\bnu}}(\TT^d)\|
\le C_1 \|R_\bnu|H_{\La_{s,\bnu}}(\TT^d)\|^n\le C_1\rho_1^n.
$$
This completes the proof of Proposition~\ref{Pprop1}.
\end{proof}

\begin{lemma}\label{LPmu symbol1} $P_\bnu\in \OP S^0$ has a symbol 
\begin{equation*}
p_\bnu(x,\xi)=\sum_{x=Ty} \CA(y) \left(\dfrac{g((D_yT)^t(\xi/\ldbrack\bnu\rdbrack))}{g(\xi/\ldbrack\bnu\rdbrack)}\right)^{2s} \pmod {S^{-1}},
\end{equation*}
where $\CA(y)$ is defined in (\ref{fdef CA}). 
\end{lemma}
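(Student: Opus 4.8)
The plan is to compute the symbol of
\[
P_\bnu=(\La_{s,\bnu}^{-1})^*\bigl[\wtF_\bnu^*(\La_{s,\bnu}^*\La_{s,\bnu})\wtF_\bnu\bigr]\La_{s,\bnu}^{-1}
\]
by working through this composition from the inside out, applying in turn the adjoint, composition and inverse rules of the symbol calculus (Theorem~\ref{Thm symbol cal}) together with Egorov's theorem (Theorem~\ref{ThmEgorov2}), while tracking the orders of all remainders so as to see that the final error lands exactly in $S^{-1}$. First, since the symbol $\la_{s,\bnu}(x,\xi)=h(x)^{1/2}g(\xi/\ldbrack\bnu\rdbrack)^s$ is real-valued and positive, Theorem~\ref{Thm symbol cal}(1) gives $\La_{s,\bnu}^*=\La_{s,\bnu}\pmod{\OP S^{s-1}}$, and Theorem~\ref{Thm symbol cal}(2) then gives that $\La_{s,\bnu}^*\La_{s,\bnu}\in\OP S^{2s}$ has symbol $\la_{s,\bnu}^2=h(x)\,g(\xi/\ldbrack\bnu\rdbrack)^{2s}\pmod{S^{2s-1}}$.

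Next I would apply Theorem~\ref{ThmEgorov2} to $\wtF_\bnu^*\Op(\la_{s,\bnu}^2)\wtF_\bnu$, where (as set up in the proof of Proposition~\ref{Pprop1}) $\wtF_\bnu=\Phi(a^\bnu,S)$ has amplitude $a^\bnu(x,\xi)=e^{i2\pi\bnu\cdot\tau(x)}\in S^0$, phase $S(x,\xi)=Tx\cdot\xi\in S^1$, and canonical transformation $\CF(x,\xi)=(Tx,[(D_xT)^t]^{-1}\xi)$. One checks the hypotheses of the theorem: since $T$ is a $C^\infty$ expanding map it is a covering map, so $\CF$ is a surjective local diffeomorphism of $T^*\TT^d$ with finitely many inverse branches, and for each fixed $x$ the fiber map $\xi\mapsto[(D_xT)^t]^{-1}\xi$ is a linear bijection; moreover $|a^\bnu|^2\equiv 1$ and $\bigl|\det(\p^2 S/\p x\p\xi)\bigr|=|\det D_xT|=|\Jac(T)(x)|$. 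Egorov's theorem then produces a symbol for $\wtF_\bnu^*\Op(\la_{s,\bnu}^2)\wtF_\bnu\in\OP S^{2s}$ which, after relabelling the summation variable as $y$ (so that $Ty=x$ and $\xi$ pulls back to $(D_yT)^t\xi$), reads
\[
\wa(x,\xi)=\sum_{Ty=x}\frac{h(y)}{|\Jac(T)(y)|}\,g\bigl((D_yT)^t(\xi/\ldbrack\bnu\rdbrack)\bigr)^{2s}\pmod{S^{2s-1}}.
\]

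Finally, since $\la_{s,\bnu}$ is elliptic of order $s$ (its symbol is $\asymp\lan\xi\ran^s$ uniformly in $\bnu$), $\La_{s,\bnu}$ is invertible with $\La_{s,\bnu}^{-1}\in\OP S^{-s}$ of symbol $\la_{s,\bnu}^{-1}\pmod{S^{-s-1}}$ by Theorem~\ref{Thm symbol cal}(3), and $(\La_{s,\bnu}^{-1})^*$ has the same symbol modulo $S^{-s-1}$. Two applications of the composition rule then give that $P_\bnu\in\OP S^0$ with symbol $\la_{s,\bnu}^{-2}\wa=h(x)^{-1}g(\xi/\ldbrack\bnu\rdbrack)^{-2s}\,\wa(x,\xi)\pmod{S^{-1}}$: the point is that the $S^{2s-1}$ remainder carried by $\wa$ is multiplied by $\la_{s,\bnu}^{-2}\in S^{-2s}$ and so becomes $S^{-1}$, and the two remainders produced by the compositions are of order $s-1$ and $-1$ respectively, hence also $S^{-1}$. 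Substituting the formula for $\wa$, cancelling $g(\xi/\ldbrack\bnu\rdbrack)^{-2s}$ against the factors inside the sum, and using $h(Ty)=h(x)$ there, I obtain
\[
p_\bnu(x,\xi)=\sum_{Ty=x}\frac{1}{|\Jac(T)(y)|}\frac{h(y)}{h(Ty)}\left(\frac{g((D_yT)^t(\xi/\ldbrack\bnu\rdbrack))}{g(\xi/\ldbrack\bnu\rdbrack)}\right)^{2s}\pmod{S^{-1}},
\]
which is exactly the claimed expression with $\CA(y)$ as in \eqref{fdef CA}.

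The argument is more bookkeeping than idea; the delicate points are (a) tracking the orders of the remainders carefully enough through the adjoint, Egorov and inverse steps to be certain the final error is exactly $S^{-1}$ rather than merely some negative order, and (b) verifying that the canonical transformation $\CF$ genuinely satisfies the global hypotheses of Theorem~\ref{ThmEgorov2} — surjective local diffeomorphism, finitely many inverse branches, and fiberwise bijectivity — which is precisely where the uniform expansion of $T$ enters.
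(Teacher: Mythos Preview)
Your proposal is correct and follows essentially the same route as the paper's proof: compute the symbol of $\La_{s,\bnu}^*\La_{s,\bnu}$ via the adjoint and composition rules, apply Egorov's theorem (Theorem~\ref{ThmEgorov2}) to conjugate by the FIO $\wtF_\bnu$, then sandwich with $\La_{s,\bnu}^{-1}$ and $(\La_{s,\bnu}^{-1})^*$ using the inverse and composition rules, tracking remainder orders throughout. Your explicit verification of the hypotheses of Theorem~\ref{ThmEgorov2} for the canonical map $\CF(x,\xi)=(Tx,[(D_xT)^t]^{-1}\xi)$ and your careful accounting of the remainder orders are slightly more detailed than the paper's writeup, but the argument is the same.
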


\begin{proof}
Note that $\La_{s, \bnu}\in \OP S^s$ has a symbol $\la_{s, \bnu}$ given by (\ref{fdef las nu}).
By Theorem~\ref{Thm symbol cal}, 
$\La_{s,\bnu}^*\in \OP S^s$ has a symbol $\la_{s, \bnu} \pmod{S^{s-1}}$; 
$\La_{s,\bnu}^{-1}\in \OP S^{-s}$ has a symbol 
$\la_{s, \bnu}^{-1} \pmod{S^{-s-1}}$, and so does  $(\La_{s,\bnu}^{-1})^*\in \OP S^{-s}$.
Further, $\La_{s,\bnu}^*\La_{s,\bnu}\in \OP S^{2s}$ has a symbol $\la_{s,\bnu}^2 \pmod{S^{2s-1}}$. 
Then by Egorov's theorem \ref{ThmEgorov2}, 
$\hF_\bnu^* (\La_{s,\bnu}^*\La_{s,\bnu}) \hF_\bnu\in \OP S^{2s}$ has a symbol 
\begin{eqnarray*}
\wa(y,\eta)
&=&\sum_{\substack{y=Tx, \\ \eta=[(D_xT)^t]^{-1} \xi}} \la_{s,\bnu}^2(x,\xi) \left| e^{i2\pi \bnu\cdot \tau(x)} \right|^2 |\det(D_xT)^t|^{-1} \pmod{S^{2s-1}}\\
\\
&=& \sum_{y=Tx} \dfrac{\la_{s,\bnu}^2(x, (D_xT)^t\eta )}{|\Jac(T)(x)|} \pmod{S^{2s-1}}.
\end{eqnarray*}
Use the composition rule and recall the definition of $\la_{s, \bnu}$ in (\ref{fdef las nu}), we have $P_\bnu\in \OP S^0$ with a symbol 
\begin{eqnarray*}
p_\bnu(y,\eta)
&=&\sum_{y=Tx} \dfrac{\la_{s,\bnu}^2(x, (D_xT)^t\eta)} 
     {|\Jac(T)(x)|} \dfrac{1}{\la_{s,\bnu}^2(y,\eta)} \pmod{S^{-1}}\\
&=& \sum_{y=Tx} \dfrac{1}{|\Jac(T)(x)|} \dfrac{h(x)}{h(y)} \dfrac{g((D_xT)^t(\eta/\ldbrack\bnu\rdbrack))^{2s}}{g(\eta/\ldbrack\bnu\rdbrack)^{2s}}\pmod{S^{-1}} \\
&=& \sum_{y=Tx} \CA(x) \left(\dfrac{g((D_xT)^t(\eta/\ldbrack\bnu\rdbrack))}{g(\eta/\ldbrack\bnu\rdbrack)}\right)^{2s} \pmod{S^{-1}}.
\end{eqnarray*}
This is what we need.
\end{proof}

\subsection{Proof of Proposition~\ref{Pprop2}}

To prove Proposition~\ref{Pprop2}, we relate the semiclassical parameter $\hb$ with a given frequency  
$\bnu\in \ZZ^\ell$ by setting $\hb=1/\ldbrack\bnu\rdbrack$. In this way, 
we study the operator $\hF_\bnu^n$ as an $\hb$-scaled toroidal FSO, and hence separate the
dependence of $\hF_\bnu^n$ on the frequency $\bnu$ into two parts: the dependence on the modulus
$\ldbrack\bnu\rdbrack=1/\hb$ and that on the direction vector $\bn_\bnu:=\bnu/\ldbrack\bnu\rdbrack$.
The key step of the proof is the estimate stated and proved 
in Lemma~\ref{Lmain estmt} in the next section.

\begin{proof}[Proof of Proposition~\ref{Pprop2}]
Let $s<0$ and assume that $\tau$ is not an essential coboundary 
over $T:\TT^d\to \TT^d$.
Given $\bnu\in \ZZ^\ell$, set $\hb=1/\ldbrack\bnu\rdbrack$, 
where $\ldbrack\bnu\rdbrack=\max\{1, |\bnu|\}$. 
Also denote by $\bn_\bnu=\bnu/\ldb \bnu\rdb$ the direction vector of $\bnu$, and note that either $\bn_\bnu=\bzero$ (only if $\bnu=\bzero$) or $\bn_\bnu$ lies on the $(\ell-1)$-dimensional unit sphere  $\SSS^{\ell-1}$.

For any $n\in \NN$, 
the operator $\wF_\bnu^n$ can then be rewritten as
\begin{equation*}
\begin{split}
\wF_\bnu^n \varphi(x) 
=& \varphi(T^n x)e^{i2\pi \bnu\cdot \sum_{k=0}^{n-1} \tau(T^kx) } \\
=& \sum_{\xi\in \ZZ^d} \int_{\TT^d} e^{i 2\pi \{ [T^nx\cdot \xi+ \bnu
  \cdot  \sum_{k=0}^{n-1} \tau(T^kx)] - y\cdot \xi \}} \varphi(y) dy\\
=& \sum_{\xi\in \hb\ZZ^d} \int_{\TT^d} e^{i 2\pi \frac{1}{\hb} \{ [T^nx\cdot \xi+ \bn_\bnu
  \cdot  \sum_{k=0}^{n-1} \tau(T^kx)] - y\cdot \xi \}} \varphi(y) dy.
\end{split}
\end{equation*}
That is, $\wF_\bnu^n$ is regarded as an $\hb$-scaled toroidal Fourier series operator (FSO) 
with amplitude $a= 1$ and phase 
\begin{equation}\label{def phase nu n}
S_{\bn_\bnu,  n}(x,\xi)
=T^nx\cdot \xi+ \bn_\bnu \cdot  \sum_{k=0}^{n-1} \tau(T^kx).  
\end{equation}
Please see \cite{MR2567604}, Section 4.13 for details about FSO. 

Let $\Phi_\hb(1, S_{\bn_\bnu,  n})$ be the $\hb$-scaled toroidal FIO given by Definition~\ref{DefhFIO}.
It can be shown that the difference $\wF_\bnu^n-\Phi_\hb(1, S_{\bn_\bnu,  n})=\hb R$ for some smoothing operator $R$. 
Therefore, as $\hb=1/\ldbrack\bnu\rdbrack\to 0$, 
the result in Proposition~\ref{Pprop2} holds for $\wF_\bnu^n$ if and only if
it holds for $\Phi_\hb(1, S_{\bn_\bnu, n})$.
In the rest of the proof, we shall analyze the $\hb$-scaled toroidal FIO $\Phi_\hb(1, S_{\bn_\bnu,  n})$,
but still denote it by $\wF_\bnu^n$ for notational convenience.

Note that the canonical transformation 
$\CF_\hb: (x, \hb\xi)\mapsto (y, \hb\eta)$ associated to $\hF_{\bnu}^n$ 
is given by
\begin{equation*}
y=T^nx, \ \ \ \ \eta=[(D_xT^n)^t]^{-1} [\xi-W_{\bn_\bnu,n}(x)],
\end{equation*}
where 
\begin{equation}\label{fdef Wnun}
W_{\bn_\bnu, n}(x)=W_n(x)\bn_\bnu \ \ \text{and}\ \ \ W_n(x)=\sum_{k=0}^{n-1} (D_xT^k)^t (D_{T^k x}\tau)^t.
\end{equation}

By \eqref{fdef las}, \eqref{fdef las nu} and that $\hb=1/\ldb\bnu\rdb$, we rewrite $\la_{s,\bnu}(x,\xi)=\la_s(x, \hb \xi)$, and hence $\La_{s,\bnu}=\Op(\la_{s,\bnu})=\Op_\hb(\la_s)\in \OP_\hb S^s$.
The following commutative diagram
\begin{displaymath}
\begin{CD}
H_{\La_{s,\bnu}}(\TT^d) @> \hF_{\bnu}^n >>  H_{\La_{s,\bnu}}(\TT^d)\\
@ V \La_{s,\bnu} VV  @VV \La_{s,\bnu} V\\
L^2(\TT^d) @> \wQ_{\bnu, n} >> L^2(\TT^d)
\end{CD}
\end{displaymath}
suggests that we should instead study the operator
\begin{equation}\label{Pm semiclass}
\begin{split}
&\wP_{\bnu, n}
=\wQ_{\bnu,n}^*\wQ_{\bnu, n}= (\La_{s,\bnu}^{-1})^* \left[\left(\hF_{\bnu}^n\right)^* (\La_{s,\bnu}^*\La_{s,\bnu}) \hF_{\bnu}^n \right] \La_{s,\bnu}^{-1} \\
=& (\Op_\hb(\la_s)^{-1})^* \Big[(\Phi_\hb(1, S_{\bn_\bnu, n}))^* \big(\Op_\hb(\la_s)^*\Op_\hb(\la_s)\big) \Phi_\hb(1, S_{\bn_\bnu, n})\Big] \Op_\hb(\la_s)^{-1}.
\end{split}
\end{equation}

By the $\hb$-scaled symbol calculus (Theorem~\ref{Thm symbol cal hb}) 
and the $\hb$-scaled version of Egorov's theorem (Theorem~\ref{ThmEgorov2}), 
we have that $\wP_{\bnu, n}\in \OP_\hb S^0$, and it has a symbol 
of the form $\wtp_{\bn_\bnu, n} +\hb \wtr_{\bn_\bnu, n}$ 
given by Lemma~\ref{LPmu symbol2} below.  
Hence, by the $\hb$-scaled $L^2$-continuity theorem (Theorem \ref{ThmL2cont2}),
we may choose a sufficiently small $\ve_0>0$ such that 
\begin{equation*}
\|\wP_{\bnu, n}|L^2(\TT^d)\|
\le  \sup_{(x,\xi)\in T^*\TT^d} \wtp_{\bn_\bnu, n} (x,\xi)  +\ve_0
  +\hb C_{k_2}(\ve_0, \wtp_{\bn_\bnu,n}, \wtr_{\bn_\bnu, n}).
\end{equation*}
Moreover, by part (2) in Lemma~\ref{LPmu symbol2}, 
we get that there exists $L(n)>0$ independent of $\bnu$ such that 
$C_{k_2}(\ve_0, \wtp_{\bn_\bnu,n}, \wtr_{\bn_\bnu, n})\le L(n)$.
So we obtain
\begin{equation}\label{est Pnun0}
\|\wP_{\bnu, n}|L^2(\TT^d)\|
\le  \sup_{(x,\xi)\in T^*\TT^d} \wtp_{\bn_\bnu, n} (x,\xi) +\ve_0 +\hb L(n).
\end{equation}

By Sublemma~\ref{SLwtp le 1} (1) in the next section, 
we have that for any $n\in \NN$,
\begin{equation*}
\|\wP_{\bnu, n}|L^2(\TT^d)\|\le 2+L(n),
\end{equation*} 
and hence
\begin{equation}\label{est Fnun1}
\|\hF_\bnu^{n}|H_{\La_{s,\bnu}}(\TT^d)\|
= \sqrt{\|\wP_{\bnu, n}|L^2(\TT^d)\|}
\le \sqrt{2+L(n)}.
\end{equation}

Furthermore, by \eqref{est Pnun0} and our key lemma - Lemma~\ref{Lmain estmt} in the next section,
there are $n_0\in \NN$ and $\wtp_0<1$ such that for all $\bnu\in \ZZ^\ell\backslash\{\bzero\}$,
\begin{equation}\label{est Pnun1}
\begin{split}
\|\wP_{\bnu, n_0}|L^2(\TT^d)\| 
& \le  \sup_{(x,\xi)\in T^*\TT^d} |\wtp_{\bn_\bnu, n_0} (x,\xi)| + \ve_0 + \hb L(n_0) \\
& \le  \wtp_0+ \ve_0 + \dfrac{L(n_0)}{\ldb\bnu\rdb}.
\end{split}
\end{equation}
Note that we are allowed to make $\ve_0$ sufficiently small such that $\wtp_0+ \ve_0<1$
(although the function $L(\cdot)$ may be enlarged).
Then we choose $\nu_1>0$ such that 
\begin{equation}\label{def: rho2}
\rho_2:=\left(\wtp_0 + \ve_0 +\dfrac{L(n_0)}{\nu_1}\right)^{1/2n_0}<1.
\end{equation}
By \eqref{est Pnun1}, we have for all 
$\bnu\in \ZZ^\ell\backslash\{\bzero\}$ with $\lbnur=|\bnu|\ge \nu_1$,
\begin{equation}\label{est Fnun2}
\|\hF_\bnu^{n_0}|H_{\La_{s,\bnu}}(\TT^d)\|
= \sqrt{\|\wP_{\bnu, n_0}|L^2(\TT^d)\|}
\le  \sqrt{\wtp_0+\ve_0+\dfrac{L(n_0)}{\lbnur}}
\le \rho_2^{n_0}.
\end{equation}

Now for any $n\in \NN$, we write $n=kn_0+j$, where $k\in \NN_0$ and $0\le j<n_0$. Then by \eqref{est Fnun1} and \eqref{est Fnun2}, we have for all $\bnu\in \ZZ^\ell$ with $|\bnu|\ge\nu_1$, 
\begin{equation*}
\|\hF_\bnu^{n}|H_{\La_{s,\bnu}}(\TT^d)\|\le  \|\hF_\bnu^{n_0}|H_{\La_{s,\bnu}}(\TT^d)\|^k \|\hF_\bnu^{j}|H_{\La_{s,\bnu}}(\TT^d)\| 
\le  \rho_2^{kn_0} \sqrt{2+L(j)}\le C_2'\rho_2^n,
\end{equation*}
where we set
\begin{equation*}
C_2':=\max_{1\le j<n_0} \rho_2^{-j}\sqrt{2+L(j)}.
\end{equation*}

Switch back to the inner product $\lan \cdot, \cdot\ran_{s, \bnu}$, and recall the choice of $C_1$ in (\ref{choose: C1}). We take $C_2=C_1C_2'$, then 
for all $\bnu\in \ZZ^\ell$ with $|\bnu|\ge\nu_1$, 
\begin{equation*}
\|\hF_\bnu^{n}|H^s_\bnu(\TT^d)\|\le C_2\rho_2^n.
\end{equation*}
This completes the proof of Proposition~\ref{Pprop2}.
\end{proof}

\begin{lemma}\label{LPmu symbol2} 
Given $\bnu\in \ZZ^\ell$, let $\hb=1/\ldb\bnu\rdb$. 
For any $n\in \NN$, 
$\wP_{\bnu, n}\in \OP_\hb S^0$ has a symbol of the form $\wtp_{\bn_\bnu, n}+\hb \wtr_{\bn_\bnu, n}$, where $\wtp_{\bn_\bnu,n}\in S^0$ and $\wtr_{\bn_\bnu,n}\in S^{-1}$, such that
\begin{enumerate}
\item $\wtp_{\bn_\bnu, n}$ is positive, and is given by
\begin{equation}\label{fwtp nun}
\wtp_{\bn_\bnu, n} (x, \xi)= \sum_{x=T^ny} \CA_n(y) \left( \dfrac{g((D_yT^n)^t\xi +W_{\bn_\bnu, n}(y))}{g(\xi)}  \right)^{2s},
\end{equation}
where $\CA_n(y)$ is given in (\ref{fdef CAn}) and 
$W_{\bn, n}(y)$ is given in (\ref{fdef Wnun});
\item Let $C_{k_2}(\cdot, \cdot)$ be as introduced in Theorem \ref{ThmL2cont2}. There is $L(n)=L(n; d, T, \tau)$ independent of $\bnu$ such that
\begin{equation*}
C_{k_2}(\wtp_{\bn_\bnu, n}, \wtr_{\bn_\bnu, n})\le L(n).
\end{equation*}
\end{enumerate}
\end{lemma}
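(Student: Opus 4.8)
My plan is to prove this as the $\hb$-scaled, $n$-iterate analogue of Lemma~\ref{LPmu symbol1}. Recall from \eqref{Pm semiclass} that $\wP_{\bnu,n}=(\Op_\hb(\la_s)^{-1})^*\{\Phi_\hb(1,S_{\bn_\bnu,n})^*[\Op_\hb(\la_s)^*\Op_\hb(\la_s)]\Phi_\hb(1,S_{\bn_\bnu,n})\}\Op_\hb(\la_s)^{-1}$, where $\Op_\hb(\la_s)=\La_{s,\bnu}\in\OP_\hb S^s$ has the positive $\hb$-symbol $\la_s(x,\xi)=h(x)^{1/2}g(\xi)^s$ of \eqref{fdef las}. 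The first step is the $\hb$-scaled symbol calculus of Theorem~\ref{Thm symbol cal hb}: $\Op_\hb(\la_s)^*$ has $\hb$-symbol $\la_s\pmod{\hb S^{s-1}}$; the operators $\Op_\hb(\la_s)^{-1}$ and $(\Op_\hb(\la_s)^{-1})^*$ lie in $\OP_\hb S^{-s}$ with $\hb$-symbol $\la_s^{-1}\pmod{\hb S^{-s-1}}$; and $\Op_\hb(\la_s)^*\Op_\hb(\la_s)\in\OP_\hb S^{2s}$ has $\hb$-symbol $\la_s^2\pmod{\hb S^{2s-1}}$. Tracking seminorms here is harmless, since $\la_s$ is a fixed symbol whose seminorms depend only on $d,s$ and the density $h$.

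The second step is Egorov. Since $S_{\bn,n}$ from \eqref{def phase nu n} is the generating function, \eqref{def canonical FIO} gives the canonical transformation $\CF_\hb\colon(x,\hb\xi)\mapsto(y,\hb\eta)$ of $\Phi_\hb(1,S_{\bn_\bnu,n})$ by $y=T^nx$ and $\xi=(D_xT^n)^t\eta+W_{\bn_\bnu,n}(x)$, with $W_{\bn,n}$ as in \eqref{fdef Wnun}; this $\CF_\hb$ is a surjective local diffeomorphism of $T^*\TT^d$ with finitely many inverse branches, it is affine and bijective in $\xi$ for each $x$, and $\p^2 S_{\bn_\bnu,n}/\p x\,\p\xi=(D_xT^n)^t$ has determinant of modulus $|\Jac(T^n)(x)|$, so the $\hb$-scaled Egorov theorem (Remark~\ref{RmkEgorov}) applies with amplitude $b\equiv1$. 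Combining it with the composition rule of Theorem~\ref{Thm symbol cal hb} to absorb the two outer factors $\la_s^{-1}$, I obtain $\wP_{\bnu,n}\in\OP_\hb S^0$ with $\hb$-symbol
$$
\la_s(x,\xi)^{-2}\sum_{x=T^ny}\frac{\la_s\bigl(y,(D_yT^n)^t\xi+W_{\bn_\bnu,n}(y)\bigr)^2}{|\Jac(T^n)(y)|}\pmod{\hb S^{-1}}.
$$
Substituting $\la_s(x,\xi)=h(x)^{1/2}g(\xi)^s$ and recalling $\CA_n$ from \eqref{fdef CAn} collapses the leading term to the claimed $\wtp_{\bn_\bnu,n}$ of \eqref{fwtp nun}, and what is left over is $\hb\wtr_{\bn_\bnu,n}$ with $\wtr_{\bn_\bnu,n}\in S^{-1}$; positivity of $\wtp_{\bn_\bnu,n}$ is immediate from $\CA_n>0$ and $g>0$. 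To see $\wtp_{\bn_\bnu,n}\in S^0$ I would use $g\ge1$ together with $|(D_yT^n)^t\xi|\ge\ga^n|\xi|$ from \eqref{fdef gamma}: for $|\xi|$ large the argument $(D_yT^n)^t\xi+W_{\bn_\bnu,n}(y)$ lies in the region where $g$ equals the Euclidean norm, so each summand is comparable to $1$ and its derivatives in $x$ and $\xi$ satisfy the $S^0$ estimates.

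For part~(2), the point is that the only $\bnu$-dependence of $\wtp_{\bn_\bnu,n}$ and $\wtr_{\bn_\bnu,n}$ enters through the direction $\bn_\bnu=\bnu/\ldb\bnu\rdb\in\SSS^{\ell-1}\cup\{\bzero\}$, and only via the term $W_{\bn_\bnu,n}(y)=W_n(y)\bn_\bnu$, which is linear in $\bn_\bnu$ with $|\bn_\bnu|\le1$ and $y$-derivatives bounded in terms of $n,d,T,\tau$ only. Hence all seminorms $\CN_k(S_{\bn_\bnu,n})$ (see \eqref{semi norm Snun}) and $\CN_k(\la_s)$ are bounded uniformly in $\bnu$, and feeding these into the seminorm-tracking clauses of Theorems~\ref{Thm symbol cal hb} and~\ref{ThmEgorov2} and of Remark~\ref{RmkEgorov} bounds $\CN_{k_2}(\wtp_{\bn_\bnu,n})$ and $\CN_{k_2}(\wtr_{\bn_\bnu,n})$, and therefore $C_{k_2}(\wtp_{\bn_\bnu,n},\wtr_{\bn_\bnu,n})$, by a constant $L_2(n)=L_2(n;d,T,\tau)$ independent of $\bnu$.

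The delicate step will be the $S^0$ verification with uniform seminorm control: because $g$ is defined piecewise and we raise it to the negative power $2s$, and because the argument of $g$ in $\wtp_{\bn_\bnu,n}$ couples the pre-image point $y$ to the frequency $\xi$, one must use the uniform expansion of $T^n$ to control the large-$\xi$ regime and then push through the chain-rule bookkeeping for all mixed derivatives, keeping every constant uniform over the finitely many inverse branches and over $\bn_\bnu$. Everything else is a direct application of the $\hb$-scaled calculus recorded in Section~\ref{SSSemi torus}.
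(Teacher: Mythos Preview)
Your proposal is correct and follows essentially the same route as the paper: apply the $\hb$-scaled symbol calculus of Theorem~\ref{Thm symbol cal hb} to $\La_{s,\bnu}^{\pm1}$ and their adjoints, invoke the $\hb$-scaled Egorov theorem (Theorem~\ref{ThmEgorov2} with Remark~\ref{RmkEgorov}) for the FIO conjugation, and then track seminorms through the phase $S_{\bn_\bnu,n}$, whose $\bnu$-dependence enters only via $\bn_\bnu$. The separate $S^0$ verification you flag as delicate is in fact automatic, since each step of the calculus already lands in the correct symbol class.
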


\begin{proof}
Recall that $\La_{s, \bnu}=\Op_\hb(\la_s)\in \OP_\hb S^s$. 
By Theorem \ref{Thm symbol cal hb},
$\La_{s,\bnu}^*\in \OP_\hb S^s$ has a symbol $\la_s \pmod{\hb S^{s-1}}$, and 
$\La_{s,\bnu}^{-1}$, $(\La_{s,\bnu}^{-1})^*\in \OP_\hb S^{-s}$ 
both have a symbol $\la_s^{-1} \pmod{\hb S^{-s-1}}$. 
Further, $\La_{s,\bnu}^*\La_{s,\bnu}\in \OP_\hb S^{2s}$ 
has a symbol $\la_s^2 \pmod{\hb S^{2s-1}}$.
By the $\hb$-scaled version of the Egorov's theorem (see Theorem~\ref{ThmEgorov2} and Remark~\ref{RmkEgorov}),
$\wtF_{\bnu, n}^* (\La_{s,\bnu}^*\La_{s,\bnu}) \wtF_{\bnu, n}\in \OP_\hb S^{2s}$ 
has a symbol 
\begin{eqnarray*}
\wa_n(x,\xi)&=&\sum_{ \substack{x=T^n y, \\ \xi=[(D_yT^n)^t]^{-1} [\eta-W_{\bn_\bnu, n}(y)]}} \la_s^2(y,\eta)\cdot 1^2\cdot |\det(D_yT^n)^t|^{-1} \pmod{\hb S^{2s-1}}\\
\\
&=& \sum_{x=T^n y} \dfrac{\la_s^2(y, (D_yT^n)^t\xi + W_{\bn_\bnu, n}(y))}{|\Jac(T^n)(y)|} \pmod{\hb S^{2s-1}}.
\end{eqnarray*}
By composition rule again, $\wP_{\bnu, n}\in \OP_\hb S^0$ has a symbol 
\begin{eqnarray*}
\wtp_{\bn_\bnu, n}(x,\xi)
&=&\sum_{x=T^n y} \dfrac{\la_s^2(y, (D_yT^n)^t\xi + W_{\bn_\bnu, n}(y))} 
     {|\Jac(T^n)(y)|} \dfrac{1}{\la_s^2(x,\xi)} \pmod{\hb S^{-1}}\\
&=& \sum_{x=T^n y} \dfrac{1}{|\Jac(T^n)(y)|} \dfrac{h(y)}{h(x)} \dfrac{g((D_yT^n)^t\xi + W_{\bn_\bnu, n}(y))^{2s}}{g(\xi)^{2s}} \pmod{\hb S^{-1}}\\
&=& \sum_{x=T^n y} \CA_n(y) \left(\dfrac{g((D_yT^n)^t\xi + W_{\bn_\bnu, n}(y))}{g(\xi)}\right)^{2s} \pmod{\hb S^{-1}}.
\end{eqnarray*}
This finishes the proof of the first part.

Since all the above modulo terms are calculated from the symbol $\la_s$ and the phase $S_{\bn_\bnu, n}$, which depends on $\bn_\bnu$ but not $\lbnur$, we can write the full symbol of $\wP_{\bnu, n}$ by $\wtp_{\bn_\bnu, n}+\hb \wtr_{\bn_\bnu, n}$ for some $\wtr_{\bn_\bnu, n}\in S^{-1}$. 

By Theorem \ref{ThmL2cont2}, $C_{k_2}(\wtp_{\bn_\bnu,n}, \wtr_{\bn_\bnu, n})$ is bounded by a constant which only depends on the $\CN_{k_2}$-seminorms of $\wtp_{\bn_\bnu,n}$ and $\wtr_{\bn_\bnu,n}$. To prove the second part of this lemma, it is sufficient to show that  the $\CN_{k_2}$-seminorms of $\wtp_{\bn_\bnu,n}$ and $\wtr_{\bn_\bnu,n}$ are bounded by a term that does not depend on $\bnu$.
Indeed, by the formula of $\wP_{\bnu, n}$ in \eqref{Pm semiclass}, the $\hb$-scaled symbol calculus and Egorov's theorem, we find that the $\CN_{k_2}$-seminorm of $\wtp_{\bn_\bnu,n}$ depends on 
$\CN_{k_2}$-seminorm of $\la_s$ and $\CN_{k_2+2}$-seminorm of $S_{\bn_\bnu, n}$; 
while the $\CN_{k_2}$-seminorm of $\wtr_{\bn_\bnu,n}$ depends on $\CN_{k_2+2}$-seminorm of $\la_s$ and $\CN_{k_2+4}$-seminorm of $S_{\bn_\bnu, n}$.
Clearly the $\CN_{k_2}$-and $\CN_{k_2+2}$-seminorms of $\la_s$ 
are independent of $\bnu$. 
Note that $S_{\bn, n}$ is given by \eqref{def phase nu n}, and then we have
\begin{equation*}\label{semi norm Snun}
\begin{split}
\CN_{k_2+2}(S_{\bn_\bnu, n})\le &\CN_{k_2+2}(T^nx\cdot \xi) +\left|\bn_\bnu \right| \CN_{k_2+2}\left(\left|\sum_{k=0}^{n-1} \tau(T^kx)\right|\right) \\
\le &\CN_{k_2+2}(T^nx\cdot \xi) + \CN_{k_2+2}\left(\left|\sum_{k=0}^{n-1} \tau(T^kx)\right|\right),
\end{split}
\end{equation*} 
which implies that the $\CN_{k_2+2}$-seminorm of $S_{\bn_\bnu, n}$ is independent of $\bnu$.
Similarly, $\CN_{k_2+4}$-seminorm of $S_{\bn_\bnu, n}$ is also independent of $\bnu$.
This is what we need.
\end{proof}

\section{Estimates of $\wtp_{\bn_\bnu, n}$: Proof of Lemma~\ref{Lmain estmt}}

\subsection{Lemma~\ref{Lmain estmt} and Its Proof}

The estimates given in Lemma~\ref{Lmain estmt} in this section 
is the most important step to prove Proposition~\ref{Pprop2}.

\begin{lemma}\label{Lmain estmt} 
If $\tau(x)$ is not an essential coboundary over $T$, 
then there exists $n_0\in \NN$ such that 
\begin{equation}\label{eq pn le 1}
\wtp_0:
=\sup_{\bnu\in \ZZ^\ell\setminus \{\bzero\}}\sup_{(x,\xi)\in T^*\TT^d} 
   \wtp_{\bn_\bnu, n_0} (x,\xi)
<1.
\end{equation}
\end{lemma}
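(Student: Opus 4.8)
The plan is to estimate $\wtp_{\bn,n}(x,\xi)$ directly from its explicit formula \eqref{fwtp nun} and show that the non-coboundary hypothesis forces the weight-ratios appearing in the sum to be uniformly small (after averaging against $\CA_n$) for some large $n=n_0$, uniformly over $\bnu\neq\bzero$ and $(x,\xi)$. Recall
\[
\wtp_{\bn,n}(x,\xi)=\sum_{x=T^ny}\CA_n(y)\left(\frac{g\big((D_yT^n)^t\xi+W_{\bn,n}(y)\big)}{g(\xi)}\right)^{2s},
\]
with $\sum_{x=T^ny}\CA_n(y)=1$ and $s<0$, so each term is bounded by $\CA_n(y)$ times a factor that is $\le 1$ precisely when the numerator $g$-argument has modulus $\ge g(\xi)$. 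Since $T$ is $\ga$-expanding, $|(D_yT^n)^t\xi|\ge\ga^n|\xi|$, so in the regime $|\xi|$ large the ``drift'' term $W_{\bn,n}(y)$ (whose norm is $\le \|W_n\|\le C\ga^n$ by \eqref{fdef Wnun}, independent of $\bn\in\SSS^{\ell-1}$) is dominated and the ratio is $\le\ga^{ns}+o(1)\to 0$. So the essential difficulty is confined to a \emph{bounded} range of $\xi$, namely $|\xi|\lesssim$ some constant times $\ga^n$, where the definition \eqref{fdef g} of $g$ (flat $\equiv 1$ below $R$, equal to $t$ above $\frac{\ga+1}{2}R$) becomes relevant and where $W_{\bn,n}(y)$ genuinely competes with $(D_yT^n)^t\xi$.

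First I would establish the ``large $|\xi|$'' bound as a preliminary Sublemma: there is $n_1$ and $\Xi>0$ (depending on $n_1$, $\ga$, $\|D\tau\|$) such that for all $n\ge n_1$, all $\bnu\neq\bzero$, and all $|\xi|\ge\Xi\,\ga^n$ (say), $\wtp_{\bn_\bnu,n}(x,\xi)\le \tfrac12$; this is exactly the computation already carried out in the proof of Proposition~\ref{Pprop1}, now with the $W$-term included and absorbed. Second, and this is the heart, I would handle $|\xi|$ in the bounded range. Here I would argue by contradiction: if no $n_0$ works, there are $n_j\to\infty$, unit vectors $\bn_j\in\SSS^{\ell-1}$, and points $(x_j,\xi_j)$ with $\wtp_{\bn_j,n_j}(x_j,\xi_j)\to 1$. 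Because $\wtp$ is a $\CA_n$-convex combination of terms $\le(\text{ratio})^{2s}$ and the map $r\mapsto r^{2s}$ is strictly convex with value $1$ at $r=1$, having the average approach $1$ forces \emph{almost every} preimage $y$ of $x_j$ to satisfy $g\big((D_yT^n)^t\xi_j+W_{\bn_j,n_j}(y)\big)\big/g(\xi_j)\to 1$, i.e. the $g$-argument must have modulus $\approx g(\xi_j)$ along (most of) the tree of preimages. Passing to a subsequence so that $\bn_j\to\bn_\infty\in\SSS^{\ell-1}$, I would extract from this near-equality — telescoping the cocycle $W_{\bn,n}$ along the inverse orbit and using the definition of $g$ together with the uniform expansion (so that $g(\xi)=|\xi|$ in the relevant range and the linear pieces line up) — the limiting identity that $\bn_\infty\cdot\tau$ is cohomologous to a constant, i.e. $\bn_\infty\cdot\tau\in\RR+\{u-u\circ T\}$ for some continuous (hence $C^\infty$, by Livsic) $u$. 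This contradicts the hypothesis that $\tau$ is not an essential coboundary, since $\bn_\infty\in\SSS^{\ell-1}\subset\RR^\ell\setminus\{\bzero\}$.

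The main obstacle I anticipate is making the compactness/contradiction argument in the second step rigorous: one must control the full tree of $\ga^n$-many preimages simultaneously, not just one branch, and convert ``the $\CA_n$-average of $(\text{ratio})^{2s}$ is close to $1$'' into a uniform statement ``$(\text{ratio})$ is close to $1$ for a large-measure set of branches'', then pass to a limit that produces a genuine coboundary equation rather than an approximate one. The technical heart is showing that the near-equality $|(D_yT^n)^t\xi+W_n(y)\bn|\approx|\xi|$, which at first glance only constrains a norm, actually pins down the direction and forces the cocycle relation; this presumably uses that the same near-equality holds after one more step of $T$ (replacing $x$ by $Tx$, $n$ by $n+1$), so that subtracting consecutive relations isolates a single increment $(D_{T^{n}y}\tau)(\text{something})$, and iterating/averaging over the orbit reconstitutes the Birkhoff sum $\sum_{k}\bn\cdot\tau(T^k\cdot)$. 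I would expect this to require a careful choice of which bounded $\xi$-range to work in (e.g. $|\xi|$ comparable to $1$, exploiting $g\equiv 1$ on $|\xi|\le R$ so the denominator is frozen) so that the numerator's behavior alone carries all the information, and then a normal-families argument on $\TT^d$ to produce $u$ as a uniform limit of the $\arg$-type functions that appeared already in the proof of Lemma~\ref{Lsp rad 2}.
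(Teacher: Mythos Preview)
Your overall contradiction strategy matches the paper's, but several specific steps are either off or too vague to work as stated.

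The large-$|\xi|$ threshold is simply $|\xi|>R$, fixed independently of $n$: by the choice of $R$ in \eqref{fdef R} one has $|\CF_{\bn,y}(\xi)|\ge\frac{\ga+1}{2}|\xi|$ whenever $|\xi|>R$, and iterating gives $\wtp_{\bn,n}(x,\xi)\le\left(\frac{\ga+1}{2}\right)^{2s}$ uniformly in $n$ (Sublemma~\ref{SLwtp le 1}(3)). So the hard region is the fixed compact set $\{|\xi|\le R\}$, not one growing like $\ga^n$. (Your bound $\|W_n\|\le C\ga^n$ is also suspect: the growth of $W_n$ is governed by $\|DT\|$, not by the lower rate $\ga$.)

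On the compact region, the key is not ``ratio $\approx 1$ for most branches'' but a clean trapping dichotomy. The paper proves directly (Sublemma~\ref{SLwtp <1}) that for each fixed $\bn\in\SSS^{\ell-1}$ there is $n_0(\bn)$ such that for every $(x,\xi)$, \emph{some} preimage escapes: $|\CF^{n_0(\bn)}_{\bn,y}(\xi)|>2R$. If this fails for some $\bn^*$, then for every $n$ there is $(x_n,\xi_n)$ with \emph{all} preimages trapped, $|\CF^k_{\bn^*,y}(\xi_n)|\le 2R$ for every $y\in T^{-n}(x_n)$ and every $0\le k\le n$. Trapping plus expansion gives not a near-equality of norms but an exponential squeeze $|\xi_n+\wW_{\bn^*,k}(y)|\le 2R\ga^{-k}$ along \emph{every} branch, where $\wW_{\bn^*,k}(y)=[(D_yT^k)^t]^{-1}W_{\bn^*,k}(y)$. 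Passing to a limit $(x^*,\xi^*)$ produces a branch-independent function $V_{\bn^*}(x)$ on $\TT^d$ satisfying $(D_xT)^tV_{\bn^*}(Tx)=V_{\bn^*}(x)+(D_x\tau)^t\bn^*$, which is the \emph{derivative} of a coboundary equation. Recovering $u$ is then a matter of showing the 1-form $V_{\bn^*}(x)\cdot dx$ is exact (closedness is immediate from \eqref{fdef Vk}; vanishing of periods uses a cancellation over the full preimage set, Claim~\ref{Claim2}) and integrating along paths---not an $\arg$-type construction as in Lemma~\ref{Lsp rad 2}. Finally, uniformity over $\bnu\neq\bzero$ is obtained afterward by continuity of $\bn\mapsto\sup_{(x,\xi)}\wtp_{\bn,n}(x,\xi)$ (Sublemma~\ref{property wtp bn n}(3)) and compactness of $\SSS^{\ell-1}$, rather than by letting $\bn_j$ vary inside the contradiction.
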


\begin{proof} 
Given $\bn\in \SSS^{\ell-1}$ and $x\in \TT^d$, 
we consider the affine map $\CF_{\bn, x}: \RR^d \to \RR^d$ given by
\begin{equation}\label{fdef CF}
\CF_{\bn, x}(\xi) =(D_xT)^t \xi + (D_x \tau)^t \bn \ \ \text{for any} \ \xi\in \RR^d,
\end{equation}
and the $n$-th iterates 
\begin{equation}\label{fdef CFn}
\CF_{\bn, x}^n(\xi)
=\prod_{k=0}^{n-1} \CF_{\bn, T^k x} (\xi)=(D_xT^n)^t\xi + W_{\bn, n}(x) \ \ \text{for any}\  n\in \NN,
\end{equation}
where $W_{\bn, n}(x)=W_n(x)\bn$, and $W_n(x)$ is given by \eqref{fdef Wnun}. 
Conventionally, we set $W_{\bn, 0}(x)=\bzero$ and $\CF_{\bn, x}^0=\id$.
We also define
\begin{equation}\label{fwtp nun1}
\wtp_{\bn, n}(x,\xi) =\sum_{x=T^n y} 
\CA_n(y)\left[ \dfrac{g\left(\CF^n_{\bn, y}(\xi) \right)}{g(\xi)}\right]^{2s},
\end{equation}
which extends the formula of $\wtp_{\bn_\bnu, n}(x,\xi)$ given by \eqref{fwtp nun} to all $\bn\in \SSS^{\ell-1}$. 
For any fixed $n\in \NN$, it is easy to check that the function $(\bn, x, \xi)\mapsto \wtp_{\bn, n}(x, \xi)$ is of class $C^\infty$.
Further, we set 
\begin{equation}\label{fdef wtp bn n}
\wtp_{\bn, n}:=\sup_{(x,\xi)\in T^*\TT^d} \wtp_{\bn, n}(x,\xi).
\end{equation}
The properties of $\wtp_{\bn, n}$ will be given by Sublemma~\ref{property wtp bn n} below.

Recall that $R$ is given by \eqref{fdef R}.
By Sublemma~\ref{SLwtp <1} below, for any $\bn\in \SSS^{\ell-1}$, there exists $n_0(\bn)\in \NN$ such that
for any $(x,\xi)\in T^*\TT^d$, 
we have $|\CF^{n_0(\bn)}_{\bn, y}(\xi)|>2 R$ for some $y\in T^{-n_0(\bn)}(x)$.
Then by \eqref{fdef CFn} and \eqref{fdef Wnun}, we have for any $\bn'\in \SSS^{\ell-1}$, 
\begin{eqnarray*}
|\CF^{n_0(\bn)}_{\bn', y}(\xi)| 
&\ge & |\CF^{n_0(\bn)}_{\bn, y}(\xi)|-|\CF^{n_0(\bn)}_{\bn', y}(\xi)-\CF^{n_0(\bn)}_{\bn, y}(\xi)|\\
&\ge & 2 R- \left|W_{n_0(\bn)}(y)\right| |\bn'-\bn| \\
&\ge & 2 R- \|DT\|\;\|D\tau\|\; n_0(\bn)\; |\bn'-\bn|.
\end{eqnarray*}
Hence  there is $\ve(\bn)>0$ such that $|\CF^{n_0(\bn)}_{\bn', y}(\xi)|> R$ whenever $|\bn'-\bn|<\ve(\bn)$. 
By Sublemma~\ref{SLwtp le 1} (2), we get $\wtp_{\bn', n_0(\bn)}(x, \xi)<1$. 
Since $\wtp_{\bn', n_0(\bn)}(x, \xi)$ is continuous with $x$ and $\xi$,
we get that for any compact set $\CU\subset T^*\TT^d$, 
$\max_{(x, \xi)\in \CU} \wtp_{\bn', n_0(\bn)}(x, \xi) <1$.
Together with Sublemma~\ref{SLwtp le 1} (3), we obtain
\begin{equation*}
\begin{split}
\wtp_{\bn', n_0(\bn)}=& \max\left\{\ \sup_{(x,\xi)\in \CU_R} \wtp_{\bn', n_0(\bn)}(x,\xi), \ \sup_{(x,\xi)\in T^*\TT^d\setminus \CU_R} \wtp_{\bn', n_0(\bn)}(x, \xi)\ \right\} \\
\le & \max\left\{\ \max_{(x, \xi)\in \CU_R} \wtp_{\bn', n_0(\bn)}(x, \xi),\  \left(\frac{\ga+1}{2}\right)^{2s} \right\} <1,
\end{split}
\end{equation*}
where $\CU_R:=\{(x,\xi): |\xi|\le R\}$ is a compact subdomain in $T^*\TT^d$, and $\ga$ is given by \eqref{fdef R}.
Moreover, by Sublemma~\ref{property wtp bn n} (2),
$\wtp_{\bn', n}\le  \wtp_{\bn', n_0(\bn)}<1$ for any $n\ge n_0(\bn).$

To sum up, for any $\bn\in \SSS^{\ell-1}$, there are $n_0(\bn)\in \NN$ and $\ve(\bn)>0$ such that $\wtp_{\bn', n}<1$ 
for all $\bn'\in B(\bn, \ve(\bn))$ and $n\ge n_0(\bn)$, where $B(\bn, \ve(\bn))$ denotes the open ball in $\SSS^{\ell-1}$ 
with center at $\bn$ and of radius $\ve(\bn)$. Since $\SSS^{\ell-1}$ is compact, there are $\bn_1, \bn_2, \dots, \bn_k\in \SSS^{\ell-1}$ 
such that the finite collection of open balls $\{B(\bn_j, \ve(\bn_j))\}_{1\le j\le k}$ covers ${\SSS}^{\ell-1}$. 
Therefore, if we set 
\begin{equation*}
n_0=\max\{n_0(\bn_1), \dots, n_0(\bn_k)\},
\end{equation*}
then $\wtp_{\bn, n_0}<1$ for all $\bn\in \SSS^{\ell-1}$. By Sublemma~\ref{property wtp bn n} (3), we know that the function $\bn\mapsto \wtp_{\bn, n_0}$ is continuous, then 
$\sup\limits_{\bn\in \SSS^{\ell-1}} \wtp_{\bn, n_0}=\max\limits_{\bn\in \SSS^{\ell-1}} \wtp_{\bn, n_0} <1$, 
from which (\ref{eq pn le 1}) follows. 
\end{proof}

\subsection{Sublemmas and Proofs}

Recall that $R$ and $\ga$ are given by \eqref{fdef R} and 
\eqref{fdef gamma} respectively. 

\begin{sublemma}\label{SLwtp le 1} 
Let $\bn\in \SSS^{\ell-1}$.  Then
\begin{enumerate}
\item $\wtp_{\bn, n}(x, \xi)\le 1$ for all $n\in \NN$ and $(x, \xi)\in T^*\TT^d$;
\item $\wtp_{\bn, n}(x, \xi)< 1$ if and only if there is $y\in T^{-n} x$ such that $|\CF^n_{\bn, y}(\xi)|>R$;
\item $\wtp_{\bn, n}(x, \xi)\le \left(\frac{\ga+1}{2}\right)^{2s}<1$ for all $n\in \NN$ and $(x,\xi)\in T^*\TT^d$ with $|\xi|>R$.
\end{enumerate}
\end{sublemma}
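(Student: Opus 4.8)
The plan is to derive all three parts from one elementary growth estimate for the affine maps $\CF_{\bn,z}$ on the region $\{|\eta|\ge R\}\subset\RR^d$, and then read them off from the explicit shape of $g_0$ together with the observation that $\wtp_{\bn,n}(x,\xi)$ is a weighted average, with strictly positive weights $\CA_n(y)$ summing to $1$, of the numbers $(g(\CF^n_{\bn,y}(\xi))/g(\xi))^{2s}$. The key geometric claim I would prove first is: for all $z\in\TT^d$, $\bn\in\SSS^{\ell-1}$ and $\eta\in\RR^d$ with $|\eta|\ge R$, one has $|\CF_{\bn,z}(\eta)|\ge\frac{\ga+1}{2}|\eta|$, and in particular $|\CF_{\bn,z}(\eta)|>R$. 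This follows from $|\CF_{\bn,z}(\eta)|\ge|(D_zT)^t\eta|-|(D_z\tau)^t\bn|\ge\ga|\eta|-\|D\tau\|$, where $|(D_zT)^t\eta|\ge\ga|\eta|$ since $(D_zT)^t$ has the same singular values as $D_zT$ and $T$ expands at rate $\ga$ by \eqref{fdef gamma}, combined with the choice of $R$ in \eqref{fdef R}, which guarantees $\|D\tau\|\le\frac{\ga-1}{2}R\le\frac{\ga-1}{2}|\eta|$.

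Since $\CF^n_{\bn,y}$ is a composition of $n$ maps of the form $\CF_{\bn,z}$, each carrying $\{|\eta|\ge R\}$ into itself and expanding norms by at least $\frac{\ga+1}{2}$, an easy induction gives: whenever $|\xi|\ge R$, then $|\CF^n_{\bn,y}(\xi)|\ge\left(\frac{\ga+1}{2}\right)^n|\xi|\ge\frac{\ga+1}{2}|\xi|$ for every $y\in T^{-n}x$ and every $n\in\NN$; in particular $|\CF^n_{\bn,y}(\xi)|\ge\frac{\ga+1}{2}R$, so $g(\CF^n_{\bn,y}(\xi))=|\CF^n_{\bn,y}(\xi)|$. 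Part (1) is then immediate: $g(\CF^n_{\bn,y}(\xi))\ge g(\xi)$ for every $y$ — trivially if $|\xi|\le R$ since $g(\xi)=1$, and if $|\xi|>R$ from the previous line together with $g_0(t)\le t$ for $t>R>1$ — so, as $2s<0$, every summand is $\le 1$ and $\wtp_{\bn,n}(x,\xi)\le\sum_{T^ny=x}\CA_n(y)=1$. Part (3) is just as quick: when $|\xi|>R$ the iterated estimate yields $g(\CF^n_{\bn,y}(\xi))=|\CF^n_{\bn,y}(\xi)|\ge\frac{\ga+1}{2}|\xi|\ge\frac{\ga+1}{2}g(\xi)$, so each summand is $\le\left(\frac{\ga+1}{2}\right)^{2s}$, whence $\wtp_{\bn,n}(x,\xi)\le\left(\frac{\ga+1}{2}\right)^{2s}<1$ since $\ga>1$.

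For part (2), the implication $(\Leftarrow)$ is handled by (3) when $|\xi|>R$; if instead $|\xi|\le R$ and $|\CF^n_{\bn,y_0}(\xi)|>R$ for some $y_0\in T^{-n}x$, then $g(\xi)=1$ while $g(\CF^n_{\bn,y_0}(\xi))=g_0(|\CF^n_{\bn,y_0}(\xi)|)>1$ because $g_0$ is strictly increasing on $[R,\infty)$, so the $y_0$-summand is strictly less than $1$ while the remaining summands are $\le 1$; positivity of $\CA_n(y_0)$ then forces $\wtp_{\bn,n}(x,\xi)<1$. The implication $(\Rightarrow)$ is the contrapositive: if $|\CF^n_{\bn,y}(\xi)|\le R$ for all $y\in T^{-n}x$ (a nonempty finite set, since $T$ is onto), then the geometric claim excludes $|\xi|\ge R$, so $g(\xi)=1$ and $g(\CF^n_{\bn,y}(\xi))=1$ for every $y$, whence every summand equals $1$ and $\wtp_{\bn,n}(x,\xi)=1$.

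I do not anticipate a genuine obstacle; the only points needing care are the bookkeeping around the transition interval $[R,\frac{\ga+1}{2}R)$ of $g_0$ — so that $g(\CF^n_{\bn,y}(\xi))$ really equals $|\CF^n_{\bn,y}(\xi)|$ whenever that argument has norm $\ge\frac{\ga+1}{2}R$, and so that the strict inequalities in part (2) are genuinely strict — and recalling that the transpose of a linear map expanding by $\ga$ still has least singular value $\ge\ga$. Everything else reduces to the elementary remark that a convex combination with strictly positive weights of numbers each $\le 1$ is itself $\le 1$, and is $<1$ as soon as one of the numbers is $<1$.
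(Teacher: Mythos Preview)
Your proposal is correct and follows essentially the same approach as the paper: both hinge on the one-step growth estimate $|\CF_{\bn,z}(\eta)|\ge\frac{\ga+1}{2}|\eta|$ for $|\eta|\ge R$ (derived identically from \eqref{fdef R} and \eqref{fdef gamma}), iterate it, and then read off the three parts from the fact that $\wtp_{\bn,n}(x,\xi)$ is a convex combination with weights $\CA_n(y)$ of the quotients $(g(\CF^n_{\bn,y}(\xi))/g(\xi))^{2s}$. The only difference is organizational---the paper first packages the dichotomy ``quotient $=1$ iff $|\CF^n_{\bn,y}(\xi)|\le R$'' and then deduces (1)--(3), whereas you prove (1) and (3) directly and then split (2) into cases---but the mathematical content is the same.
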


\begin{proof} 
The key observation is the following: for any $n\in \NN$, $y\in \TT^d$ and $\xi\in \RR^d$,
\begin{equation}\label{CF xi}
\left|\CF^n_{\bn, y} (\xi)\right|>\frac{\ga+1}{2} |\xi| \ \ \text{if}\  |\xi|>R. 
\end{equation}
Indeed, by the choice of $R$ in (\ref{fdef R}), we have 
$|\xi|>R>\dfrac{2\|D\tau\|}{\ga-1}$. So by \eqref{fdef CF},
\begin{eqnarray*}
\left|\CF_{\bn, y} (\xi)\right|
\ge |(D_y T)^t\xi|-\left| (D_y\tau)^t \bn\right| 
\ge \ga|\xi|-\|D\tau\|
\ge \ga|\xi|-\dfrac{\ga-1}{2}|\xi|=\dfrac{\ga+1}{2}|\xi|.
\end{eqnarray*}
Hence by induction, we have for all $n\ge 1$, 
\begin{equation*}
\left|\CF_{\bn, y}^n(\xi)\right|\ge  \left(\dfrac{\ga+1}{2}\right)^n |\xi|\ge \dfrac{\ga+1}{2} |\xi|. 
\end{equation*}
Consequently, for any $n\in \NN$, $y\in \TT^d$ and $\xi\in \RR^d$,
\begin{itemize}
\item if $\left|\CF_{\bn, y}^n(\xi)\right|\le R$, then $\left|\CF_{\bn, y}^k(\xi)\right|\le R$ for all $0\le k\le n$, and in particular, we must have $|\xi|\le R$;
\item if $\left|\CF_{\bn, y}^n(\xi)\right|> R$, then $\left|\CF_{\bn, y}^n(\xi)\right|> |\xi|$ no matter whether $|\xi|>R$ or not.
\end{itemize}
By the definition of $g(\xi)$ given by (\ref{fdef g}), the quotient 
\begin{equation*}
\left[\dfrac{g\left(\CF^n_{\bn, y}(\xi) \right)}{g(\xi)}\right]^{2s} 
\begin{cases}  = 1
 & \ \text{if}\  \left|\CF^n_{\bn, y}(\xi)\right|\le R; \\
  \\ <1 & \ \text{otherwise}. 
\end{cases}
\end{equation*}
In either case, we always get 
\begin{equation}\label{quotient le 1}
0<\left[\dfrac{g\left(\CF^n_{\bn, y}(\xi) \right)}{g(\xi)}\right]^{2s} \le 1.
\end{equation}
Recall that
$\sum\limits_{x=T^n y} \CA_n(y)=1$, 
where $\CA_n(y)$ is positive and defined by \eqref{fdef CAn}. 
Therefore, for any $n\in \NN$ and $(x,\xi)\in T^*\TT^d$,
\begin{equation*}
\wtp_{\bn, n}(x,\xi)=\sum_{x=T^n y} \CA_n(y)\left[ \dfrac{g\left(\CF^n_{\bn, y}(\xi) \right)}{g(\xi)}\right]^{2s}\le \sum_{x=T^n y} \CA_n(y)=1.
\end{equation*}
Clearly, we have that $\wtp_{\bn,n}(x,\xi)<1$ if and only if  $\left|\CF^n_{\bn, y}(\xi)\right|> R$ for some $y\in T^{-n} x$.
Moreover, if $|\xi|>R$, then by \eqref{CF xi}, we have that
\begin{equation*}
\left[\dfrac{g\left(\CF^n_{\bn, y}(\xi) \right)}{g(\xi)}\right]^{2s} \le \left(\frac{\ga+1}{2}\right)^{2s} \ \ \text{for all}\ y\in T^{-n}x,
\end{equation*}
and hence 
$\wtp_{\bn, n}(x,\xi)\le \left(\frac{\ga+1}{2}\right)^{2s}$.
\end{proof}

\begin{sublemma}\label{property wtp bn n} 
Let $\wtp_{\bn, n}$ be defined as in \eqref{fdef wtp bn n}. 
\begin{enumerate}
\item For any $\bn\in \SSS^{\ell-1}$ and $n\in \NN$, we have that $0<\wtp_{\bn, n}\le 1$;
\item For any $\bn\in \SSS^{\ell-1}$, the sequence $\{\wtp_{\bn, n}\}_{n\in\NN}$ is non-increasing;
\item For any $n\in \NN$, the function $\bn\mapsto \wtp_{\bn, n}$ is continuous.
\end{enumerate}
\end{sublemma}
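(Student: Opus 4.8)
I would establish the three assertions in order; (1) and (2) are brief, and (3) is where the real work lies.

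\emph{Proof of (1).} By Sublemma~\ref{SLwtp le 1}(1) we have $\wtp_{\bn, n}(x,\xi)\le 1$ for every $(x,\xi)\in T^*\TT^d$, hence $\wtp_{\bn, n}\le 1$; and since every term $\CA_n(y)\,[g(\CF^n_{\bn, y}(\xi))/g(\xi)]^{2s}$ in the defining sum \eqref{fwtp nun1} is strictly positive, $\wtp_{\bn, n}(x,\xi)>0$ pointwise, so $\wtp_{\bn, n}>0$.

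\emph{Proof of (2).} The plan is a one-step recursion. Splitting the sum in \eqref{fwtp nun1} over $y\in T^{-(n+1)}(x)$ according to $z:=T^ny$, which runs over $T^{-1}(x)$ while $y$ runs over $T^{-n}(z)$, I would use the cocycle relations $\CA_{n+1}(y)=\CA_n(y)\,\CA_1(z)$ (from \eqref{fdef CAn} and the chain rule for the Jacobian) and $\CF^{n+1}_{\bn,y}=\CF^n_{\bn,y}\circ\CF_{\bn,T^ny}$ (read off directly from \eqref{fdef CFn}--\eqref{fdef Wnun}). Since $\CA_1(z)$ and $\CF_{\bn,z}(\xi)$ depend only on $z$, factoring them out of the inner sum over $y\in T^{-n}(z)$ gives
\begin{equation*}
\wtp_{\bn,n+1}(x,\xi)=\sum_{Tz=x}\CA_1(z)\left[\frac{g(\CF_{\bn,z}(\xi))}{g(\xi)}\right]^{2s}\wtp_{\bn,n}\bigl(z,\CF_{\bn,z}(\xi)\bigr).
\end{equation*}
In this identity $\wtp_{\bn,n}(z,\CF_{\bn,z}(\xi))\le\wtp_{\bn,n}$, the bracketed factor lies in $(0,1]$ by \eqref{quotient le 1} (the case $n=1$), $\CA_1(z)>0$, and $\sum_{Tz=x}\CA_1(z)=1$; hence $\wtp_{\bn,n+1}(x,\xi)\le\wtp_{\bn,n}$ for all $(x,\xi)$, and taking the supremum yields $\wtp_{\bn,n+1}\le\wtp_{\bn,n}$.

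\emph{Proof of (3).} The obstacle is that \eqref{fdef wtp bn n} takes a supremum over the non-compact $T^*\TT^d$; since $(\bn,x,\xi)\mapsto\wtp_{\bn,n}(x,\xi)$ is $C^\infty$ (as noted in the proof of Lemma~\ref{Lmain estmt}), any supremum over a compact set depends continuously on $\bn$, so it suffices to realize $\wtp_{\bn,n}$ as a maximum of finitely many such. I would split $T^*\TT^d$ along $\{|\xi|=R'\}$ with $R'=\tfrac{\ga+1}{2}R$. On the compact region $\{|\xi|\le R'\}$ the map $\bn\mapsto\max_{|\xi|\le R'}\wtp_{\bn,n}(x,\xi)$ is continuous. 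On $\{|\xi|\ge R'\}$, $g(\xi)=|\xi|$ and $g(\CF^n_{\bn,y}(\xi))=|\CF^n_{\bn,y}(\xi)|$ by \eqref{CF xi}, so with $\omega=\xi/|\xi|$ and $u=1/|\xi|$,
\begin{equation*}
\wtp_{\bn,n}(x,\xi)=\sum_{T^ny=x}\CA_n(y)\,\bigl|(D_yT^n)^t\omega+u\,W_{\bn,n}(y)\bigr|^{2s}=:G(\bn,x,\omega,u),
\end{equation*}
and $G$ extends to a continuous function on the compact set $\SSS^{\ell-1}\times\TT^d\times\SSS^{d-1}\times[0,1/R']$, because the vector inside $|\cdot|^{2s}$ stays bounded away from $0$ (it is $\ge\tfrac{\ga+1}{2}$ by \eqref{CF xi} for $u>0$, and $\ge\ga^n$ at $u=0$) and the sum over preimages of $x$ varies continuously in $x$. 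Hence $\sup_{|\xi|\ge R'}\wtp_{\bn,n}(x,\xi)=\max G$ is continuous in $\bn$, and $\wtp_{\bn,n}$ is the maximum of the two continuous functions of $\bn$ obtained from the two regions. The bookkeeping in (2) and this radial compactification in (3) are the only points requiring care; everything else is standard transfer-operator manipulation.
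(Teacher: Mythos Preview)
Your proof is correct. Parts (1) and (2) follow the same lines as the paper's argument: (1) is identical, and for (2) the paper proves the slightly more general statement $\wtp_{\bn,n+m}\le\wtp_{\bn,m}$ by the same telescoping of the ratio $g(\CF^{n+m}_{\bn,y}(\xi))/g(\xi)$ and the cocycle identity for $\CA_n$; your one-step version is the case $m=1$ of that computation (with the roles of the inner and outer sums exchanged).

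Part (3) is where your route genuinely diverges. The paper does not compactify: it shows directly that the family $\{\bn\mapsto\wtp_{\bn,n}(x,\xi):(x,\xi)\in T^*\TT^d\}$ is equicontinuous by differentiating \eqref{fwtp nun1} in $\bn$ and bounding $\sup_{(x,\xi)}|\partial_{\bn}\wtp_{\bn,n}(x,\xi)|$ uniformly, using $g\ge 1$, $\|Dg\|<\infty$, and \eqref{quotient le 1}. Your approach instead exploits the explicit homogeneity of $\wtp_{\bn,n}$ for large $|\xi|$: once $|\xi|\ge\tfrac{\ga+1}{2}R$ both $g(\xi)$ and $g(\CF^n_{\bn,y}(\xi))$ reduce to norms, so the substitution $(\omega,u)=(\xi/|\xi|,1/|\xi|)$ extends $\wtp_{\bn,n}$ continuously to a compact set, and the supremum becomes a maximum. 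Both arguments are short; the paper's derivative bound is a single line once set up and uses only the crude facts $g\ge1$ and $\wtp_{\bn,n}\le1$, while your compactification avoids any differentiation and would work even if $g$ were merely continuous, at the cost of invoking the specific structure \eqref{CF xi} to control the norm away from zero up to $u=0$.
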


\begin{proof} 
As shown by Sublemma~\ref{SLwtp le 1} (1), $0<\wtp_{\bn, n}(x, \xi)\le 1$ for any $\bn\in \SSS^{\ell-1}$, $n\in\NN$ and $(x,\xi)\in T^*\TT^d$, 
and thus $0<\wtp_{\bn, n}\le 1$.

Let $\bn\in \SSS^{\ell-1}$ be fixed. For any $(x,\xi)\in T^*\TT^d$ and $n, m\in \NN$, by \eqref{quotient le 1} we have 
\begin{equation*}
\begin{split}
\wtp_{\bn, n+m}(x, \xi)
=&\sum_{x=T^{n+m} y} \CA_{n+m}(y)\left[ \dfrac{g\left(\CF^{n+m}_{\bn, y}(\xi) \right)}{g(\xi)}\right]^{2s} \\
=&\sum_{x=T^nz} \sum_{z=T^m y} \CA_n(z)\CA_{m}(y)\left[ \dfrac{g\left(\CF^{m}_{\bn, y}(\xi) \right)}{g(\xi)}\right]^{2s} \left[ \dfrac{g\left(\CF_{\bn, z}^n\left(\CF^{m}_{\bn, y}(\xi) \right)\right)}{g\left(\CF^{m}_{\bn, y}(\xi) \right)}\right]^{2s}\\
\le & \sum_{x=T^nz}\CA_n(z)  \sum_{z=T^m y}  \CA_{m}(y)\left[ \dfrac{g\left(\CF^{m}_{\bn, y}(\xi) \right)}{g(\xi)}\right]^{2s}\\
=& \sum_{x=T^nz} \CA_n(z) \ \wtp_{\bn, m}(z, \xi) \le \wtp_{\bn, m} \sum_{x=T^nz} \CA_n(z)= \wtp_{\bn, m}.
\end{split}
\end{equation*}
Hence, $\wtp_{\bn, n+m}=\sup\limits_{(x,\xi)\in T^*\TT^d}  \wtp_{\bn, n+m}(x, \xi)\le \wtp_{\bn, m}$. This proves that  the sequence $\{\wtp_{\bn, n}\}_{n\in\NN}$ is non-increasing.

Let $n\in \NN$ be fixed. To show that the function $\bn\mapsto \wtp_{\bn, n}$ is continuous, it suffices to show that the family 
\begin{equation*}
\{\bn\mapsto \wtp_{\bn, n}(x, \xi):\ (x,\xi)\in T^*\TT^d\} \subset C^0(\SSS^{\ell-1}) 
\end{equation*}
is uniformly bounded and equicontinuous. The uniform boundedness is already given by Sublemma~\ref{SLwtp le 1} (1). The equicontinuity follows from that
\begin{equation*}
\begin{split}
 &\sup_{(x,\xi)\in T^*\TT^d} \left|\frac{\p}{\p \bn} \wtp_{\bn, n}(x,\xi)\right|\\
=&\sup_{(x,\xi)\in T^*\TT^d} \left|\sum_{x=T^n y} \CA_n(y) \dfrac{2s \left[ g\left(\CF^n_{\bn, y}(\xi) \right)\right]^{2s-1}}{\left[g(\xi)\right]^{2s}}[Dg\left(\CF^n_{\bn, y}(\xi)\right) ]^t W_n(y) \right| \\
\le & 2|s| \sup_{(x,\xi)\in T^*\TT^d} \sum_{x=T^n y} \CA_n(y) \left[ \dfrac{g\left(\CF^n_{\bn, y}(\xi) \right)}{g(\xi)}\right]^{2s} 
\dfrac{\|Dg\|\cdot n\|DT\|\|D\tau\|}{g\left(\CF^n_{\bn, y}(\xi) \right)} \\
\le & 2n |s|  \|Dg\| \|DT\| \|D\tau\|\ \wtp_{\bn, n}\le 2n |s|  \|Dg\| \|DT\| \|D\tau\|<\infty.
\end{split}
\end{equation*}
Here we have used \eqref{fdef CFn}, \eqref{fwtp nun1}, \eqref{fdef Wnun} and the following properties of $g(\xi)$ (see (\ref{fdef g})):
$g(\xi)\ge 1$ for any $\xi\in \RR^d$ and $\|Dg\|=\sup_{\xi\in \RR^d} |Dg(\xi)|<\infty$.
\end{proof}

\begin{sublemma}\label{SLwtp <1} 
Suppose $\tau(x)$ is not an essential coboundary over $T$.
For any $\bn\in \SSS^{\ell-1}$, there is $n_0(\bn)\in \NN$ such that 
for any $(x,\xi)\in T^*\TT^d$, 
$|\CF^{n_0(\bn)}_{\bn, y}(\xi)|>2 R$ for some $y\in T^{-n_0(\bn)}(x)$.
\end{sublemma}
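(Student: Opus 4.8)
The plan is to argue by contradiction: suppose the conclusion fails for some fixed $\bn\in\SSS^{\ell-1}$, so that for every $n\in\NN$ there is a point $(x_n,\xi_n)\in T^*\TT^d$ with $|\CF^n_{\bn,y}(\xi_n)|\le 2R$ for \emph{every} $y\in T^{-n}(x_n)$. The first task is to compress this into a single trapping configuration. Writing $\zeta_k=\CF^k_{\bn,w_k}(\xi_n)$ along a backward branch $x_n=w_0,w_1,\dots$, so that $\zeta_{k+1}=\CF_{\bn,w_{k+1}}(\zeta_k)$, the one‑step estimate $|\CF_{\bn,y}(\eta)|\ge\tfrac{\ga+1}{2}|\eta|$ for $|\eta|>R$ (proved inside Sublemma~\ref{SLwtp le 1}) shows that once $|\zeta_k|$ exceeds $R$ it grows geometrically; hence $|\zeta_n|\le 2R$ forces $|\zeta_k|\le R$ for $k\le n-N_1$ (with $N_1=N_1(\ga)$) and every $|\zeta_k|$ to lie in a fixed ball $\bar B(0,R'')$, and in particular $|\xi_n|\le R$ for $n$ large. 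The compact sets $B_n=\{x:\exists\,\xi,\ |\xi|\le R,\ \text{every length-}n\text{ branch of }(x,\xi)\ \text{stays in}\ \bar B(0,R'')\}$ are then nested, and $T^{-1}(B_\infty)\subset B_\infty$ for $B_\infty=\bigcap_nB_n$; since backward orbits under the mixing map $T$ are dense, $B_\infty$ is a nonempty closed backward‑invariant set. Choosing $x_\infty\in B_\infty$ \emph{non‑periodic} and extracting a convergent subsequence of the witnesses $\xi^{(n)}$ (using continuity of the finitely many inverse branches of $T$ and of the affine maps $\CF$), I obtain a point $(x_\infty,\xi_\infty)$, $|\xi_\infty|\le R$, \emph{every} finite backward branch of which stays in $\bar B(0,R'')$.

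The heart of the argument is to turn this into a Lipschitz ``twisted invariant section''. For any infinite backward orbit $(u_l)_{l\ge0}$ of $x_\infty$ ($u_0=x_\infty$, $Tu_l=u_{l-1}$), applying $((D_{u_j}T^j)^t)^{-1}$ to $\CF^j_{\bn,u_j}(\xi_\infty)=(D_{u_j}T^j)^t\xi_\infty+\nabla\!\big(\textstyle\sum_{k<j}(\bn\cdot\tau)\circ T^k\big)(u_j)$, rewriting the resulting sum, and letting $j\to\infty$ gives
\[
\xi_\infty=-\sum_{l\ge1}\big((D_{u_l}T^l)^t\big)^{-1}\nabla(\bn\cdot\tau)(u_l),
\]
the series converging geometrically and, by what precedes, independently of the orbit. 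The same computation at a point $z\in\bigcup_jT^{-j}(x_\infty)$ shows that $\sigma(z):=\CF^j_{\bn,z}(\xi_\infty)$ (for $z\in T^{-j}(x_\infty)$) is well defined — here non‑periodicity of $x_\infty$ is used — and equals $-\sum_{l\ge1}((D_{v_l}T^l)^t)^{-1}\nabla(\bn\cdot\tau)(v_l)$ along any backward orbit $(v_l)$ of $z$. Comparing these series for nearby $z,z'$ along a common symbolic branch (so $d(v_l,v_l')\le\ga^{-l}d(z,z')$ by contraction of the inverse branches) and invoking the bounded‑distortion property of $T$ to bound $\|((D_{v_l}T^l)^t)^{-1}-((D_{v_l'}T^l)^t)^{-1}\|\lesssim\ga^{-l}d(z,z')$, I conclude that $\sigma$ is Lipschitz on the dense set $\bigcup_jT^{-j}(x_\infty)$, hence extends to a Lipschitz $\sigma:\TT^d\to\RR^d$ solving
\[
\sigma(y)=(D_yT)^t\sigma(Ty)+\nabla(\bn\cdot\tau)(y)\qquad(y\in\TT^d).
\]

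Finally I would show $\sigma$ is an exact form, which produces the coboundary. Taking exterior derivatives of $\sigma=T^*\sigma+d(\bn\cdot\tau)$ gives $d\sigma=T^*(d\sigma)$, i.e.\ $d\sigma(T^nx)=((D_xT^n)^t)^{-1}\,d\sigma(x)\,(D_xT^n)^{-1}$; since $\|(D_xT^n)^{-1}\|\le\ga^{-n}$ and $T^n$ is surjective, $\|d\sigma\|_{L^\infty}\le\ga^{-2n}\|d\sigma\|_{L^\infty}$, so $d\sigma=0$. Writing the closed Lipschitz $1$-form as $\sigma=\nabla h+a$ with $a$ a constant $1$-form and substituting back, the exact part must account for $T^*a-a$; comparing de Rham classes and using that $1$ is not an eigenvalue of the (expanding, integral) linear part of $T$ forces $a=0$ and then $h-h\circ T-\bn\cdot\tau\equiv-c$ for a constant $c$. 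Thus $\bn\cdot\tau=c+h-h\circ T$ with $h$ Lipschitz, and by Livsic regularity (as noted in the remark following Definition~\ref{Defcobd}) $h\in C^\infty$; hence $\tau$ is an essential coboundary over $T$ (with $v=\bn$), contradicting the hypothesis.

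The main obstacle is the second step: the recursion that $\sigma$ satisfies has \emph{expanding} linear part, so $\sigma$ is not obtained from a contraction and the obvious estimates on $\CF^j_{\bn,\cdot}$ degrade like $\|D^2T^j\|$; it is precisely the bounded‑distortion property of the smooth expanding map $T$ that rescues the Lipschitz bound and thereby makes the $d\sigma=0$ argument in the last step legitimate. Steps one and three are routine by comparison.
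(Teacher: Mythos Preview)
Your argument is correct and reaches the same coboundary contradiction, but the execution differs from the paper's in two places worth noting.

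First, the construction of the invariant section. You build $\sigma$ on the dense backward orbit $\bigcup_jT^{-j}(x_\infty)$ and extend by Lipschitz continuity, invoking bounded distortion for the full derivative of the inverse branches (this does hold for $C^2$ expanding maps, via the standard recursion $\|D^2(T^{-l}_\bi)\|\lesssim\gamma^{-l}$, so the step is sound). The paper instead defines the candidate $V_{\bn^*}(\bi,x)=\sum_{j\ge1}((D_{T_\bi^{-j}x}T^j)^t)^{-1}(D_{T_\bi^{-j}x}\tau)^t\bn^*$ globally from the outset --- the series converges uniformly in $C^1$ regardless of any trapping --- and then shows it is \emph{independent of the branch} $\bi$: this holds at the accumulation point $x^*$ (where it equals $-\xi^*$), propagates to every preimage by the recursion, and hence to all of $\TT^d$ by density. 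This bypasses the Lipschitz-extension work and yields smoothness of $V_{\bn^*}$ for free; your detour through a trapped set $B_\infty$ and the non-periodicity of $x_\infty$ (which, incidentally, is not really needed: if $x_\infty$ has period $p$ then $\xi_\infty$ is forced to be the unique fixed point of the expanding affine map $\CF^p_{\bn,x_\infty}$, so $\sigma$ is still well defined) is avoidable.

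Second, the proof that the resulting $1$-form is exact. Your route is cohomological: from $\sigma=T^*\sigma+d(\bn\cdot\tau)$ you get $d\sigma=T^*d\sigma$, kill $d\sigma$ by expansion, and then kill the harmonic part by observing that $T^*$ acts on $H^1(\TT^d)$ as the linearization $M^t$, with $1\notin\mathrm{Spec}(M)$ since $T$ is expanding. The paper instead shows closedness by noting each partial sum $V_{\bn^*,k}$ is already a gradient, and then computes the periods directly: averaging $V_{\bn^*}$ over all $N^M$ branches and integrating over a basic cycle, the contributions cancel because the preimage set of a point is the same whether one labels the endpoint $0$ or $1$. Your argument is slicker and more conceptual; the paper's is explicit and self-contained (no appeal to Shub's theorem or the structure of $H^1$). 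Either way one arrives at $\bn\cdot\tau=c+u-u\circ T$, contradicting the hypothesis.
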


\begin{proof} 
Let us argue by contradiction.  If this sublemma does not hold for some $\bn^*\in \SSS^{\ell-1}$, then for any $n\in \NN$, there is $(x_n, \xi_n)\in T^*\TT^d$ such that $|\CF^n_{\bn^*, y}(\xi_n)|\le 2 R$ for any $y\in T^{-n}(x_n)$. In fact, we further have 
\begin{equation}\label{CF k xi}
|\CF^k_{\bn^*, y}(\xi_n)|\le 2 R \quad\ \text{for any}\ y\in T^{-n}(x_n) \ 
\text{and} \ 0\le k\le n,
\end{equation}
since otherwise if $|\CF^k_{\bn^*, y}(\xi_n)|> 2 R$ for some $0\le k<n$, then by \eqref{CF xi}, $|\CF^n_{\bn^*, y}(\xi_n)|=|\CF^{n-k}_{\bn^*, T^ky}(\CF^k_{\bn^*, y}(\xi_n))|\ge \frac{\ga+1}{2}|\CF^k_{\bn^*, y}(\xi_n)|>2R$. Note that in particular, $|\xi_n|\le 2R$.\\

For $y\in T^{-n}(x_n)$ and $0\le k\le n$, denote 
\begin{equation}\label{fdef wW}
\wW_{\bn^*, k}(y) =[(D_yT^k)^t]^{-1} W_{\bn^*, k}(y) 
= \sum_{j=0}^{k-1} 
[(D_{T^j y} T^{k-j})^t]^{-1} [D_{T^j y}\tau]^t \bn^*.
\end{equation}
Using (\ref{fdef CFn}) and then \eqref{CF k xi}, we can write
\begin{equation*}
\left|(D_yT^k)^t\left(\xi_n + \wW_{\bn^*, k}(y) \right)\right|
=\left|(D_yT^k)^t\xi_n + W_{\bn^*, k}(y)\right|
=\left|\CF^k_{\bn^*, y}(\xi_n)\right|
\le 2R. 
\end{equation*}
By (\ref{fdef gamma}), we have 
\begin{equation}\label{Convergence ga}
\left|\xi_n + \wW_{\bn^*, k}(y) \right|\le \dfrac{2R}{\ga^k},
\qquad \text{for any} \ y\in T^{-n}(x_n), \ 0\le k\le n.
\end{equation}

We would like to rewrite $\wW_{\bn^*, k}(y)$ in terms of $x_n$ as follows. 
Suppose the degree of the expanding endomorphism $T:\TT^d\to \TT^d$ is $N$.
We denote 
\begin{equation*}
\Si_N^n=\{\bi=(i_1, i_2, \dots, i_n): \ i_j=0, 1, \dots, N-1\}, 
\ \ 1\le n\le \infty.
\end{equation*} 
Let $T_0^{-1}, T_1^{-1}, \dots, T_{N-1}^{-1}$ be the inverse branches of $T$.
Given $x\in \TT^d$ and $\bi\in \Si_N^n$,  
we denote
$T_{\bi}^{-j}x=T_{i_j}^{-1}\dots T_{i_1}^{-1}x$, 
which is well-defined whenever $0\le j\le n\le \infty$ and $j$ is finite.
We then define 
\begin{equation}\label{fdef Vk}     
V_{\bn^*, k}(\bi, x):= \sum_{j=1}^{k} 
D_x\left[\tau(T^{-j}_\bi(x)) \cdot \bn^*\right] 
= \sum_{j=1}^{k} 
[(D_{T^{-j}_\bi x} T^j)^t]^{-1} (D_{T^{-j}_\bi x} \tau)^t \bn^*.
\end{equation}
for any $1\le k\le n\le \infty$ and $k$ is finite.

Note that 
\begin{equation}\label{fVnu conv}
\sum_{j=m}^\infty 
\left| [(D_{T^{-j}_\bi x} T^j)^t]^{-1} [D_{T^{-j}_\bi x} \tau]^t \bn^* \right|
\le \|D\tau\| \sum_{j=m}^\infty \ga^{-j}
\to 0  \quad \text{as} \  m\to \infty, 
\end{equation}
and the convergence is uniform.  
That is, the sequence $\{V_{\bn^*, k}(\bi, x)\}_1^{\infty}$ is uniform Cauchy.
Hence $V_{\bn^*, \infty}(\bi, x)$ is well-defined as in \eqref{fdef Vk} 
for all $x\in\TT^d$ and $\bi\in\Si_N^\infty$.
Denote $V_{\bn^*}(\bi, x)=V_{\bn^*,\infty}(\bi, x)$.  We have 
\begin{equation}\label{fVnu conv1}
\lim_{k\to\infty}V_{\bn^*, k}(\bi, x) = V_{\bn^*}(\bi, x) 
\qquad  \text{for any} \ x\in\TT^d, \ \bi\in\Si_N^\infty,
\end{equation}
and the convergence is uniform.  
Moreover, by \eqref{fVnu conv}, for any $n>0$, 
\begin{equation}\label{fVnu conv2}
\left| V_{\bn^*, n}(\bi, x) - V_{\bn^*}(\bi, x)\right|
\le  \frac{\|D\tau\| }{\ga^{n}(1-\ga)}.
\end{equation}

Comparing \eqref{fdef wW} and \eqref{fdef Vk}, we see 
$\wW_{\bn^*, n}(y)=V_{\bn^*, n}(\bi, x)$ whenever $y=T_\bi^{-n}(x)$.
Hence by (\ref{Convergence ga}) and (\ref{fVnu conv2}), for any $n\in \NN$,
\begin{equation}\label{fVnu conv3}
\begin{split}
\left|\xi_n + V_{\bn^*}(\bi, x_n) \right|
\le &\left|\xi_n + V_{\bn^*, n}(\bi, x_n) \right|
 +  \left|V_{\bn^*, n}(\bi, x_n)-  V_{\bn^*}(\bi, x_n) \right|  \\
\le &\dfrac{2R}{\ga^n}+\dfrac{\|D\tau\|}{\ga^n(1-\ga)}.
\end{split}
\end{equation}

Since the sequence $\{(x_n, \xi_n)\}$ lies in the compact subdomain 
$\CU_{2R}:=\{(x,\xi): |\xi|\le 2R\}$ of $T^*\TT^d$,
there is an accumulation point $(x^*, \xi^*)$.
Choosing subsequences if necessary, we take $n\to\infty$ and 
obtain from \eqref{fVnu conv3} that
$V_{\bn^*}(\bi, x^*)=-\xi^*$,
regardless of the choice for $\bi\in \Si_N^\infty$.

For any $x\in \TT^d$, take $w\in\{0, 1, \dots, N-1\}$ such that $x=T_w^{-1}(Tx)$.
For any $\bi\in \Si_N^\infty$, we can directly check \eqref{fdef Vk} (when $k=\infty$)
to get
\begin{equation}\label{fVnu formula}
(D_{x}T)^t V_{\bn^*}(w\bi, Tx)
=V_{\bn^*}(\bi, x) + (D_{x}\tau)^t\bn^*.
\end{equation}

By Claim~\ref{Claim1} below, we know that 
$V_{\bn^*}(\bi, x)$ is independent of $\bi$ for any $x\in \TT^d$.  
Hence, we can define a function $V_{\bn^*}: \TT^d\to \RR^d$ by
\begin{equation*}
V_{\bn^*}(x)=V_{\bn^*}(\bi, x), \ \ \text{for any}\ \bi\in \Si_N^\infty,
\end{equation*}
and thus \eqref{fVnu formula} is rewritten as 
\begin{equation}\label{cohom derivative}
(D_xT)^tV_{\bn^*}(Tx)
=V_{\bn^*}(x)+(D_{x}\tau)^t\bn^*.
\end{equation}

By Claim~\ref{Claim2} below, which asserts that the 1-form on $\TT^d$ given by $V_{\bn^*}(x)\cdot dx$ is exact, there is a potential function 
$u$ such that $\nabla_x u=V_{\bn^*}(x)$. Alternatively, we can define the function $u:\TT^d\to \RR$ by
\begin{equation*}
u(x)=\int_{\Ga_{\bzero, x}} V_{\bn^*}(z)\cdot dz, \ \ x\in \TT^d,
\end{equation*}
where $\Ga_{\bzero, x}$ is any smooth path in $\TT^d$ from $\bzero=(0, 0, \dots, 0)$ to $x$. 

On both sides of (\ref{cohom derivative}), we replace $x$ by $tx$, take the dot product with $x$ and 
integrate with respect to $t$ from 0 to 1, then we get
\begin{equation*}
\int_{\Ga^1_{T\bzero, Tx}} V_{\bn^*}(z)\cdot dz = \int_{\Ga^0_{\bzero, x}} V_{\bn^*}(z)\cdot dz + \int_{\Ga^0_{\bzero, x}} (D_{z}\tau)^t\bn^*\cdot dz,
\end{equation*}
where $\Ga^0_{\bzero, x}:=\{tx: 0\le t\le 1\}$, and $\Ga^1_{T\bzero, Tx}:=\{T(tx): 0\le t\le 1\}$. In other words, we have 
\begin{equation*}
u(Tx)-u(T\bzero)=u(x)-u(\bzero)+\bn^*\cdot \tau(x) -\bn^*\cdot \tau(\bzero).
\end{equation*}
Note that $u(\bzero)=0$. Let $c=\bn^*\cdot \tau(\bzero)-u(T\bzero)$, then we get 
\begin{equation*}
\bn^*\cdot \tau(x)=c-u(x)+u(Tx),
\end{equation*}
which contradicts to the fact that $\tau(x)$ is not 
an essential coboundary over $T$. 
\end{proof}

\begin{remark}
We would like to mention that Faure and Weich 
constructed a function similar to \eqref{fdef Vk} in \cite{MR3719542}, Proposition 4.9, 
which plays an important role in the study of asymptotic spectral gap for open partially expanding systems.
\end{remark}

\begin{claim}\label{Claim1}
Suppose that there exists a point $(x^*, \xi^*)$ such that $V_{\bn^*}(\bi, x^*)=-\xi^*$,
regardless of the choice for $\bi\in \Si_N^\infty$.
Then for any $x\in\TT^d$, $V_{\bn^*}(\bi, x)$ is independent of $\bi$, that is,
$V_{\bn^*}(\bi, x)=V_{\bn^*}(\bi', x)$ for all $\bi, \bi'\in\Si_N^\infty$.
\end{claim}

\begin{proof}
Taking $x=T_w^{-1} x^*$ in \eqref{fVnu formula} for some $w\in \{0, 1, \dots, N-1\}$,  we get
\begin{equation*}
V_{\bn^*}(\bi, T^{-1}_w x^*) =-(D_{T^{-1}_w {x^*}}T)^t \xi^* 
-[D_{T^{-1}_w {x^*}}\tau]^t\bn^*.
\end{equation*}
The right hand side is independent of $\bi$, and hence
$V_{\bn^*}(\bi, T^{-1}_w x^*)=V_{\bn^*}(\bzero, T^{-1}_w x^*)$, where
$\bzero=(0, 0, \dots)\in \Si_N^\infty$. 
 
Inductively, one can show that $V_{\bn^*}(\bi, x)=V_{\bn^*}(\bzero, x)$ for all 
$x\in \bigcup_{n=1}^\infty T^{-n}(x^*)$ and thus for all $x\in \TT^d$, 
since the set $\bigcup_{n=1}^\infty T^{-n}x^*$ is dense in $\TT^d$.
\end{proof}

\begin{claim}\label{Claim2}
The 1-form on $\TT^d$ given by 
\begin{equation*}
V_{\bn^*}(x)\cdot dx=V_{\bn^*}^1(x) dx_1+\dots+V_{\bn^*}^d(x) dx_d
\end{equation*} 
is exact.
\end{claim}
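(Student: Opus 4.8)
The plan is to realise the $1$-form $V_{\bn^*}(x)\cdot dx$ as a uniform limit of differentials of genuine functions on $\TT^d$, so that all its periods vanish and an explicit primitive can be written down.

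First I would fix an index sequence $\bi\in\Si_N^\infty$. As recalled in the proof of the preceding sublemma, each inverse branch $T^{-1}_w$ is a globally defined $C^\infty$ self-map of $\TT^d$, so every composite $T^{-j}_\bi$ is a well-defined smooth map $\TT^d\to\TT^d$. Hence, for every $k\in\NN$,
\[
\phi_k(x):=\sum_{j=1}^{k}\tau\bigl(T^{-j}_\bi(x)\bigr)\cdot\bn^{*}
\]
is a single-valued $C^\infty$ function on $\TT^d$, and comparing with \eqref{fdef Vk} shows that $d\phi_k=V_{\bn^{*},k}(\bi,x)\cdot dx$ as $1$-forms on $\TT^d$.

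By \eqref{fVnu conv1} the forms $V_{\bn^{*},k}(\bi,\cdot)\cdot dx$ converge uniformly on $\TT^d$ to $V_{\bn^{*}}(x)\cdot dx$; in particular $V_{\bn^{*}}$ is continuous. Therefore, for any piecewise-$C^1$ loop $\gamma$ in $\TT^d$, uniform convergence lets me pass to the limit under the line integral, so
\[
\oint_\gamma V_{\bn^{*}}(z)\cdot dz=\lim_{k\to\infty}\oint_\gamma d\phi_k=0,
\]
the right-hand side vanishing because $\phi_k$ is a genuine function on $\TT^d$. Thus every period of $V_{\bn^{*}}(x)\cdot dx$ — over contractible and noncontractible loops alike — is zero. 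Consequently $u(x):=\int_{\Ga_{\bzero,x}}V_{\bn^{*}}(z)\cdot dz$ does not depend on the chosen path from $\bzero$ to $x$, hence defines a function $u\colon\TT^d\to\RR$, and the fundamental theorem of calculus applied along coordinate segments, together with the continuity of $V_{\bn^{*}}$, gives $\nabla u=V_{\bn^{*}}$. Therefore $V_{\bn^{*}}(x)\cdot dx=du$ is exact.

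There is essentially no serious obstacle here: once one observes that each partial sum $V_{\bn^{*},k}(\bi,\cdot)\cdot dx$ is the differential of a well-defined function on $\TT^d$, the conclusion is forced by the uniform convergence already established in \eqref{fVnu conv1}; the only point deserving a word of justification is the interchange of limit and loop integral, which is legitimate by that uniformity. An alternative, slightly less elementary route would instead use \eqref{cohom derivative} to see that $V_{\bn^{*}}(x)\cdot dx$ is closed and that its de Rham class is fixed by $T^{*}$ on $H^1(\TT^d;\RR)$; since $T$ is expanding, $T^{*}$ is conjugate to the transpose of an expanding integer matrix and so has no nonzero fixed vector, forcing the class, and hence the form up to an exact term, to vanish.
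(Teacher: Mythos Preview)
Your main argument rests on the assertion that ``each inverse branch $T_w^{-1}$ is a globally defined $C^\infty$ self-map of $\TT^d$,'' and this is false. An expanding endomorphism $T:\TT^d\to\TT^d$ of degree $N\ge 2$ is a nontrivial self-covering of a connected space, so it admits no continuous global section: a continuous right inverse $s$ would have image both open and closed in $\TT^d$, forcing $s(\TT^d)=\TT^d$ and $N=1$. Concretely, for $T(x)=2x$ on $\TT^1$ the branch $x\mapsto x/2$ sends $0$ to $0$ but has limit $1/2$ as $x\to 1^-$, so $\tau(T_0^{-1}x)$ is not even continuous on the circle. Hence your $\phi_k$ is only a function on a fundamental domain, not on $\TT^d$; its ``differential'' has no reason to have vanishing periods over noncontractible loops, and your key step $\oint_\gamma d\phi_k=0$ fails in general. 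The paper faces exactly this obstruction, which is why it does not argue with a single $\bi$: it averages $V_{\bn^*}$ over \emph{all} $\bi\in\Si_N^M$, so that the boundary terms in the period computation become sums over the full preimage set $T^{-j}(x)$, which is genuinely well defined on $\TT^d$ and gives the needed cancellation. The local identity $V_{\bn^*,k}(\bi,x)\cdot dx = d\bigl(\sum_{j\le k}\tau(T_\bi^{-j}x)\cdot\bn^*\bigr)$ is used in the paper only to obtain closedness, which is a local statement.

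Your alternative route at the end, by contrast, is correct and rather cleaner than the paper's computation. Once closedness is established (e.g.\ by the paper's uniform-convergence-of-derivatives argument), equation \eqref{cohom derivative} reads $T^*(V_{\bn^*}\cdot dx)=V_{\bn^*}\cdot dx + d(\tau\cdot\bn^*)$, so the de Rham class is fixed by $T^*$; since an expanding endomorphism of $\TT^d$ has linearization an integer matrix with all eigenvalues of modulus greater than one, $T^*-\Id$ is invertible on $H^1(\TT^d;\RR)$ and the class must vanish. This replaces the paper's explicit period calculation with a one-line cohomological argument, at the cost of invoking the (standard) fact that the linearization of an expanding torus map is itself expanding.
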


\begin{proof}
We first show that $V_{\bn^*}(x)\cdot dx$ is a closed 1-form, which is equivalent to showing that for any $x\in \TT^d$,
\begin{equation}\label{proof closed}
\frac{\p}{\p x_i} V_{\bn^*}^j(x)=\frac{\p}{\p x_j} V_{\bn^*}^i(x), \ \ 1\le i\le j\le d.
\end{equation}
Indeed, by \eqref{fVnu conv1} and Claim 1, $V_{\bn^*, k}^j(\bi, x)$ converges uniformly to $V_{\bn^*}^j(\bi, x)=V_{\bn^*}^j(x)$ as $k\to\infty$.
By similar calculation as in \eqref{fVnu conv}, we have that $\frac{\p}{\p x_i} V_{\bn^*, k}^j(\bi, x)$ converges uniformly as $k\to\infty$, 
and hence $\frac{\p}{\p x_i} V_{\bn^*}^j(x)=\lim\limits_{k\to \infty} \frac{\p}{\p x_i} V_{\bn^*, k}^j(\bi, x)$.
We see from \eqref{fdef Vk} that for each $k\in \NN$ and any $\bi\in\Si^\infty_N$,
the 1-form $V_{\bn^*, k}(\bi, x)\cdot dx=d\left(\sum_{j=1}^k \tau(T^{-j}_\bi(x))\cdot \bn^*\right)$ is exact and hence closed. Thus, 
\begin{equation*}
\frac{\p}{\p x_i} V_{\bn^*, k}^j(\bi, x)=\frac{\p}{\p x_j} V_{\bn^*, k}^i(\bi, x), \ \ 1\le i\le j\le d,
\end{equation*}
from which \eqref{proof closed} follows by taking $k\to\infty$.\\

Now we show that $V_{\bn^*}(x)\cdot dx$ is exact.
Since $V_{\bn^*}(x)\cdot dx$ is closed, it is sufficient to prove that
for any $x=(x_1, x_2, \dots, x_d)\in \TT^d$ and $1\le k\le d$,
\begin{equation}\label{cohom integral}
\int_0^1 V^k_{\bn^*}(x_1, \dots, x_{k-1}, t, x_{k+1}, \dots, x_d)\ dt=0.
\end{equation}
To see this, 
by (\ref{fdef Vk}) and Claim~\ref{Claim1}, we rewrite for arbitrary $M\in \NN$, 
\begin{align*}
&V_{\bn^*}(x) 
= \frac{1}{N^{M}} \sum_{\bi\in \Si_N^M} V_{\bn^*}(\bi 0^\infty, x) \\
=& \frac{1}{N^{M}} \sum_{\bi\in \Si_N^M} V_{\bn^*, M}(\bi 0^\infty, x) 
+ \frac{1}{N^{M}} 
  \sum_{\bi\in \Si_N^M} [V_{\bn^*}(\bi 0^\infty, x)-V_{\bn^*, M}(\bi 0^\infty, x)]\\
=& \frac{1}{N^{M}} \sum_{j=1}^M \sum_{\bi\in \Si_N^j} 
 D_x\left[ \tau(T^{-1}_{i_j}\dots T^{-1}_{i_1}(x))\cdot \bn^* \right] 
 + \frac{1}{N^{M}} 
  \sum_{\bi\in \Si_N^M} [V_{\bn^*}(\bi 0^\infty, x)-V_{\bn^*, M}(\bi 0^\infty, x)] \\
=&: I_{\bn^*}(x) + J_{\bn^*}(x).
\end{align*}
Here we denote 
$\bi 0^\infty=(i_1, i_2, \dots, i_n, 0, 0, \dots)\in \Si_N^\infty$
for any $\bi=(i_1, i_2, \dots, i_n)\in \Si_N^M$.

On one hand, let $I_{\bn^*}^k$ be the $k$-th component of $I_{\bn^*}$, then
\begin{align*}
  & \int_0^1 I_{\bn^*}^k(x_1, \dots, x_{k-1}, t, x_{k+1}, \dots, x_d) dt \\
=& \frac{1}{N^{M}} \sum_{j=1}^M   \sum_{\bi\in \Si_N^j} 
\int_0^1 \frac{\p}{\p x_k} 
\left[ 
\tau(T^{-1}_{i_j}\dots T^{-1}_{i_1}(x_1, \dots, x_{k-1}, t, x_{k+1}, \dots, x_d ))\cdot \bn^*
\right] dt \\
=&   \frac{1}{N^{M}} \sum_{j=1}^M 
\ \bn^*\cdot \sum_{\bi\in \Si_N^j} \Bigl[
\tau(T^{-1}_{i_j}\dots T^{-1}_{i_1}(x_1, \dots, x_{k-1}, 
          1, x_{k+1}, \dots, x_d )) \\
&\qquad\qquad\qquad\quad
-\tau(T^{-1}_{i_j}\dots T^{-1}_{i_1}(x_1, \dots, x_{k-1}, 
      0, x_{k+1}, \dots, x_d ))
     \Bigr] \\
=&   \frac{1}{N^{M}} \sum_{j=1}^M 
\ \bn^*\cdot \Bigl[  \sum_{\bi\in \Si_N^j} 
\tau(T^{-1}_{i_j}\dots T^{-1}_{i_1}(x_1, \dots, x_{k-1}, 
          1, x_{k+1}, \dots, x_d )) \\
&\qquad\qquad\quad 
-  \sum_{\bi\in \Si_N^j} \tau(T^{-1}_{i_j}\dots T^{-1}_{i_1}(x_1, \dots, x_{k-1}, 
      0, x_{k+1}, \dots, x_d ))
     \Bigr]  =0.    
\end{align*}

The last term must vanish since $\{T^{-1}_{i_j} \dots T^{-1}_{i_1}(x_1, \dots, x_{k-1}, 0, x_{k+1}, \dots, x_d ):  \bi\in \Si_N^j\}$
and $\{T^{-1}_{i_j}\dots T^{-1}_{i_1}(x_1, \dots, x_{k-1}, 1, x_{k+1}, \dots, x_d ): \bi\in \Si_N^j\}$ are just two representations for
the set of all $j$-th pre-images of the point
$(x_1, \dots, x_{k-1}, 0, x_{k+1}, \dots, x_d )=(x_1, \dots, x_{k-1}, 1, x_{k+1}, \dots, x_d )$ in $\TT^d$.

On the other hand, by \eqref{fVnu conv} and \eqref{fVnu conv1}, the convergence 
$V_{\bn^*, M}(\bi 0^\infty, x)\to V_{\bn^*}(\bi 0^\infty, x)$ is uniform in $\bi$ and $x$
as $M\to \infty$.
By choosing $M$ large enough, the integral of the $k$-th component of $J_{\bn^*}(x_1, \dots, x_{k-1}, t, x_{k+1}, \dots, x_d)$ with respect to $t$
from 0 to 1 is arbitrary small and hence 0. It follows that \eqref{cohom integral} holds.
\end{proof}

\bibliography{decaybib}{}

\begin{thebibliography}{10}

\bibitem{MR2924730}
Jean-Fran{\c{c}}ois Arnoldi.
\newblock Fractal {W}eyl law for skew extensions of expanding maps.
\newblock {\em Nonlinearity}, 25(6):1671--1693, 2012.

\bibitem{AFW13}
Jean~Fran{\c{c}}ois Arnoldi, Fr\'ed\'eric Faure, and Tobias Weich.
\newblock Asymptotic spectral gap and {W}eyl law for {R}uelle resonances of
  open partially expanding maps.
\newblock {\em Ergodic Theory Dynam. Systems}, 37(1):1--58, 2017.

\bibitem{MR2264836}
Artur Avila, S{\'e}bastien Gou{\"e}zel, and Jean-Christophe Yoccoz.
\newblock Exponential mixing for the {T}eichm\"uller flow.
\newblock {\em Publ. Math. Inst. Hautes \'Etudes Sci.}, (104):143--211, 2006.

\bibitem{MR3742756}
Viviane Baladi, Mark~F. Demers, and Carlangelo Liverani.
\newblock Exponential decay of correlations for finite horizon {S}inai billiard
  flows.
\newblock {\em Invent. Math.}, 211(1):39--177, 2018.

\bibitem{MR2643889}
Viviane Baladi and S\'ebastien Gou\"ezel.
\newblock Banach spaces for piecewise cone-hyperbolic maps.
\newblock {\em J. Mod. Dyn.}, 4(1):91--137, 2010.

\bibitem{MR2964773}
Viviane Baladi and Carlangelo Liverani.
\newblock Exponential decay of correlations for piecewise cone hyperbolic
  contact flows.
\newblock {\em Comm. Math. Phys.}, 314(3):689--773, 2012.

\bibitem{MR2313087}
Viviane Baladi and Masato Tsujii.
\newblock Anisotropic {H}\"older and {S}obolev spaces for hyperbolic
  diffeomorphisms.
\newblock {\em Ann. Inst. Fourier (Grenoble)}, 57(1):127--154, 2007.

\bibitem{MR2427585}
Viviane Baladi and Masato Tsujii.
\newblock Spectra of differentiable hyperbolic maps.
\newblock In {\em Traces in number theory, geometry and quantum fields},
  Aspects Math., E38, pages 1--21. Friedr. Vieweg, Wiesbaden, 2008.

\bibitem{MR2113938}
Viviane Baladi and Brigitte Vall{\'e}e.
\newblock Exponential decay of correlations for surface semi-flows without
  finite {M}arkov partitions.
\newblock {\em Proc. Amer. Math. Soc.}, 133(3):865--874 (electronic), 2005.

\bibitem{MR1717580}
Keith Burns and Amie Wilkinson.
\newblock Stable ergodicity of skew products.
\newblock {\em Ann. Sci. \'Ecole Norm. Sup. (4)}, 32(6):859--889, 1999.

\bibitem{Butterley-Eslami}
Oliver Butterley and Peyman Eslami.
\newblock Exponential mixing for skew products with discontinuities.
\newblock {\em Trans. Amer. Math. Soc.}, 369(2):783--803, 2017.

\bibitem{MR1626749}
Dmitry Dolgopyat.
\newblock On decay of correlations in anosov flows.
\newblock {\em Ann. of Math. (2)}, 147:357--390, 1998.

\bibitem{MR1919377}
Dmitry Dolgopyat.
\newblock On mixing properties of compact group extensions of hyperbolic
  systems.
\newblock {\em Israel J. Math.}, 130:157--205, 2002.

\bibitem{DyatGu16}
Semyon Dyatlov and Colin Guillarmou.
\newblock Pollicott-{R}uelle resonances for open systems.
\newblock {\em Ann. Henri Poincar\'{e}}, 17(11):3089--3146, 2016.

\bibitem{DyatZw16}
Semyon Dyatlov and Maciej Zworski.
\newblock Dynamical zeta functions for {A}nosov flows via microlocal analysis.
\newblock {\em Ann. Sci. \'{E}c. Norm. Sup\'{e}r. (4)}, 49(3):543--577, 2016.

\bibitem{MR872855}
Yu.~V. Egorov.
\newblock {\em Linear differential equations of principal type}.
\newblock Contemporary Soviet Mathematics. Consultants Bureau, New York, 1986.
\newblock Translated from the Russian by Dang Prem Kumar.

\bibitem{MR2285729}
Fr{\'e}d{\'e}ric Faure.
\newblock Prequantum chaos: resonances of the prequantum cat map.
\newblock {\em J. Mod. Dyn.}, 1(2):255--285, 2007.

\bibitem{MR2785978}
Fr{\'e}d{\'e}ric Faure.
\newblock Semiclassical origin of the spectral gap for transfer operators of a
  partially expanding map.
\newblock {\em Nonlinearity}, 24(5):1473--1498, 2011.

\bibitem{MR2229997}
Fr{\'e}d{\'e}ric Faure and Nicolas Roy.
\newblock Ruelle-{P}ollicott resonances for real analytic hyperbolic maps.
\newblock {\em Nonlinearity}, 19(6):1233--1252, 2006.

\bibitem{MR2461513}
Fr{\'e}d{\'e}ric Faure, Nicolas Roy, and Johannes Sj{\"o}strand.
\newblock Semi-classical approach for {A}nosov diffeomorphisms and {R}uelle
  resonances.
\newblock {\em Open Math. J.}, 1:35--81, 2008.

\bibitem{MR3072166}
Fr\'ed\'eric Faure and Masato Tsujii.
\newblock Band structure of the {R}uelle spectrum of contact {A}nosov flows.
\newblock {\em C. R. Math. Acad. Sci. Paris}, 351(9-10):385--391, 2013.

\bibitem{MR3729047}
Fr\'ed\'eric Faure and Masato Tsujii.
\newblock Resonances for geodesic flows on negatively curved manifolds.
\newblock In {\em Proceedings of the {I}nternational {C}ongress of
  {M}athematicians---{S}eoul 2014. {V}ol. {III}}, pages 683--697. Kyung Moon
  Sa, Seoul, 2014.

\bibitem{MR3330427}
Fr\'ed\'eric Faure and Masato Tsujii.
\newblock Semiclassical approach for the {R}uelle-{P}ollicott spectrum of
  hyperbolic dynamics.
\newblock In {\em Analytic and probabilistic approaches to dynamics in negative
  curvature}, volume~9 of {\em Springer INdAM Ser.}, pages 65--135. Springer,
  Cham, 2014.

\bibitem{MR3719542}
Fr\'{e}d\'{e}ric Faure and Tobias Weich.
\newblock Global normal form and asymptotic spectral gap for open partially
  expanding maps.
\newblock {\em Comm. Math. Phys.}, 356(3):755--822, 2017.

\bibitem{MR2129109}
Michael Field, Ian Melbourne, and Andrei T{\"o}r{\"o}k.
\newblock Stable ergodicity for smooth compact {L}ie group extensions of
  hyperbolic basic sets.
\newblock {\em Ergodic Theory Dynam. Systems}, 25(2):517--551, 2005.

\bibitem{MR1644099}
Michael Field and William Parry.
\newblock Stable ergodicity of skew extensions by compact {L}ie groups.
\newblock {\em Topology}, 38(1):167--187, 1999.

\bibitem{BonWeich18}
Yannick Guedes~Bonthonneau and Tobias Weich.
\newblock Pollicott-{R}uelle resonances for manifolds with hyperbolic cusps.
\newblock {\em preprint}, 2018.

\bibitem{MR1891682}
Oliver Jenkinson.
\newblock Smooth cocycle rigidity for expanding maps, and an application to
  {M}ostow rigidity.
\newblock {\em Math. Proc. Cambridge Philos. Soc.}, 132(3):439--452, 2002.

\bibitem{MR1335452}
Tosio Kato.
\newblock {\em Perturbation theory for linear operators}.
\newblock Classics in Mathematics. Springer-Verlag, Berlin, 1995.
\newblock Reprint of the 1980 edition.

\bibitem{MR2113022}
Carlangelo Liverani.
\newblock On contact {A}nosov flows.
\newblock {\em Ann. of Math. (2)}, 159(3):1275--1312, 2004.

\bibitem{MR1872698}
Andr{\'e} Martinez.
\newblock {\em An introduction to semiclassical and microlocal analysis}.
\newblock Universitext. Springer-Verlag, New York, 2002.

\bibitem{NTW15}
Yushi Nakano, Masato Tsujii, and Jens Wittsten.
\newblock The partial captivity condition for {$\rm U(1)$} extensions of
  expanding maps on the circle.
\newblock {\em Nonlinearity}, 29(7):1917--1925, 2016.

\bibitem{MR1632190}
W.~Parry and M.~Pollicott.
\newblock Stability of mixing for toral extensions of hyperbolic systems.
\newblock {\em Tr. Mat. Inst. Steklova}, 216(Din. Sist. i Smezhnye
  Vopr.):354--363, 1997.

\bibitem{MR758899}
Mark Pollicott.
\newblock A complex {R}uelle-{P}erron-{F}robenius theorem and two
  counterexamples.
\newblock {\em Ergodic Theory Dynam. Systems}, 4(1):135--146, 1984.

\bibitem{MR1685406}
Mark Pollicott.
\newblock On the mixing of {A}xiom {A} attracting flows and a conjecture of
  {R}uelle.
\newblock {\em Ergodic Theory Dynam. Systems}, 19(2):535--548, 1999.

\bibitem{MR2129258}
David Ruelle.
\newblock {\em Thermodynamic formalism}.
\newblock Cambridge Mathematical Library. Cambridge University Press,
  Cambridge, second edition, 2004.
\newblock The mathematical structures of equilibrium statistical mechanics.

\bibitem{MR2243967}
Michael Ruzhansky and Mitsuru Sugimoto.
\newblock Global calculus of {F}ourier integral operators, weighted estimates,
  and applications to global analysis of hyperbolic equations.
\newblock In {\em Pseudo-differential operators and related topics}, volume 164
  of {\em Oper. Theory Adv. Appl.}, pages 65--78. Birkh\"{a}user, Basel, 2006.

\bibitem{MR2567604}
Michael Ruzhansky and Ville Turunen.
\newblock {\em Pseudo-differential operators and symmetries}, volume~2 of {\em
  Pseudo-Differential Operators. Theory and Applications}.
\newblock Birkh\"auser Verlag, Basel, 2010.
\newblock Background analysis and advanced topics.

\bibitem{MR2744150}
Michael~E. Taylor.
\newblock {\em Partial differential equations {I}. {B}asic theory}, volume 115
  of {\em Applied Mathematical Sciences}.
\newblock Springer, New York, second edition, 2011.

\bibitem{MR2743652}
Michael~E. Taylor.
\newblock {\em Partial differential equations {II}. {Q}ualitative studies of
  linear equations}, volume 116 of {\em Applied Mathematical Sciences}.
\newblock Springer, New York, second edition, 2011.

\bibitem{MR2380311}
Masato Tsujii.
\newblock Decay of correlations in suspension semi-flows of angle-multiplying
  maps.
\newblock {\em Ergodic Theory Dynam. Systems}, 28(1):291--317, 2008.

\bibitem{MR2652469}
Masato Tsujii.
\newblock Quasi-compactness of transfer operators for contact {A}nosov flows.
\newblock {\em Nonlinearity}, 23(7):1495--1545, 2010.

\bibitem{MR3772032}
Zhiyuan Zhang.
\newblock On the smooth dependence of {SRB} measures for partially hyperbolic
  systems.
\newblock {\em Comm. Math. Phys.}, 358(1):45--79, 2018.

\bibitem{MR2952218}
Maciej Zworski.
\newblock {\em Semiclassical analysis}, volume 138 of {\em Graduate Studies in
  Mathematics}.
\newblock American Mathematical Society, Providence, RI, 2012.

\end{thebibliography}
\bibliographystyle{plain}

\end{document}